
\documentclass[12pt,a4paper,psamsfonts]{amsart}

\usepackage{fancyhdr}
\usepackage{appendix}
\usepackage{amssymb,amscd,amsxtra,calc}
\usepackage{mathrsfs}
\usepackage{cmmib57}
\usepackage{multirow}
\usepackage[all]{xy}
\usepackage{longtable}
\usepackage[colorlinks,linkcolor=blue,anchorcolor=blue,citecolor=green]{hyperref}
\setlength{\topmargin}{0cm}
\setlength{\oddsidemargin}{0cm}
\setlength{\evensidemargin}{0cm}
\setlength{\marginparwidth}{0cm}
\setlength{\marginparsep}{0cm}

\setlength{\textheight}{\paperheight - 2in -35pt}
\setlength{\textwidth}{\paperwidth - 2in}
\setlength{\headheight}{12.5pt}
\setlength{\headsep}{25pt}
\setlength{\footskip}{30pt}

\pagestyle{headings}

\theoremstyle{plain}
    \newtheorem{thm}{Theorem}[section]
    
    \newtheorem{claim}[thm]{Claim}
     \newtheorem{conjecture}[thm]{Conjecture}
    \newtheorem{corollary}[thm]{Corollary}
    
    \newtheorem{lemma}[thm]{Lemma}
    \newtheorem{proposition}[thm]{Proposition}
    \newtheorem{question}[thm]{Question}
    \newtheorem{theorem}[thm]{Theorem}

\theoremstyle{definition}
    \newtheorem{definition}[thm]{Definition}
    
    \newtheorem*{notation*}{Notation and Terminology}
    \newtheorem{remark}[thm]{Remark}
\theoremstyle{remark}

\newcommand{\C}{\mathbb{C}}

\newcommand{\PP}{\mathbb{P}}

\newcommand{\Q}{\mathbb{Q}}

\newcommand{\R}{\mathbb{R}}

\newcommand{\Z}{\mathbb{Z}}

\newcommand{\EFF}{\operatorname{E}}

\newcommand{\id}{\operatorname{id}}

\newcommand{\Ker}{\operatorname{Ker}}
\newcommand{\NE}{\overline{\operatorname{NE}}}
\newcommand{\Nef}{\operatorname{Nef}}

\newcommand{\NS}{\operatorname{NS}}

\newcommand{\PE}{\operatorname{PE}}

\newcommand{\Per}{\operatorname{Per}}

\newcommand{\rank}{\operatorname{rank}}
\newcommand{\SEnd}{\operatorname{SEnd}}

\newcommand{\Supp}{\operatorname{Supp}}

\newcommand{\Codim}{\operatorname{codim}}

\newcommand{\N}{\operatorname{N}}

\newcommand{\Alb}{\operatorname{Alb}}

\newcommand{\Pic}{\operatorname{Pic}}

\begin{document}

\title[Kawaguchi-Silverman conjecture]
{Kawaguchi-Silverman conjecture for certain surjective endomorphisms}

\author{Sheng Meng, De-Qi Zhang}

\address
{
\textsc{School of Mathematical Sciences, Shanghai Key Laboratory of PMMP} \endgraf
\textsc{East China Normal University, 500 Dongchuan Road, Shanghai 200241, People's Republic of China}\endgraf
\textsc{Max-Planck-Institut f\"ur Mathematik, Vivatsgasse 7, Bonn 53111, Germany} \endgraf
}
\email{smeng@math.ecnu.edu.cn}

\address
{
\textsc{Department of Mathematics} \endgraf
\textsc{National University of Singapore,
Singapore 119076, Republic of Singapore
}}
\email{matzdq@nus.edu.sg}

\begin{abstract}
We prove the Kawaguchi-Silverman conjecture (KSC), about the equality of arithmetic degree and
dynamical degree, for every surjective endomorphism of any (possibly singular) projective surface.
In high dimensions, we show that KSC holds for {\it every} surjective endomorphism of any $\Q$-factorial Kawamata log terminal projective variety admitting one int-amplified endomorphism, provided that KSC holds for any surjective endomorphism with the ramification divisor being totally invariant and irreducible.
In particular, we show that KSC holds for {\it every} surjective endomorphism of any rationally connected smooth projective threefold admitting one int-amplified endomorphism.
The main ingredients are the equivariant minimal model program, the effectiveness of the anti-canonical divisor and a characterization of toric pairs.
\end{abstract}
\makeatletter
\@namedef{subjclassname@2020}{\textup{2020} Mathematics Subject Classification}
\makeatother
\subjclass[2020]{
37P55, 
14E30,   
08A35.  
}

\keywords{Kawaguchi-Silverman conjecture, equivariant minimal model program, int-amplified endomorphism, arithmetic degree, dynamical degree, toric variety}

\maketitle
\tableofcontents

\section{Introduction}

We work over an algebraically closed field $k$ of characteristic zero.
Let $f: X \to X$ be a surjective endomorphism of a projective variety $X$ over
$\overline{\mathbb{Q}}$.
There are two fundamental dynamical invariants from the aspects of topology and arithmetic.
The {\it first dynamical degree} $\delta_f$ is defined as
$$\delta_f:=\lim\limits_{m\to+\infty}((f^m)^*H\cdot H^{\dim(X)-1})^{1/m},$$
where $H$ is an ample divisor of $X$.
Such limit, independent of the choice of $H$, exists and equals to the spectral radius of $f^*|_{\NS(X)\otimes_{\Z} \C}$; see Definition \ref{def-d1}.
The {\it arithmetic degree} is defined as a function
$$\alpha_f(x) := \lim\limits_{m\to+\infty} \max\{1, h_H (f^m(x))\}^{1/m},$$
where $h_H$ is a Weil height function associated with an ample divisor $H$ of $X$; see Definition \ref{def-a-deg}.
The {\it Kawaguchi - Silverman Conjecture} ({\it KSC} for short, see \cite{KS-R}) asserts that $\alpha_f$ is well-defined, i.e., the limit exists, and $\alpha_f(x)=\delta_f$ for any point $x$ with Zariski dense $f$-orbit.

\begin{conjecture}\label{conj-KSC} (Kawaguchi-Silverman Conjecture = KSC)
Let $f:X\to X$ be a surjective endomorphism of a projective variety $X$ over $\overline{\mathbb{Q}}$.
Then the following hold.
\begin{itemize}
\item[(1)] The limit defining arithmetic degree $\alpha_f(x)$ exists for any $x \in X(\overline{\mathbb{Q}})$.
\item[(2)] If the (forward) orbit $O_f(x) = \{f^n(x)\, |\, n\ge 0\}$ is Zariski dense
in $X$, then the arithmetic degree of $x$ is equal to the dynamical degree of $f$, i.e.,
$\alpha_f(x) = \delta_f$.
\end{itemize}
\end{conjecture}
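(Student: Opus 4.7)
The inequality $\alpha_f(x) \le \delta_f$ (when the limit exists) and existence of the limit in part (1) are known in many situations through work of Kawaguchi--Silverman and Matsuzawa, so the essential content of KSC is the reverse inequality $\alpha_f(x) \ge \delta_f$ for $x$ with Zariski dense orbit. My plan is dimensional induction via the equivariant minimal model program advertised in the abstract: after replacing $f$ by a sufficiently divisible iterate, every contraction or flip in an MMP run on $X$ becomes $f$-equivariant, so one replaces $X$ by either an $f$-equivariant Mori fiber space or an $f$-equivariant minimal model of lower complexity and argues inductively.

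For the surface case I would use the Enriques--Kodaira classification of minimal surfaces admitting a non-trivial surjective endomorphism. On each resulting class --- abelian, hyperelliptic, toric, ruled, and rational with a fibration --- one either invokes a known special case (Silverman's theorem for abelian varieties, or the polarized case of Kawaguchi--Silverman) or uses a fibration argument that derives KSC on $X$ from KSC on the base curve together with KSC on the general fiber. Since arithmetic and dynamical degrees are birational invariants for endomorphisms in a suitable sense, one can afterwards pass back to the possibly singular surface.

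For the higher-dimensional conditional statement, the equivariant MMP splits the argument into two branches. If $X \to Y$ is a Mori fiber space, a Zariski dense orbit either projects to a Zariski dense orbit on $Y$ (apply induction on $\dim Y$) or lives in a fiber, where the general fiber is a Fano variety of lower dimension on which the induced endomorphism remains int-amplified. If the MMP terminates at an $f$-equivariant minimal model, the existence of an int-amplified endomorphism combined with effectiveness of $-K_X$ and the characterization of toric pairs should force the pair $(X, R_f)$ to have enough structure that it reduces to the hypothesized base case: $R_f$ totally invariant and irreducible. For rationally connected smooth threefolds, rational connectedness rules out abelian or irregular outputs of the MMP, and the toric-pair characterization together with effective anti-canonicals should let one verify the totally-invariant-irreducible case by an explicit computation on a toric model, upgrading the conditional result to an unconditional one.

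The main obstacle I expect is the totally-invariant-and-irreducible base case itself: here there is no further MMP step to exploit and polarization arguments fail, so one must leverage the rigidity coming from $R_f$ being totally invariant to force a toric-like structure on $(X, R_f)$ and then compute orbit growth against an $f$-equivariant ample class. A secondary difficulty is ensuring that a single iterate of $f$ renders every MMP step equivariant simultaneously, which typically requires finiteness of $f$-periodic extremal rays and behaves subtly in the presence of klt singularities.
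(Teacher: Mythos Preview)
The statement you are addressing is Conjecture~\ref{conj-KSC}, which the paper does \emph{not} prove in general; it remains open. The paper establishes only the special cases recorded in Theorems~\ref{main-thm-surface}, \ref{main-thm-tir}, and~\ref{main-thm-src3}. Your proposal reads as a sketch of those partial results rather than of the full conjecture, so I compare on that basis; but you should be aware that no argument in the paper (or in yours) closes the general case.

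For surfaces, your plan to resolve singularities and invoke the Enriques--Kodaira classification fails at the first step: as the paper stresses, for a non-isomorphic endomorphism of a singular surface there is in general \emph{no} $f$-equivariant resolution, so one cannot transport the problem to the smooth category. The paper works directly on the singular lc surface via an $f$-equivariant MMP (Section~\ref{sec-sur-emmp}) and isolates the hard case as a Fano contraction $\pi:X\to Y$ to a curve with $\delta_f>\delta_{f|_Y}$; the genuine content is Theorem~\ref{thm-surf-rho2}, which shows that $X$ is then equivariantly finite over $\mathbb{P}^1\times Y$. None of this appears in your outline.

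For the higher-dimensional reduction, your dichotomy ``the orbit projects to a dense orbit on $Y$, or it lives in a fiber'' is not the relevant one: a Zariski dense orbit always projects to a dense orbit, but KSC for $f|_Y$ implies KSC for $f$ only when $\delta_f=\delta_{f|_Y}$ (Lemma~\ref{lem-ksc-iff}). The paper's actual work is to handle the gap case $\delta_f>\delta_{f|_Y}$ at a Fano contraction: it proves $-K_X$ is effective (Theorem~\ref{thm-kappa}), disposes of $\kappa(X,-K_X)>0$ via the anti-Iitaka fibration and Chow reduction (Proposition~\ref{main-thm-caseB}, Theorem~\ref{thm-caseB}), and is left with $\kappa(X,-K_X)=0$, which is exactly Case~TIR and is \emph{not} resolved in general. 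For rationally connected smooth threefolds the paper does not ``verify the totally-invariant-irreducible case by explicit computation''; rather, it shows that Case~TIR$_3$ \emph{cannot occur} along the MMP, because the variety would then be toric (Proposition~\ref{prop-toric-iamp}), contradicting $\kappa(-K_X)=0$. Your proposal misses both the $\delta_f$ versus $\delta_{f|_Y}$ dichotomy and the mechanism by which the threefold case is closed.
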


\begin{remark}
The original conjecture is formulated for dominant rational self-maps of smooth projective varieties.
In our setting, Conjecture \ref{conj-KSC} (1) has been proved by Kawaguchi and Silverman themselves (cf.~\cite{KS-TAMS});
more precisely, $\alpha_f(x)$ is either $1$ or the absolute value of an eigenvalue of $f^*|_{\N^1(X)}$ for any $x\in X(\overline{\mathbb{Q}})$.
In particular, $\alpha_f(x)\le \delta_f$.

Conjecture \ref{conj-KSC} (2) has been proved
at least in the following cases.

\begin{itemize}
\item[(i)]
$f$ is polarized (\cite[Theorem 5]{KS-TAMS}).
\item[(ii)]
$X$ is a smooth projective surface and $f$ is an automorphism (\cite[Theorem 2(c)]{KS14}).
\item[(iii)]
$X$ is a smooth projective surface and $f$ is non-isomorphic (\cite[Theorem 1.3]{MSS}).
\item[(iv)]
$X$ is a Mori dream space
(eg.~of Fano type; see \cite[Theorem 4.1, Corollary 4.2]{Mat}).
\item[(v)]
$X$ is an abelian variety
(\cite[Corollary 32]{KS-TAMS}, \cite[Theorem 2]{Sil17}).
\item[(vi)] $X$ is a Hyperk\"ahler variety (\cite[Theorem 1.2]{LS}).
\item[(vii)] $X$ is a smooth projective 3-fold with 
$\kappa(X) = 0$ and $\deg f > 1$ (\cite[Prop 1.6]{LS}).
\end{itemize}
\end{remark}

Theorems \ref{main-thm-surface}, \ref{main-thm-kappa}, \ref{main-thm-tir} and \ref{main-thm-src3} below are our main results.

First, we look at a surjective endomorphism $f$ of a (possibly singular) surface
$X$.
By taking the normalization which is $f$-equivariant, we may assume $X$ is normal.
When $f$ is an automorphism, one can further take an $f$-equivariant resolution and reduce to the smooth case; see \cite[Theorem 2(c)]{KS14}.

Assume now that $f$ is non-isomorphic. Wahl \cite[Theorem 2.8]{Wah} showed that $X$ has at worst {\it log canonical} (lc) singularities.
In the smooth  surface case, KSC is confirmed by Matsuzawa, Sano and Shibata \cite[Theorem 1.3]{MSS}, by reducing the problem to three precise cases: $\mathbb{P}^1$-bundles, hyperelliptic surfaces, and surfaces of Kodaira dimension one.

However, in the singular surface case, it is in general not possible to find an $f$-equivariant resolution.
Nevertheless, we are able to run an $f$-equivariant minimal model program (MMP) after iterating $f$; see Section \ref{sec-sur-emmp}.
Our key observation is Theorem \ref{thm-surf-rho2} (see also Theorem \ref{thm-surf-nonpe}) which shows that the only troubled case of Fano contraction, involved in the  KSC, is in fact of product type; see also Theorem \ref{thm-tir-1} for a higher dimensional analogue.
Conjecture \ref{conj-KSC} is thus fully solved for surfaces in Theorem \ref{main-thm-surface}.

When $\deg (f) \ge 2$, our proof (for possibly singular surfaces) does not depend on (and in fact recover) \cite[Theorem 1.3]{MSS} (for the smooth surface case).

\begin{theorem}\label{main-thm-surface}
KSC holds for any surjective endomorphism of a projective surface.
\end{theorem}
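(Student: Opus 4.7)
The plan is to reduce Theorem \ref{main-thm-surface} to settings where KSC is already known, with the sole remaining obstruction resolved by the product-type structural result highlighted in the introduction. Since normalization $\widetilde{X}\to X$ is canonical and hence $f$-equivariant, and is finite birational, it preserves $\delta_f$ and preserves the property of having a Zariski-dense $f$-orbit; so I may assume $X$ is normal. If $f$ is an automorphism, an $f$-equivariant resolution of singularities (the minimal resolution suffices) reduces to the smooth automorphism case \cite[Thm.~2(c)]{KS14}. Henceforth I assume $\deg f\ge 2$; then by Wahl \cite[Thm.~2.8]{Wah}, $X$ has only log canonical singularities.

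After replacing $f$ by a suitable iterate, I run an $f$-equivariant MMP on $X$ in the sense of Section \ref{sec-sur-emmp}. This yields a sequence of $f$-equivariant divisorial contractions $X\to X_1\to\cdots\to Y$ terminating either with $K_Y$ nef or with an equivariant Mori fibration $Y\to Z$. Each step preserves $\delta_f$, and dense orbits push forward to dense orbits, so KSC for $(X,f)$ follows from KSC for $(Y,f|_Y)$ once one checks the corresponding height-theoretic compatibility for arithmetic degrees along the dense orbits. When $K_Y$ is nef, the classification of surfaces admitting a non-isomorphic surjective endomorphism forces $\kappa(Y)\in\{0,1\}$; in both subcases $Y$ has mild enough (in particular klt) singularities to admit an $f$-equivariant resolution, reducing KSC to the smooth surface result \cite[Thm.~1.3]{MSS}.

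The delicate case is an equivariant Mori fibration $Y\to Z$, and this is exactly where Theorem \ref{thm-surf-rho2} (together with Theorem \ref{thm-surf-nonpe}) enters: any such Fano contraction not already covered by earlier strata is forced to be of \emph{product type}, so that, up to an $f$-equivariant finite cover and a further iterate, $(Y,f)\cong (C_1\times C_2,\, g\times h)$ with $g,h$ surjective endomorphisms of curves. For a product endomorphism the dynamical degree splits as $\delta_f=\max(\delta_g,\delta_h)$, a dense $f$-orbit projects to a pair of dense orbits on the factors, and a standard product-formula argument for arithmetic degrees gives $\alpha_f(x_1,x_2)=\max(\alpha_g(x_1),\alpha_h(x_2))$; the conclusion then follows from the trivial one-dimensional KSC.

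The main obstacle is precisely this Fano-fibration step: invoking Theorem \ref{thm-surf-rho2} in sufficient generality to exclude everything except a genuine product, and then coherently relating arithmetic degrees on $X$, on the MMP output $Y$, and on the splitting cover $C_1\times C_2$. The ancillary bookkeeping --- iterating $f$ so that the equivariant MMP is available, using $\delta_{f^n}=\delta_f^n$ and $\alpha_{f^n}(x)=\alpha_f(x)^n$, and establishing functoriality of $\alpha_f$ under finite and birational morphisms --- is standard but must be assembled consistently. Once these pieces are in place, Theorem \ref{main-thm-surface} follows by combining the known cases (i)--(vii) in the Remark following Conjecture \ref{conj-KSC} with the product-type reduction provided by Theorem \ref{thm-surf-rho2}.
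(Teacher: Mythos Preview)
Your overall strategy matches the paper's, but there is a genuine gap in the $K_Y$ nef step. You write that when $K_Y$ is nef, $Y$ ``has mild enough (in particular klt) singularities to admit an $f$-equivariant resolution,'' and then you reduce to the smooth result of \cite{MSS}. This does not work: for a \emph{non-isomorphic} surjective endomorphism, there is in general no $f$-equivariant resolution of singularities. The minimal resolution is functorial for automorphisms but not for finite endomorphisms of degree $\ge 2$; the preimage of the exceptional configuration over a singular point need not match the exceptional configuration over its images. The paper states this explicitly in the Introduction (``for the singular case, it is in general not possible to find an $f$-equivariant resolution''), and it is precisely why the equivariant MMP machinery of Section~\ref{sec-sur-emmp} is developed instead.

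The paper handles the pseudo-effective case by a completely different route: Nakayama's structure theorem (Theorem~\ref{thm-surf-pe}) shows that $f$ is quasi-\'etale and that, after a quasi-\'etale Galois cover to which a power of $f$ lifts, one lands either on an abelian surface (where KSC is Silverman's \cite[Theorem~2]{Sil17}) or on $E\times T$ with $g(T)\ge 2$ (where there is no Zariski-dense orbit, so KSC is vacuous). No resolution and no appeal to \cite{MSS} is needed; indeed, the paper remarks that for $\deg f\ge 2$ the argument is independent of \cite{MSS}. Your treatment of the Fano-contraction case via Theorems~\ref{thm-surf-rho2} and \ref{thm-surf-nonpe} is essentially correct in spirit, though you should make explicit the trichotomy in Theorem~\ref{thm-surf-nonpe}: the polarized case (handled by \cite[Theorem~5]{KS-TAMS}), the case $\delta_f=\delta_{f|_Y}$ (handled by descent to the curve $Y$ via Lemma~\ref{lem-ksc-iff}(2)), and the product-type case (handled by Lemmas~\ref{lem-ksc-iff}(1) and \ref{lem-ksc-x}). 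Note also that in case~(3) of Theorem~\ref{thm-surf-nonpe} the finite morphism goes \emph{from} $X$ \emph{to} $\mathbb{P}^1\times Y$, not the other way.
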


We now look at a higher dimensional projective variety $X$.
A surjective endomorphism $\mathcal{P}:X\to X$ is said to be {\it $q$-polarized} if $\mathcal{P}^*H\sim qH$ for some ample (integral) Cartier divisor $H$ and $q>1$.
A surjective endomorphism $\mathcal{I}:X\to X$ is said to be {\it int-amplified} if $\mathcal{I}^*L-L=H$ for some ample Cartier divisors $L$ and $H$.
Every polarized endomorphism is int-amplified.
See \cite{Meng_IAMP}, \cite{MZ}, \cite{MZ_PG} and \cite{MZ_S} for properties of such $\mathcal{P}$ or $\mathcal{I}$.

Especially,
$\mathcal{I}:X\to X$ is int-amplified if and only if
every eigenvalue of
the pullback action $\mathcal{I}^*|_{\NS(X)_{\C}}$ on the complex N\'eron-Severi space is of modulus $>1$;
further,
for any given surjective endomorphisms $f, g$ of $X$, if $f$ is int-amplified
(or say polarized), then $f^N \circ g$ is also int-amplified for some large $N$ (cf.~\cite[Theorem 1.4]{Meng_IAMP}). Thus, our working condition of assuming the existence of one
int-amplified endomorphism in Question \ref{que-ksc-iamp} and Theorems \ref{main-thm-kappa}, \ref{main-thm-tir} and \ref{main-thm-src3} is quite flexible.

Let $f:X\to X$ be a (not necessarily int-amplified) surjective endomorphism.
We wish to run an MMP  $f$-equivariantly (after replacing $f$ by a positive power).
On the one hand, to run an MMP, we need to assume that $X$ has only mild singularities, eg. $\Q$-factorial {\it Kawamata log terminal} (klt) singularities; see \cite[Definition 2.34]{KM}, \cite{BCHM}.
On the other hand, for the $f$-equivariance, we need to assume that $X$ admits at least one int-amplified endomorphism; see \cite[Theorems 1.1 and 1.2]{MZ_PG}.

Therefore, in higher dimensions, we focus on the following question:

\begin{question}\label{que-ksc-iamp}
Let $X$ be a normal projective variety which has only $\Q$-factorial Kawamata log terminal (klt) singularities and admits one int-amplified endomorphism.
Does KSC hold for every surjective endomorphism of $X$?
\end{question}

In \cite[\S 5]{Mat}, Matsuzawa provided a possible solution by adding three more assumptions: the {\it anti-Kodaira dimension} $\kappa(X, -K_X)>0$, $X$ being rationally connected, and the flip termination conjecture.
The flip termination conjecture is proved when $\dim(X)\le 3$ (cf.~\cite{Mor}, \cite{Sho}). However, it remains very difficult in higher dimensions.
On the other hand, it is proved in \cite[Theorem 1.1]{CMZ} of authors' joint paper that $-K_X$ is numerically effective when $X$ admits a polarized endomorphism.
This result was further generalized by the first author to the int-amplified case \cite[Theorem 1.5]{Meng_IAMP}.
In general, a numerically effective divisor may not be effective.
Nevertheless, we are able to strengthen \cite[Theorem 1.5]{Meng_IAMP} and show below that $-K_X$ is indeed effective, or equivalently $\kappa(X,-K_X)\ge 0$.

\begin{theorem}[cf.~Theorem \ref{thm-kappa}]\label{main-thm-kappa}
	Let $X$ be a $\Q$-Goreinstein normal projective variety admitting one int-amplified endomorphism.
	Then we have:
\begin{itemize}
\item[(1)]
$-K_X\sim_{\Q} D$ ($\Q$-linear equivalence) for some effective $\Q$-Cartier divisor $D$.
\item[(2)]
Suppose further the anti-Kodaira dimension $\kappa(X, -K_X)= 0$.
Then $D$ is a reduced Weil divisor such that $g^{-1}(D)=D$ and $g|_{X \backslash D}: X \backslash D \to X \backslash D$
is quasi-\'etale, i.e., \'etale in codimension $1$,   for any surjective endomorphism $g$ of $X$.
\end{itemize}
\end{theorem}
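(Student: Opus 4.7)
By \cite[Theorem 1.5]{Meng_IAMP} we already know $-K_X$ is nef. Let $f:X\to X$ be an int-amplified endomorphism. The $\Q$-Gorenstein ramification formula gives $K_X\sim_\Q f^*K_X+R_f$ with an effective $R_f\ge 0$, equivalently
\[
f^*(-K_X)\sim_\Q (-K_X)+R_f. \qquad (\star)
\]
Iterating,
\[
(f^n)^*(-K_X)\sim_\Q (-K_X)+R_{f^n},\qquad R_{f^n}:=\sum_{i=0}^{n-1}(f^i)^*R_f\ge 0,
\]
and since every eigenvalue of $f^*$ on $\N^1(X)_\R$ has absolute value $>1$, the effective classes $R_{f^n}$ grow without bound; formally inverting $f^*-\id$ even gives $[-K_X]=\sum_{n\ge 1}(f^*)^{-n}[R_f]$ in $\N^1(X)_\R$.

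For Part (1), the plan is to upgrade ``nef'' to ``$\Q$-effective'' for $-K_X$ using $(\star)$. I would split into two cases. If $(-K_X)^{\dim X}>0$, then $-K_X$ is nef and big, so asymptotic Riemann--Roch on the $\Q$-Gorenstein projective variety $X$ yields $h^0(X,\OO_X(-mK_X))\to\infty$, giving $\Q$-effectivity at once. In the degenerate case $(-K_X)^{\dim X}=0$, the plan is to apply the $f$-equivariant MMP of \cite{MZ_PG} after replacing $f$ by a suitable iterate, descend $X$ to a Mori-fiber-space output of strictly lower relative dimension, and then use induction on $\dim X$ together with the recursion $(\star)$ to transfer $\Q$-effectivity from the base up to $X$.

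For Part (2), assume $\kappa(X,-K_X)=0$ and let $D\ge 0$ be the effective $\Q$-divisor produced in Part~(1). The $\kappa=0$ hypothesis makes effective $\Q$-representatives of $-K_X$ in its $\Q$-linear equivalence class essentially unique (if $h^0(X,\OO_X(-mK_X))=1$ then $mD$ is the unique effective member of $|{-mK_X}|$). For an arbitrary surjective endomorphism $g:X\to X$,
\[
g^*D\sim_\Q g^*(-K_X)\sim_\Q -K_X+R_g\sim_\Q D+R_g,
\]
with both sides effective. A uniqueness-plus-pushforward argument specific to the $\kappa=0$ situation then upgrades this $\Q$-linear equivalence to the divisor-level equality $g^*D=D+R_g$. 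From this equality everything else follows mechanically: $\Supp R_g\subseteq\Supp D$, so $g|_{X\setminus D}$ is quasi-\'etale; no component of $g^*D$ can lie outside $\Supp D$, so $g^{-1}(D)=D$ set-theoretically; and matching multiplicities coefficient-by-coefficient of $D=\sum a_iD_i$, using the integrality of $R_g$ together with the fact that $f^*$ permutes the $D_i$'s after iteration and scales their coefficients by eigenvalues of modulus $>1$, pins down every $a_i=1$, so $D$ is reduced.

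The hard part is Part~(1). Passing from nef to $\Q$-effective is a genuine strengthening, since nef divisors are not $\Q$-effective in general; the only extra inputs available are $(\star)$ and the int-amplified structure of $f$. The degenerate case $(-K_X)^{\dim X}=0$ is where the bulk of the work lies and requires a careful inductive reduction via the $f$-equivariant MMP of \cite{MZ_PG} that handles each type of Mori-fiber-space output uniformly and transports $\Q$-effectivity back up along the fibration.
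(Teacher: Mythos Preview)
Your Part~(2) sketch is essentially the paper's argument (the paper packages the ``uniqueness-plus-pushforward'' step as Proposition~\ref{prop-kap0-inv}, then compares $R_f=\sum(q_i-1)D_i$ with $f^*D-D=\sum a_i(q_i-1)D_i$ using $q_i>1$ to get $a_i=1$).

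Part~(1), however, has a genuine gap, and the paper's route is both different and much shorter. Two problems with your plan: first, the equivariant MMP of \cite{MZ_PG} requires klt singularities, while the theorem is stated only for $\Q$-Gorenstein $X$, so you cannot run MMP in general; second, even granting klt, it is not clear how $\Q$-effectivity of $-K_Y$ on the base of a Fano contraction $\pi:X\to Y$ lifts to $\Q$-effectivity of $-K_X$ --- you would need something like a $\pi$-effective representative in the right class, and the recursion $(\star)$ alone does not produce one. Your formal series $[-K_X]=\sum_{n\ge 1}(f^*)^{-n}[R_f]$ in $\N^1(X)_\R$ only yields pseudo-effectivity (it is a limit of effective classes), which you already knew from nefness.

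The paper avoids MMP entirely. The key observation is that your series argument can be run in $\Pic_\R(X)$ rather than $\N^1(X)_\R$, provided one knows that \emph{all} eigenvalues of $f^*$ on $\Pic_\Q(X)$ --- not just on $\NS_\Q(X)$ --- have modulus $>1$. This is Proposition~\ref{prop-iamp->1}, and its proof requires controlling the eigenvalues on $\Pic^0$ via the Albanese map (Proposition~\ref{prop-eig-sqrt}, using that int-amplified forces the Albanese morphism to be surjective). Once this is in hand, the finite-dimensional $f^*$-invariant subspace $V_f(R_f)\subseteq\Pic_\R(X)$ spanned by $\{(f^m)^*R_f\}_{m\ge 0}$ (finite-dimensional by Lemma~\ref{lem-pol-pic}) carries the $f^*$-invariant cone $\EFF_f(R_f)$ of genuinely effective divisors; since $(f^*-\id)(-K_X)=R_f\in\EFF_f(R_f)$ and $f^*-\id$ is invertible with $(f^*-\id)^{-1}$ preserving the cone (\cite[Proposition~3.2]{Meng_IAMP}), one gets $-K_X\in\EFF_f(R_f)$ directly. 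This gives $\Q$-linear equivalence to an effective divisor in one stroke, with no case split on bigness and no induction.
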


In view of Theorem \ref{main-thm-kappa}, we are led to the case $\kappa(X,-K_X)=0$, or $\kappa(X,-K_X)>0$.
In \cite[Proposition 3.6]{Mat} (cf.~Proposition \ref{prop-ksc-kappa(D)>=0}), Matsuzawa showed that KSC holds for $f$ if $f^*D\sim \delta_f D$ with $\kappa(X,D)>0$.
In general, one cannot weaken the linear equivalence assumption here to numerical equivalence if $H^1(X,\mathcal{O}_X)\neq 0$.
However, we are able to prove the following, by using the {\it anti-Kodaira fibration} and the {\it Chow reduction}; see \S \ref{sec-kappa>0}.

\begin{proposition}\label{main-thm-caseB}
Let $f:X\to X$ be a surjective endomorphism of a $\Q$-Goreinstein normal projective variety $X$ with the anti-Kodaira dimension $\kappa(X,-K_X)>0$.
Suppose $f^*K_X\equiv \delta_f K_X$ (numerical equivalence).
Then KSC holds for $f$.
\end{proposition}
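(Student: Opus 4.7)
The plan is to descend the problem along the anti-Iitaka fibration to a base where Proposition~\ref{prop-ksc-kappa(D)>=0} applies.

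First, by Theorem~\ref{main-thm-kappa} we may write $-K_X \sim_{\Q} D$ for some effective $\Q$-Cartier divisor $D$ with $\kappa(X, D) = \kappa(X, -K_X) > 0$; the hypothesis $f^* K_X \equiv \delta_f K_X$ then gives $f^* D \equiv \delta_f D$. Let $\phi\colon X \dashrightarrow Y$ be the Iitaka fibration of $D$, so $\dim Y > 0$ and a general fiber $F$ satisfies $\kappa(F, D|_F) = 0$. Since $f$ numerically preserves the class of $D$ up to scale, $f$ sends general fibers of $\phi$ generically to general fibers. Applying the Chow reduction to $\phi$ (after replacing $f$ by a positive iterate), I would obtain a dominant rational map $\psi\colon X \dashrightarrow Z$ birationally factoring $\phi$ together with a surjective endomorphism $g\colon Z \to Z$ satisfying $g \circ \psi = \psi \circ f$. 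Standard properties of the Iitaka fibration (after passing to a resolution) yield a big $\Q$-Cartier divisor $D_Z$ on $Z$ with $\psi^* D_Z \sim_{\Q} mD + E$ for some $m \in \Z_{>0}$ and $\psi$-exceptional $E$, and $g^* D_Z \equiv \delta_f D_Z$; in particular $\delta_g = \delta_f$.

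The crucial step is to upgrade $g^* D_Z \equiv \delta_f D_Z$ to a $\Q$-linear equivalence on a smooth birational model $\widetilde{Z}$ of $Z$. Setting $\theta := g^* D_Z - \delta_f D_Z \in \Pic^0(\widetilde{Z})_{\Q}$, I would seek $\alpha \in \Pic^0(\widetilde{Z})_{\Q}$ with $(\delta_f - g^*)\alpha = \theta$ and replace $D_Z$ by $D_Z + \alpha$. Such an $\alpha$ exists as long as $\delta_f$ is not an eigenvalue of $g^*|_{\Pic^0(\widetilde{Z})_{\Q}}$, which follows from standard Hodge- or Serre-type bounds forcing those eigenvalues to have absolute value strictly less than $\delta_f$ when $\delta_f > 1$; the remaining case $\delta_f = 1$ falls into the automorphism-like regime already covered by results cited in the introduction. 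After this upgrade, $g^* D_Z \sim_{\Q} \delta_f D_Z$ holds with $\kappa(Z, D_Z) = \dim Z \ge 1$, and Proposition~\ref{prop-ksc-kappa(D)>=0} applied to $g$ on $Z$ yields $\alpha_g(z) = \delta_g = \delta_f$ for every $z \in Z(\overline{\Q})$ with Zariski-dense $g$-orbit.

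Finally, for $x \in X(\overline{\Q})$ with Zariski-dense $f$-orbit, $\psi(x)$ has Zariski-dense $g$-orbit in $Z$. The standard functoriality $\alpha_f(x) \ge \alpha_g(\psi(x))$ along equivariant dominant rational maps gives $\alpha_f(x) \ge \delta_f$, and combined with the automatic upper bound $\alpha_f(x) \le \delta_f$ this yields the desired equality. The main obstacle I anticipate is the rigorous setup of the Chow reduction, ensuring that $f$ descends to a genuine endomorphism of $Z$ together with a well-behaved big class $D_Z$ satisfying $g^* D_Z \equiv \delta_f D_Z$; once that is in place, the numerical-to-linear upgrade via eigenvalue bounds on $\Pic^0$ should be essentially formal.
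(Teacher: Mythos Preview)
Your overall strategy---descend along the anti-Iitaka fibration, use the Chow reduction to make the induced self-map on the base an honest endomorphism, then verify KSC there and pull back---matches the paper's. But two of your intermediate steps do not go through as written, and the paper handles the key one by a different mechanism.

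First, a small correction and a more serious one on the descent. Your appeal to Theorem~\ref{main-thm-kappa} is misplaced: that result assumes $X$ admits an int-amplified endomorphism, which is not in the hypotheses here; fortunately $\kappa(X,-K_X)>0$ already gives $-K_X\sim_{\Q}D\ge 0$ directly. More importantly, the sentence ``since $f$ numerically preserves the class of $D$ up to scale, $f$ sends general fibres of $\phi$ to general fibres'' is not a valid argument: the Iitaka fibration is governed by linear systems, not numerical classes, and numerically equivalent divisors can have different Iitaka fibrations when $\Pic^0\ne 0$. The paper's descent (Corollary~\ref{cor-iitaka-des}) uses instead the ramification formula $f^*(-K_X)=-K_X+R_f\ge -K_X$, which together with Lemmas~\ref{lem-kappa-<=} and~\ref{lem-iit-g'} compares $\phi_{-mK_X}$ and $\phi_{mf^*(-K_X)}$ via an inclusion of linear systems; only then does the Chow reduction (Theorem~\ref{thm-chow-equiv}) upgrade the resulting dominant self-map on the base to a surjective endomorphism $g$.

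The central gap is the one you yourself flag at the end: producing a big $D_Z$ on the base with $g^*D_Z\equiv\delta_f D_Z$. This does not follow from ``standard properties of the Iitaka fibration''. On a resolution one has $\sigma^*(mD)=M+F$ with $M=\phi^*D_Z$ and $F$ the fixed part; there is no reason $F$ should be a $\delta_f$-eigenvector for the lifted pullback, so one cannot read off $g^*D_Z\equiv\delta_f D_Z$ from $f^*D\equiv\delta_f D$. The paper does \emph{not} try to find such a $D_Z$. Instead, working on the graph $W$ of the Chow-reduced map, Lemma~\ref{lem-face-4} shows that $\phi_W^*(\PE^1(Y))$ is contained in the minimal extremal face $F\subseteq\PE^1(W)$ that contains $\sigma_W^*(-K_X)$. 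Since the lift $h$ on $W$ satisfies $h^*\sigma_W^*(-K_X)\equiv\delta_f\,\sigma_W^*(-K_X)$, the face $F$ is $h^*$-invariant, and by \cite[Proposition~2.9]{MZ} every eigenvalue of $h^*|_{\langle F\rangle}$ has modulus $\delta_f$. It follows that $g^*|_{\N^1(Y)}$ has all eigenvalues of modulus $\delta_f$, whence $g$ is $\delta_f$-polarized by \cite[Propositions~2.9 and~1.1]{MZ}. KSC for $g$ is then the polarized case, and Lemma~\ref{lem-ksc-iff}(2) transfers it to $f$. This cone-theoretic step replaces both your search for a specific eigendivisor $D_Z$ and your subsequent numerical-to-$\Q$-linear upgrade on $\Pic^0$; in particular the eigenvalue bound you invoke on $\Pic^0(\widetilde Z)_{\Q}$ (which in the paper, Proposition~\ref{prop-eig-sqrt}, requires surjectivity of the Albanese map and moreover would need $g$ to lift to a morphism on the smooth model $\widetilde Z$) is never needed.
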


Let $\pi:X\to Y$ be a finite surjective endomorphism of normal projective varieties.
Denote by $R_{\pi}$ the ramification divisor of $\pi$ so that $K_X = \pi^*K_Y + R_{\pi}$.
For a surjective endomorphism $f:X\to X$, it is said to have {\it totally invariant ramification} if
$f^{-1}(\Supp R_f)=\Supp R_f$.

By another key observation (cf.~Proposition \ref{lem-fano}) and induction on the dimension, after running an equivariant minimal model program, we may assume $f^*K_X\equiv \delta_f K_X$, or else KSC holds.
We then further show that we are only left with the following case for Question \ref{que-ksc-iamp} by virtue of Proposition \ref{main-thm-caseB}.
Here, we remark in advance that Condition (A5) below is implied by Conditions (A1) - (A4); see Theorem \ref{thm-tir-1}.

\par \vskip 1pc
{\bf Case TIR}$_n$ (Totally Invariant Ramification case).
Let $X$ be a normal projective variety of dimension $n \ge 1$, which has only $\Q$-factorial Kawamata log terminal (klt) singularities and admits one int-amplified endomorphism.
Let $f:X\to X$ be an arbitrary surjective endomorphism.
Moreover, we impose the following conditions.
\begin{itemize}
\item[(A1)]
The anti-Kodaira dimension $\kappa(X,-K_X)=0$; $-K_X$ is nef, whose class is extremal in both the {\it nef cone} $\Nef(X)$ and the {\it pseudo-effective divisors cone} $\PE^1(X)$.
\item[(A2)]
$f^*D = \delta_f D$ for some prime divisor $D\sim_{\Q} -K_X$.
\item[(A3)]
The ramification divisor of $f$ satisfies $\Supp \, R_f = D$.
\item[(A4)]
There is an $f$-equivariant Fano contraction $\pi:X\to Y$ with $\delta_f>\delta_{f|_Y}$ ($\ge 1$).
\item[(A5)]
$\dim(X) \ge \dim(Y)+2 \ge 3$.
\end{itemize}

Our following result is also a bit surprise to us, where we are able to reduce Question \ref{que-ksc-iamp} to the above highly geometrically restrictive Case TIR, by merely starting from an arbitrary endomorphism without any pre-given geometric information.

\begin{theorem}\label{main-thm-tir}
Let $X$ be a normal projective variety having only $\Q$-factorial Kawamata log terminal (klt)  singularities and one int-amplified endomorphism.
Then we have:
\begin{itemize}
\item[(1)] If $K_X$ is pseudo-effective, then KSC holds for any surjective endomorphism of $X$.
\item[(2)] Suppose that KSC holds for Case TIR
(for those $f|_{X_i} : X_i \to X_i$ appearing in any equivariant MMP starting from $X$).
Then KSC holds for any (not necessarily int-amplified) surjective endomorphism $f$ of $X$.
\end{itemize}
\end{theorem}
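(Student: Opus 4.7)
\emph{Proof proposal.}
The plan is to run an $f$-equivariant minimal model program (MMP) and then invoke the structural results established earlier in the paper. First I would replace $f$ by a sufficiently divisible positive power (which alters neither $\alpha_f(x)$ nor the validity of KSC) and apply \cite[Theorems 1.1 and 1.2]{MZ_PG} to obtain an $f$-equivariant MMP
$$
X = X_0 \dashrightarrow X_1 \dashrightarrow \cdots \dashrightarrow X_r,
$$
in which every step is a divisorial contraction, a flip, or a Fano contraction, every $X_i$ is $\Q$-factorial klt, and each restriction $f_i := f|_{X_i}$ is a surjective endomorphism. For each divisorial-contraction or flip step, the standard comparison of arithmetic and dynamical degrees (e.g.\ via the techniques of \cite{Mat}) lets me propagate KSC backwards along the program, so it suffices to establish KSC at the terminal $X_r$.

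For part (1), I would argue that since $X$ admits an int-amplified endomorphism, \cite[Theorem 1.5]{Meng_IAMP} makes $-K_X$ nef; pairing with $H^{n-1}$ for any ample $H$ then forces $K_X \equiv 0$ under the pseudo-effectivity hypothesis on $K_X$. The ramification formula $K_X = f^*K_X + R_f$ now gives $R_f \equiv 0$, and since $R_f$ is effective this forces $R_f = 0$, so $f$ is quasi-\'etale. I would finish by invoking the klt Calabi--Yau structure: passing to a quasi-\'etale cover by an abelian variety reduces the problem to case (v) of the remark, or alternatively a direct spectral analysis shows that $f^*$ acts semisimply on $\N^1(X)_{\R}$ with all eigenvalues of absolute value $\deg(f)^{1/n}$, which together with the Kawaguchi--Silverman upper bound $\alpha_f(x) \le \delta_f$ and matching height lower bounds delivers KSC.

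For part (2), at the terminal $X_r$ the MMP leaves two scenarios. If $K_{X_r}$ is pseudo-effective, part (1) applies to $f_r$ directly. Otherwise $f_r$ admits an equivariant Fano contraction $\pi: X_r \to Y_r$, and I would split on whether $\delta_{f_r} = \delta_{f_r|_{Y_r}}$ or $\delta_{f_r} > \delta_{f_r|_{Y_r}}$. In the equal case, induction on dimension applied to $Y_r$, together with a fiber-wise arithmetic-degree comparison, yields KSC for $f_r$. In the strict-inequality case, Proposition \ref{lem-fano} forces $f_r^*K_{X_r} \equiv \delta_{f_r} K_{X_r}$; when $\kappa(X_r, -K_{X_r}) > 0$, Proposition \ref{main-thm-caseB} closes the argument, and when $\kappa(X_r, -K_{X_r}) = 0$, Theorem \ref{main-thm-kappa} produces a reduced Weil divisor $D{\sim}_{\Q}-K_{X_r}$ that is totally $f_r$-invariant, so that I can verify conditions (A1)--(A5) of Case TIR after handling the small-dimensional boundary cases (namely $Y_r$ a point, via the Mori-dream-space result of case (iv), and one-dimensional fibers, via Theorem \ref{main-thm-surface}). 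The main obstacle will be precisely the verification of (A1)--(A5), particularly pinning down $\Supp R_{f_r} = D$ required for (A3) out of the $\kappa(X_r, -K_{X_r}) = 0$ conclusion of Theorem \ref{main-thm-kappa}(2), and establishing the extremality of the class $[-K_{X_r}]$ in both $\Nef(X_r)$ and $\PE^1(X_r)$ that (A1) demands.
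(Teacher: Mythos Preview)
Your overall strategy matches the paper's, but there is a genuine gap in part (2). Your dichotomy on whether $\delta_{f_r}$ equals or exceeds $\delta_{f_r|_{Y_r}}$ does not align with the trichotomy of Proposition \ref{lem-fano}. You assert that in the strict-inequality case the proposition ``forces $f_r^*K_{X_r} \equiv \delta_{f_r} K_{X_r}$'', but this is false: even with $\delta_{f_r} > \delta_{f_r|_{Y_r}}$ for the \emph{given} Fano contraction, Proposition \ref{lem-fano} may output its Case (3), in which a \emph{different} equivariant MMP $X_r \dashrightarrow Y'$ is produced with $\delta_{f_r|_{Y'}} = \delta_{f_r}$ (this happens when the auxiliary class $B = D + aK_{X_r}$ in the proof of \ref{lem-fano} fails to be pseudo-effective, and one runs a $(K_{X_r}+A)$-MMP to find another Fano base). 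The paper therefore applies Proposition \ref{lem-fano} as a black box and treats its three cases separately: Case (1) is immediate, Case (3) strictly lowers dimension and one inducts, and only Case (2) invokes the TIR hypothesis. For the induction step you must also track that the int-amplified endomorphism $\mathcal{I}$ descends along the MMP so that $Y'$ again satisfies the hypotheses, and you must verify the standing assumptions of \ref{lem-fano}, namely $\kappa(X_r,-K_{X_r}) \ge 0$ (Theorem \ref{main-thm-kappa}) and surjectivity of the Albanese morphism (\cite[Theorem 1.8]{Meng_IAMP}); neither appears in your sketch.

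Separately, the ``main obstacles'' you flag are not obstacles at all: the extremality of $[-K_{X_r}]$ in both $\Nef(X_r)$ and $\PE^1(X_r)$ and the irreducibility of $\Supp D$ are already part of the \emph{conclusion} of Proposition \ref{lem-fano} Case (2), not conditions you must establish afterwards. Combined with Theorem \ref{thm-kappa}(2), which (under $\kappa(X_r,-K_{X_r})=0$) gives that $D$ is reduced, $f_r^{-1}(D)=D$, and $\Supp R_{f_r} \subseteq D$, one gets $f_r^*D = \delta_{f_r} D$ with $\delta_{f_r} > 1$ and hence $\Supp R_{f_r} = D$; condition (A5) then comes from Theorem \ref{thm-tir-1}. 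So once you invoke \ref{lem-fano} correctly, (A1)--(A5) are packaged for you, and your proposed boundary-case analysis (``$Y_r$ a point'', ``one-dimensional fibres'') is unnecessary. For part (1), your argument is correct in spirit, but note that the step ``passing to a quasi-\'etale cover by an abelian variety'' is exactly the nontrivial content of \cite[Theorem 1.9]{Meng_IAMP}, which the paper simply cites together with Theorem \ref{thm-ksc-qa}.
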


As a consequence, Question \ref{que-ksc-iamp} can be reduced to the following:
\begin{question}\label{que-tir}
	Does there exist $f: X \to X$ satisfying Case TIR (plus, if necessary, that $X$ is rationally connected as defined below)?
	If such $f$ exists,
	does it satisfy KSC?
\end{question}

\begin{remark}
Condition (A5) of Case TIR implies that $\dim(X) \ge 3$.
Recently, Matsuzawa and Yoshikawa constructed in \cite[\S 7]{MY} an interesting example: a klt rational surface $X$ satisfying all the conditions of Case TIR$_2$ except (A4) and (A5).
Moreover, $X$ admits an (equivariant) quasi-\'etale cover which is a (smooth) ruled surface over an elliptic curve,
and the totally invariant divisor $D$ there is an elliptic curve.
\end{remark}

A projective variety $X$ is said to be {\it rationally connected}, in the sense of Campana and Kollar-Miyaoka-Mori
(\cite{Ca}, \cite{KMM}),
if two general points of $X(\C)$ are connected by a rational curve, after taking one (and hence every) embedding of the defining field of $X$ into $\C$; see also \cite[Definition 3.2, Exercise 3.2.5]{Kol}.

Let $X$ be a rationally connected smooth projective variety admitting an int-amplified endomorphism $f$ with totally invariant ramification.
In \cite[Corollary 1.4]{MZ_toric}, the authors showed that $X$ is then toric if $f$ is polarized.
For the int-amplified case, the difficulty lies in showing the semistablity for the reflexive sheaf of germs of logarithmic 1-forms; see Section \ref{sec-toric} for the details.
Nevertheless, we are able to prove the following:

\begin{proposition}\label{main-prop-toric-iamp} (cf.~Proposition \ref{prop-toric-iamp})
Let $f:X\to X$ be an int-amplified endomorphism of a rationally connected smooth projective variety $X$ with totally invariant ramification,
i.e., $f^{-1}(\Supp R_f) = \Supp R_f$.
Suppose that $X$ admits some MMP
$$X=X_1\dashrightarrow\cdots \dashrightarrow X_r\to Y=\mathbb{P}^1$$
where $X_i\dashrightarrow X_{i+1}$ is birational and $X_r\to Y$ is a Fano contraction.
Then $X_i$ is a toric variety for each $i$.
In particular, KSC holds for any surjective endomorphism of $X_i$.
\end{proposition}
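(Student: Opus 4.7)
The plan is to extend the argument of \cite[Corollary 1.4]{MZ_toric}, which handles the polarized case, to the present int-amplified setting, and then to deduce toricness of each $X_i$ from the equivariant MMP. Once this is done, the ``in particular'' statement is automatic, since $\mathbb{Q}$-factorial projective toric varieties are Mori dream spaces and Matsuzawa has established KSC for every surjective endomorphism of such spaces (\cite[Theorem 4.1, Corollary 4.2]{Mat}).

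First I would replace $f$ by a positive power so that, by \cite[Theorems 1.1 and 1.2]{MZ_PG}, the whole MMP $X = X_1 \dashrightarrow \cdots \dashrightarrow X_r \to Y = \mathbb{P}^1$ becomes $f$-equivariant, inducing on each $X_i$ an int-amplified endomorphism $f_i$ (cf.~\cite{Meng_IAMP}). The totally invariant ramification condition $f_i^{-1}(\Supp R_{f_i}) = \Supp R_{f_i}$ propagates through the MMP, and the induced endomorphism $g$ on $Y = \mathbb{P}^1$ is automatically polarized, of the form $z \mapsto z^d$ up to coordinates, with exactly two totally invariant points whose $\pi$-preimages contribute two invariant horizontal components of $D_r := \Supp R_{f_r}$. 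Because birational MMP steps in the $f$-equivariant setting preserve toricness (the inverse of a toric divisorial contraction, and a toric flip, are again toric), it suffices to prove toricness for one of the $X_i$, and I focus on the Fano-fibered endpoint $X_r$.

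The core task is to establish $(X_r, D_r)$ as a toric pair via a characterization of toric log pairs in the spirit of \cite{MZ_toric}. The essential technical input for such a criterion is semistability (with respect to a suitable polarization) of the reflexive logarithmic cotangent sheaf $\Omega^{[1]}_{X_r}(\log D_r)$. In the polarized case of \cite{MZ_toric}, the relation $f^*H \sim qH$ with $H$ ample delivers this semistability via eigenvalue arguments for the action of $f^*$ on $\mathrm{N}^1(X_r)$ and on the Harder-Narasimhan filtration of the sheaf. In the int-amplified case the weaker relation ``$f_r^*L - L$ ample'' is not by itself strong enough, and this is precisely the main obstacle. The plan to overcome it is to exploit the Fano contraction $\pi: X_r \to \mathbb{P}^1$: on the base, the polarized dynamics of $g$ does supply the missing eigenvalue control, and on a general fiber $F$ the restriction of (an iterate of) $f_r$ yields an int-amplified endomorphism of a rationally connected smooth Fano variety with totally invariant ramification $D_r \cap F$, to which an inductive hypothesis on $\dim X$ (formulated so as to cover fibers of Mori fiber spaces) should apply to give a toric $(F, D_r \cap F)$. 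Combining the base-polarized control with the fibrewise toric splitting through the relative logarithmic cotangent sequence on $\pi$ is expected to deliver the global semistability needed; at that point the toric criterion closes the argument for $X_r$, and the $f$-equivariant birational MMP then transports toricness back to every $X_i$.
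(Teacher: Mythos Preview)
Your proposal has a genuine gap: the route you sketch for semistability of $\hat{\Omega}^1_{X_r}(\log D_r)$ does not close. The inductive hypothesis you invoke on the general fibre $F$ of $\pi:X_r\to\mathbb{P}^1$ is not available, since $F$ is a klt Fano variety that need not admit an MMP ending in a Fano contraction to $\mathbb{P}^1$; the statement being proved gives you nothing on $F$. Moreover, you choose to work on $X_r$, which is only $\mathbb{Q}$-factorial klt, whereas the toric criterion via freeness of the reflexive log cotangent sheaf and the complexity bound (as in \cite{MZ_toric}) is set up for \emph{smooth} $X$; this is why the paper works on $X=X_1$ and then pushes toricness forward along the equivariant MMP, not the other way around. (A minor point: the preimages of the totally $g$-invariant points of $\mathbb{P}^1$ are \emph{vertical} divisors, not horizontal, and $g$ need not be conjugate to $z\mapsto z^d$.)

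What actually drives the argument in the paper is a structural fact you do not mention: after an iterate, $f^*|_{\N^1(X)}$ has \emph{at most two} positive integral eigenvalues. This is obtained by showing that every $\tau$-exceptional prime divisor $E\subset X$ satisfies $f^*E=aE$ with $a$ equal either to $\delta_g$ (when the image of $E$ in $Y$ is a point) or to the relative eigenvalue $q$ coming from a $\pi$-ample class (when $E$ dominates $Y$; here one uses a $g$-periodic general fibre and polarizedness of the restriction). With only two eigenvalues, any ample $H$ decomposes as $H=A+B$ with $A,B$ nef eigenvectors, and semistability of $\hat{\Omega}^1_X(\log D)$ with respect to $H$ follows by a direct slope computation (Proposition \ref{prop-omega-ss}); together with the Chern-class vanishing this gives freeness and hence the toric pair $(X,D)$ by the complexity criterion. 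Toricness of the remaining $X_i$ then follows because the big torus action on $X$ descends along the MMP. Your fibrewise/relative-cotangent plan does not supply this eigenvalue control, and without it the semistability step---the acknowledged obstacle---remains open.
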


By Proposition \ref{main-prop-toric-iamp}, one can rule out Case TIR$_3$ during any MMP starting from a rationally connected smooth projective threefold.
Namely, we have:

\begin{theorem}\label{main-thm-src3}
Let $X$ be a rationally connected smooth projective threefold admitting one int-amplified endomorphism.
Then KSC holds for an arbitrary surjective endomorphism of $X$.
\end{theorem}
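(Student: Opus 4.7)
The plan is to combine Theorem~\ref{main-thm-tir}(2) with Proposition~\ref{main-prop-toric-iamp}. Let $X$ be a rationally connected smooth projective threefold admitting an int-amplified endomorphism $g$, and let $f\colon X\to X$ be an arbitrary surjective endomorphism. Since $X$ is smooth, and so in particular $\Q$-factorial klt, Theorem~\ref{main-thm-tir}(2) reduces the task to verifying KSC for Case TIR for each $f|_{X_i}$ appearing in any equivariant MMP starting from $X$. Fix such an MMP $X=X_1\dashrightarrow\cdots\dashrightarrow X_r$, made equivariant for both $g$ and $f$ after replacing them by suitable positive powers, and suppose some $X_i$, together with $f|_{X_i}$ and an $f$-equivariant Fano contraction $\pi\colon X_i\to Y$, falls into Case TIR$_3$.

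Condition (A5) together with $\dim X_i=3$ forces $\dim Y=1$, and rational connectedness of $X$ (passed to $X_i$ and hence to its image $Y$) identifies $Y$ with $\mathbb{P}^1$. A Fano contraction is the terminal step of a $K$-MMP, so we obtain
$$X=X_1\dashrightarrow\cdots\dashrightarrow X_i\to Y=\mathbb{P}^1,$$
exactly the MMP configuration required by Proposition~\ref{main-prop-toric-iamp}. It remains to produce on $X$ itself an int-amplified endomorphism with totally invariant ramification.

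Because the anti-canonical Iitaka dimension is preserved under each step of a $K$-MMP, condition (A1) on $X_i$ gives $\kappa(X,-K_X)=0$. Theorem~\ref{main-thm-kappa}(2) then yields a reduced Weil divisor $D_X\sim_\Q -K_X$ with $g^{-1}(D_X)=D_X$ and $g|_{X\setminus D_X}$ quasi-\'etale. After replacing $g$ by a positive power, we may assume $g^{-1}(D_j)=D_j$ for every irreducible component $D_j$ of $D_X=\sum_j D_j$, so that $g^*D_j=c_j D_j$ for a positive integer $c_j$ (the ramification index of $g$ along $D_j$). Rational connectedness of $X$ forces $H^1(X,\OO_X)=0$, hence $\Pic^0(X)=0$, so each non-zero effective divisor has non-zero numerical class; in particular $[D_j]\neq 0$ in $\N^1(X)$, and $c_j$ is a positive real eigenvalue of $g^*|_{\N^1(X)}$. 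Since $g$ is int-amplified, every eigenvalue of $g^*|_{\N^1(X)}$ has absolute value $>1$, so $c_j>1$ for all $j$. Riemann--Hurwitz gives $R_g=\sum_j(c_j-1)D_j$, hence $\Supp R_g=D_X$ and
$$g^{-1}(\Supp R_g)=g^{-1}(D_X)=D_X=\Supp R_g,$$
i.e., $g$ has totally invariant ramification on $X$.

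Proposition~\ref{main-prop-toric-iamp} now applies and shows each $X_j$ $(1\le j\le i)$ in this MMP is toric, so KSC holds for every surjective endomorphism of $X_j$. In particular KSC holds for $f|_{X_i}$, verifying Case TIR$_3$; Theorem~\ref{main-thm-tir}(2) then concludes KSC for $f$ on $X$. The main obstacle is this final transfer: Proposition~\ref{main-prop-toric-iamp} requires a smooth starting variety, whereas Case TIR is stated on the potentially singular $X_i$. Overcoming this forces us to upgrade the totally invariant ramification of $f$ on $X_i$ to the same property for the ambient int-amplified $g$ on the smooth model $X$, which is exactly what Theorem~\ref{main-thm-kappa}(2) and the eigenvalue characterization of int-amplified endomorphisms provide.
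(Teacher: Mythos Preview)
Your proof is correct and lands on the same application of Proposition~\ref{main-prop-toric-iamp}, but the route to producing an int-amplified endomorphism on $X$ with totally invariant ramification is genuinely different from the paper's. The paper works inside the monoid $\SEnd(X)$: it replaces $f$ by $f^k\circ\mathcal{I}$ for $k\gg 1$ so that the new $f$ is itself int-amplified, checks (via \cite[Theorem~1.4]{MZ_PG}) that $f_r$ still lies in Case TIR$_3$, then reads off totally invariant ramification for $f$ on $X$ from condition~(A3) on $X_r$ and the equivariance of the MMP; it finishes by contradiction, since $X_r$ toric forces $\kappa(X_r,-K_{X_r})>0$. You instead keep $f$ and the ambient int-amplified $g$ separate, transport $\kappa(-,-K)=0$ from $X_i$ back to $X$, and invoke Theorem~\ref{main-thm-kappa}(2) together with the eigenvalue criterion for int-amplified maps to force $\Supp R_g=D_X$ directly on the smooth model; you then conclude positively that $X_i$ is toric, hence a Mori dream space, so KSC holds there. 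Your route sidesteps the verification that Case TIR survives the composition $f\mapsto f^k\circ\mathcal{I}$, at the cost of pulling $\kappa(-K)=0$ back along the MMP. One small imprecision: the anti-Iitaka dimension is in general only non-decreasing along a $K$-MMP rather than literally preserved, but since Theorem~\ref{main-thm-kappa}(1) already gives $\kappa(X,-K_X)\ge 0$, your conclusion $\kappa(X,-K_X)=0$ is unaffected.
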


\begin{remark} [{\bf Applications of our results}]
Our approach in this paper lays down the frame work to attack the KSC for an arbitrary surjective endomorphism on a (possibly singular or non-rationally connected) projective
variety $X$ admitting an int-amplified endomorphism, by reducing to Case TIR$_n$; see Questions \ref{que-ksc-iamp} and \ref{que-tir}.
For instance, the steps, including the equivariant MMP, and the results of the current paper are essentially used/restated by Matsuzawa and Yoshikawa \cite[Proposition 5.1, Lemmas 5.2-5.3, Remark 1.2]{MY_rc}.
Indeed, furthering our exclusion of Case TIR$_n$ when $n = 3$,
they have applied our equivariant MMP and made progress towards our frame work by excluding Case TIR during the MMP when $X$ is smooth and rationally connected. It shows that to attack the KSC for arbitrary (possibly singular) variety it is important/inevitable to study Case TIR$_n$ to which narrowed down by the current paper.
\end{remark}

\par \vskip 1pc
{\bf Acknowledgement.}
Many thanks to Y.~Matsuzawa for inspiring discussions, to F.~Hu, and the referee for valuable suggestions to improve this paper, to Max Planck Institute for Mathematics, Bonn for the first author's Postdoc Fellowship, to organisers of Simons Symposium on Algebraic, Complex and Arithmetic Dynamics, May 2019, for the opportunity of a talk.
The first author is supported in part by Science and Technology Commission of Shanghai Municipality (No. 22DZ2229014).
The second author is supported by an ARF of NUS.

\section{Preliminaries}
\begin{notation*}\label{notation}
Let $X$ and $Y$ be projective varieties of dimension $n$.
Let $f:X\to X$ be a surjective endomorphism  and $\pi:X\to Y$ a finite surjective morphism.
We say $\pi$ is {\it quasi-\'etale} if it is \'etale in codimension $1$.

Two $\R$-Cartier divisors $D_i$ of $X$ are {\it numerically equivalent}, denote as $D_1\equiv D_2$, if $(D_1 - D_2)\cdot C=0$ for any curve $C$ on $X$.
Two $r$-cycles $C_i$ of $X$ are {\it weakly numerically equivalent}, denoted as $C_1 \equiv_w C_2$, if
$(C_1 - C_2)\cdot L_1 \cdots L_{n - r} = 0$ for
all Cartier divisors $L_i$.
The numerical equivalence implies weak numerical equivalence; see \cite[Section 2]{MZ}.

We use the following notation throughout the paper unless otherwise stated.
\renewcommand*{\arraystretch}{1.1}

\begin{longtable}{p{2cm} p{12cm}}
$\Pic(X)$    &the group of Cartier divisors of $X$ modulo linear equivalence $\sim$\\
$\Pic_{\mathbb{K}}(X)$    &$\Pic(X)\otimes_{\Z}\mathbb{K}$ with $\mathbb{K}=\mathbb{Q}, \mathbb{R},\mathbb{C}$\\
$\Pic^0(X)$    &the neutral connected component of $\Pic(X)$\\
$\Pic^0_{\mathbb{K}}(X)$    &$\Pic^0(X)\otimes_{\Z}\mathbb{K}$ with $\mathbb{K}=\mathbb{Q}, \mathbb{R},\mathbb{C}$\\
$\NS(X)$    &$\Pic(X)/\Pic^0(X)$, the N\'eron-Severi group\\
$\N^1(X)$    &$\NS(X)\otimes_{\Z}\mathbb{R}$, the space of $\R$-Cartier divisors modulo numerical equivalence $\equiv$\\
$\NS_{\mathbb{K}}(X)$    &$\NS(X)\otimes_{\Z}\mathbb{K}$ with $\mathbb{K}=\mathbb{Q}, \mathbb{R},\mathbb{C}$\\
$\N_r(X)$    &the space of $r$-cycles modulo weak numerical equivalence $\equiv_w$\\
$f^*|_V$    & the pullback action on $V$, which is any group or space above\\
$f_*|_V$    & the pushforward action on $V$, which is any group or space above\\
$\Nef(X)$   & the cone of nef classes in $\N^1(X)$\\
$\NE(X)$   & the cone of pseudo-effective classes in $\N_1(X)$\\
$\PE^1(X)$  & the cone of pseudo-effective classes in $\N^1(X)$\\
$R_{\pi}$    & the ramification divisor of $\pi$ assuming that $X$ and $Y$ are normal\\
$\Supp D$   & the support of $D=\sum a_i D_i$ which is $\bigcup_i D_i$, where $a_i>0$ and $D_i$ are prime divisors\\
$\SEnd(X)$ &the monoid of all the surjective endomorphisms of $X$\\
$\kappa(X,D)$ & Iitaka dimension of a $\Q$-Cartier divisor $D$\\
$\rho(X)$    & Picard number of $X$ which is $\dim_{\R} \N^1(X)$
\end{longtable}

\end{notation*}

\begin{definition}
Let $f:X\to X$ be a surjective endomorphism of a variety $X$ and $Z\subseteq X$ a subset.
$Z$ is said to be {\it $f$-invariant} (resp. {\it $f^{-1}$-invariant}) if $f(Z)=Z$ (resp.~$f^{-1}(Z)=Z$).
$Z$ is said to be {\it $f$-periodic} (resp. {\it $f^{-1}$-periodic}) if $f^s(Z)=Z$ (resp.~$f^{-s}(Z)=Z$) for some $s>0$.
\end{definition}

\begin{definition}\label{def-d1}(Dynamical degree;
$\delta_f$, $\iota_f$)
Let $f:X\to X$ be a surjective endomorphism of a projective variety $X$.
The (first) {\it dynamical degree} $\delta_f$ of $f$ is defined as the spectral radius of $f^*|_{\N^1(X)}$.
Another equivalent definition is
$$\delta_f=\lim\limits_{m\to+\infty}((f^m)^*H\cdot H^{\dim(X)-1})^{1/m},$$
where $H$ is any nef and big Cartier divisor of $X$.
Denote by $\iota_f$ the {\it minimal modulus of eigenvalues} of $f^*|_{\N^1(X)}$.
When $X$ is smooth over the complex field, $\delta_f$ (resp.~$\iota_f$) is equal to the maximal (resp.~minimal) modulus of eigenvalues of $f^*|_{H^{1,1}(X,\R)}$ (cf.~\cite{DS04}, \cite[\S4]{DS17}).
Note that $\delta_{f^s}=(\delta_f)^s$.
\end{definition}

\begin{definition}\label{def-a-deg}(Weil height function and arithmetic degree)
Let $X$ be a normal projective variety defined over $\overline{\mathbb{Q}}$.
We refer to \cite[Part B]{HS}, \cite{KS-R} or \cite[Section 2.2]{Mat} for the detailed definition of the {\it Weil height function} $h_D:X(\overline{\mathbb{Q}})\to \R$ associated with some $\R$-Cartier divisor $D$ on $X$.
Here, we simply list some fundamental facts which will be used later.
\begin{itemize}
\item $h_D$ is determined only up to a bounded function by the divisor $D$.
\item $h_{\sum a_i D_i}=\sum a_i h_{D_i}+O(1)$ where $O(1)$ means some bounded function.
\item $h_E$ is bounded below outside $\Supp E$ for any effective Cartier divisor $E$.
\item Let $\pi:X\to Y$ be a surjective morphism of normal projective varieties and $B$ some $\R$-Cartier divisor of $Y$.
Then $h_B(\pi(x))=h_{\pi^*B}(x)+O(1)$ for any $x\in X(\overline{\mathbb{Q}})$.
\end{itemize}

The {\it arithmetic degree} $\alpha_f(x)$ of $f$ at $x \in X(\overline{\mathbb{Q}})$ is defined as
$$\alpha_f(x) = \lim\limits_{m\to+\infty} \max\{1, h_H (f^m(x))\}^{1/m},$$
where $H$ is an ample Cartier divisor.
This limit exists and is independent of the choice of $H$ (cf.~\cite[Theorem 2]{KS-TAMS}, \cite[Proposition 12]{KS-R}).
Moreover, $\alpha_f(x)$ is either $1$ or the absolute value of an eigenvalue of $f^*|_{\N^1(X)}$ (cf.~\cite[Remark 23]{KS-TAMS}).
Note that $\alpha_f(x)\le \delta_f$ and $\alpha_{f^s}(x)=\alpha_f(x)^s$.
This allows us to replace $f$ by any positive power whenever needed.
\end{definition}

In the rest of this section, we list several fundamental results about KSC which are important and will be frequently used in the rest of the paper.

\begin{lemma}\label{lem-d1-bir}
Let $\pi:X\dashrightarrow Y$ be a dominant rational map of projective varieties.
Let $f:X\to X$ and $g:Y\to Y$ be surjective endomorphisms such that $g\circ \pi=\pi\circ f$.
Then $\delta_g\le \delta_f$.
Further, if $\pi$ is generically finite, then $\delta_g= \delta_f$.
\end{lemma}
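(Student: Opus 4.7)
The plan is to resolve the indeterminacy of $\pi$ and then compare intersection numbers on the resolution. Take a smooth projective variety $W$ together with a birational morphism $p\colon W\to X$ and a surjective morphism $q\colon W\to Y$ such that $q=\pi\circ p$ as rational maps. The semi-conjugacy $g\circ\pi=\pi\circ f$ then yields $g^m\circ q=\pi\circ f^m\circ p$ as rational maps for every $m\ge 0$. A first technical point is that $f$ need not lift to an endomorphism of $W$, so one cannot simply replace $X$ by $W$; instead every comparison is made on $W$ between classes pulled back from $X$ via $p$ and from $Y$ via $q$.

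For the inequality $\delta_g\le\delta_f$, fix ample Cartier divisors $A$ on $X$ and $H$ on $Y$, and set $n=\dim(X)$, $n'=\dim(Y)$. Compute $\delta_g$ as the limit $((g^m)^*H\cdot H^{n'-1})^{1/m}$, the goal being to bound this quantity by a uniform constant times $((f^m)^*A\cdot A^{n-1})^{1/m}$. Since $p$ is birational, $p^*A$ is nef and big on $W$, so Kodaira's lemma produces an integer $M>0$ with $Mp^*A-q^*H$ effective; thus $q^*H$ is dominated by $Mp^*A$ in the intersection-theoretic sense. Using the projection formula together with the rational semi-conjugacy $g^m\circ q=\pi\circ f^m\circ p$, the relevant intersection involving $q^*((g^m)^*H)$ is bounded above by a corresponding intersection involving $p^*((f^m)^*A)$, which pushes down to $X$ as a constant multiple of $(f^m)^*A\cdot A^{n-1}$. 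Taking $m$-th roots and letting $m\to\infty$ gives $\delta_g\le\delta_f$.

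For the equality when $\pi$ is generically finite, $n=n'$, and after a Stein factorization and a further birational modification one reduces to the case where $\pi\colon X\to Y$ is a finite surjective morphism. Then $\pi^*H$ is ample on $X$, and the semi-conjugacy gives $(f^m)^*(\pi^*H)=\pi^*((g^m)^*H)$; intersecting with $(\pi^*H)^{n-1}$ and applying the projection formula yields
$$((f^m)^*\pi^*H\cdot(\pi^*H)^{n-1})=\deg(\pi)\cdot((g^m)^*H\cdot H^{n-1}).$$
Taking $m$-th roots as $m\to\infty$ makes both sides compute the respective dynamical degree via an ample class, so $\delta_f=\delta_g$. The main obstacle throughout is the rational nature of $\pi$: the careful bookkeeping on $W$, and in particular the use of Kodaira's lemma to dominate $q^*H$ by a multiple of $p^*A$ so that the rational semi-conjugacy can be converted into an honest intersection-number estimate, is the delicate technical step.
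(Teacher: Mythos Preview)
Your overall strategy is sound, but there is a genuine gap, and it stems from a misconception you state explicitly: ``$f$ need not lift to an endomorphism of $W$.'' In fact, if you take $W$ to be the graph of $\pi$ in $X\times Y$ (not a smooth resolution), then the morphism $(f\circ p_X,\,g\circ p_Y)\colon W\to X\times Y$ has image contained in $W$, because $g\circ\pi=\pi\circ f$ forces $(f(x),g(\pi(x)))=(f(x),\pi(f(x)))$ to lie on the graph; hence it defines a surjective endomorphism $h\colon W\to W$ with $p_X\circ h=f\circ p_X$ and $p_Y\circ h=g\circ p_Y$. This is exactly what the paper does. With $h$ in hand the proof is immediate: since $p_X^*A$ is nef and big one has $\delta_f=\delta_h$ via the intersection-number definition; since $p_Y^*\colon\N^1(Y)\hookrightarrow\N^1(W)$ is injective and intertwines $g^*$ with $h^*$, the spectral radius of $g^*$ is at most that of $h^*$, giving $\delta_g\le\delta_h=\delta_f$. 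When $\pi$ is generically finite, $p_Y^*H$ is also nef and big, so the same computation yields $\delta_g=\delta_h$.

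Your Kodaira-lemma approach can be made to work, but the step you gloss over is exactly where the lifting is hiding. You obtain $Mp^*A-q^*H$ effective for one fixed $M$, and then assert that ``the relevant intersection involving $q^*((g^m)^*H)$ is bounded above by a corresponding intersection involving $p^*((f^m)^*A)$.'' For this you need $M\cdot(f^m\circ p)^*A-(g^m\circ q)^*H$ to be effective with the \emph{same} $M$ for every $m$; applying Kodaira anew for each $m$ gives an $M_m$ that may grow with $m$ and spoils the limit. The reason the uniform $M$ works is precisely that $(f^m\circ p,\,g^m\circ q)\colon W\to X\times Y$ factors through the graph $\Gamma_\pi$, so one can pull back the single effective divisor $Mp_X^*A-p_Y^*H$ from $\Gamma_\pi$ via this morphism. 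That factorization is nothing but the existence of the lift $h^m$. Similarly, your Stein-factorization reduction in the generically finite case needs an endomorphism on the intermediate variety, which again comes from $h$.

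So the missing idea is the lift to the graph; once you allow $W$ to be singular and take $W=\Gamma_\pi$, both your intersection-number estimates and the paper's spectral-radius argument go through, and the latter is considerably shorter.
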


\begin{proof}
For the convenience of the reader, we give a quick proof of this well known result.
Let $W$ be the graph of $\pi$ and $p_X:W\to X$ and $p_Y:W\to Y$ the two projections.
Here $p_X$ is a birational morphism and $p_Y$ is a surjective morphism.
Denote by $h:W\to W$ the lifting of $f$.
Let $H$ be any ample Cartier divisor of $X$.
By the projection formula, $$\delta_f = \lim\limits_{m\to+\infty} [(f^m)^*H\cdot H^{\dim(X)-1}]^{1/m}=\lim\limits_{m\to+\infty} [(h^m)^*(p_X^*H)\cdot (p_X^*H)^{\dim(W)-1}]^{1/m}=\delta_h$$ since $p_X^*H$ is nef and big.
Note that $p_Y^*:\N^1(Y)\to \N^1(W)$ is injective.
So $\delta_g\le \delta_f$.
Suppose $\pi$ is generically finite.
Let $A$ be an ample divisor of $Y$.
Then $p_Y^*A$ is nef and big.
A similar argument shows that $\delta_g=\delta_f$.
\end{proof}

The proof of the following lemma is taken from \cite[Lemma 5.6]{Mat}.
\begin{lemma}\label{lem-ksc-iff}
Let $\pi:X\dashrightarrow Y$ be a dominant rational map of projective varieties.
Let $f:X\to X$ and $g:Y\to Y$ be surjective endomorphisms such that $g\circ \pi=\pi\circ f$.
Then the following hold.
\begin{itemize}
\item[(1)] Suppose $\pi$ is generically finite.
Then KSC holds for $f$ if and only if KSC holds for $g$.
\item[(2)] Suppose $\delta_f=\delta_g$ and KSC holds for $g$.
Then KSC holds for $f$.
\end{itemize}
\end{lemma}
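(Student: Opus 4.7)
The plan is to work on the graph $W \subseteq X\times Y$ of $\pi$, with projections $p_X\colon W \to X$ (birational and surjective) and $p_Y\colon W\to Y$ (surjective; generically finite in Part~(1)). The intertwining relation $g\circ\pi = \pi\circ f$ lifts to a surjective endomorphism $h\colon W\to W$, $(x,y)\mapsto (f(x),g(y))$, satisfying $p_X\circ h = f\circ p_X$ and $p_Y\circ h = g\circ p_Y$. By Lemma~\ref{lem-d1-bir}, $\delta_h = \delta_f$, and in Part~(1) also $\delta_f = \delta_g$.

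For Part~(2), take $x\in X(\overline{\mathbb{Q}})$ with dense $f$-orbit; after replacing $x$ by $f^n(x)$ for sufficiently large $n$, we may assume $x$ lies in the domain of $\pi$, and set $y := \pi(x)$. Dominance of $\pi$ gives that $O_g(y) = \pi(O_f(x))$ is Zariski dense in $Y$, so KSC for $g$ yields $\alpha_g(y) = \delta_g = \delta_f$. To conclude $\alpha_f(x) = \delta_f$, it suffices (since $\alpha_f(x)\le \delta_f$ always) to prove $\alpha_f(x)\ge \alpha_g(y)$. Choose ample divisors $H$ on $X$ and $A$ on $Y$. Since $p_X^*H$ is nef and big on $W$, for suitable $c\in\Q_{>0}$ the $\Q$-class $c\,p_X^*H - p_Y^*A$ is big, so by Kodaira's lemma it is $\Q$-linearly equivalent to an effective divisor $E$ on $W$. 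Functoriality of heights under $p_X$ and $p_Y$ and the lower-boundedness of $h_E$ off $\Supp E$ then give
\[
 c\,h_H\bigl(f^m(x)\bigr) \;\ge\; h_A\bigl(g^m(y)\bigr) - O(1)
\]
for every $m$ such that $h^m(\tilde x)\notin \Supp E$, where $\tilde x := (x,y)\in W$. The lifted orbit $O_h(\tilde x)$ is Zariski dense in $W$ (since $p_X(O_h(\tilde x))=O_f(x)$ is dense in $X$ and $p_X$ is birational), so the inequality holds for infinitely many $m$; taking $m$-th roots and passing to the limit yields $\alpha_f(x)\ge \alpha_g(y)$.

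For Part~(1), the implication \emph{KSC for $g$ implies KSC for $f$} is immediate from Part~(2), since $\delta_f=\delta_g$ under generic finiteness. For the converse, given $y\in Y(\overline{\mathbb{Q}})$ with dense $g$-orbit, we may (after replacing $y$ by some $g^n(y)$, which preserves density) assume $y\in\pi(X(\overline{\mathbb{Q}}))$, and pick $x\in\pi^{-1}(y)$. The closure $Z := \overline{O_f(x)}$ is $f$-invariant and satisfies $\pi(Z)\supseteq \overline{O_g(y)} = Y$; since $\pi$ is generically finite and $X$ is irreducible, $\dim Z = \dim X$, whence $Z = X$ and $O_f(x)$ is dense. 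Thus $\alpha_f(x) = \delta_f = \delta_g$ by KSC for $f$, and the same height comparison with the roles of $p_X$ and $p_Y$ interchanged (now $p_Y^*A$ is also nef and big, as $p_Y$ is generically finite and surjective) yields $\alpha_g(y)\ge \alpha_f(x)$, hence $\alpha_g(y) = \delta_g$.

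The principal technical obstacle is the height comparison on $W$: one must convert the big $\Q$-class $c\,p_X^*H - p_Y^*A$ into a genuine effective divisor via Kodaira's lemma, control bounded error terms across the rational-map indeterminacy, and ensure that the lifted orbit $O_h(\tilde x)$ meets $W\setminus \Supp E$ infinitely often, for which the density of $O_h(\tilde x)$ in $W$ is essential. In Part~(1), one must also verify via dimension count that preimages of dense-orbit points are themselves dense-orbit points, which is exactly where generic finiteness is indispensable.
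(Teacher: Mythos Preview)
Your proof is correct and follows essentially the same strategy as the paper: pass to the graph $W$, use that nef-and-big pullbacks dominate nef pullbacks up to an effective divisor (Kodaira's lemma), and exploit Zariski density of the lifted orbit to find infinitely many iterates avoiding that divisor. The only differences are organizational: the paper first reduces to $\pi$ a generically finite \emph{morphism} and then argues directly with $\pi^*H$, proving (1) before (2), and it obtains the inequality $\alpha_g(\pi(x))\le\alpha_f(x)$ by the simpler observation that $h_{\pi^*H}\le c\,h_{H'}+O(1)$ for $H'$ ample on $X$ (since $cH'-\pi^*H$ is ample), rather than via an effective divisor on $W$; you instead run the Kodaira argument symmetrically for both inequalities. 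One small imprecision worth tightening: in your Part~(1) converse you write ``$\pi(Z)\supseteq\overline{O_g(y)}$'' with $\pi$ only rational; it is cleaner to carry out the dimension count directly on $W$ (the closure of $O_h(\tilde x)$ surjects onto $Y$ via the generically finite $p_Y$, hence equals $W$), which is what your setup naturally provides.
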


\begin{proof}
For (1), by taking the graph of $\pi$, it suffices for us to consider the case when $\pi$ is a generically finite surjective morphism.
By Lemma \ref{lem-d1-bir}, $\delta_f=\delta_g$.
Let $x$ be a closed point of $X$.
It is clear that $\overline{O_f(x)}=X$ if and only if $\overline{O_g(\pi(x))}=Y$.
Take any $x\in X$ with Zariski dense orbit.
Let $H$ be an ample Cartier divisor of $Y$.
We have
$$h_H(g^m(\pi(x))) = h_H(\pi(f^m(x))) = h_{\pi^*H}(f^m(x))+ O(1).$$
So $\alpha_g(\pi(x))\le \alpha_f(x)$.
Since $\pi$ is generically finite, we may write $\pi^*H=A+E$ for some ample Cartier divisor $H$ and effective Cartier divisor $E$ after replacing $H$ by a multiple.
There exists an infinite sequence $n_1<n_2<\cdots$ such that $\{f^{n_i}(x)\,|\, i=1, 2, \cdots\}$ is Zariski dense in $X$ and $f^{n_i}(x)\not\in \Supp E$.
Since $h_E$ is bounded below outside $\Supp E$,
we have  $$h_H(g^{n_i}(\pi(x)))=h_A(f^{n_i}(x))+h_E(f^{n_i}(x))+O(1)\ge h_A(f^{n_i}(x))+O(1).$$
This implies that $\alpha_g(\pi(x))\ge \alpha_f(x)$.
So (1) is proved.

For (2), we may assume that $\pi$ is a surjective morphism by (1).
By the first equality, we have $\delta_g= \alpha_g(\pi(x))\le \alpha_f(x)\le\delta_f$ and (2) is proved.
\end{proof}

\begin{lemma}\label{lem-ksc-x}(cf.~\cite[Lemma 3.2]{San})
Let $f:X\to X$ and $g:Y\to Y$ be two surjective endomorphisms of projective varieties.
Suppose KSC holds for both $f$ and $g$.
Then KSC holds for $f\times g$.
\end{lemma}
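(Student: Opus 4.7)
The plan is to reduce to Lemma~\ref{lem-ksc-iff}(2) via the first projection, once the identity $\delta_{f\times g}=\max(\delta_f,\delta_g)$ has been established.

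First I would verify this identity for the dynamical degree. The projections $p_X:X\times Y\to X$ and $p_Y:X\times Y\to Y$ are surjective morphisms with $f\circ p_X=p_X\circ(f\times g)$ and $g\circ p_Y=p_Y\circ(f\times g)$, so Lemma~\ref{lem-d1-bir} immediately yields $\delta_{f\times g}\ge\max(\delta_f,\delta_g)$. For the reverse inequality, pick ample Cartier divisors $H_X,H_Y$ on $X,Y$ and set $H:=p_X^*H_X+p_Y^*H_Y$, which is ample on $X\times Y$. Since $(f\times g)^{m*}H=p_X^*(f^{m*}H_X)+p_Y^*(g^{m*}H_Y)$, writing $a:=\dim X$ and $b:=\dim Y$ and expanding $(f\times g)^{m*}H\cdot H^{a+b-1}$ by the binomial theorem, the K\"unneth-type observation that $p_X^*\alpha\cdot p_Y^*\beta$ has nonzero top-degree contribution on $X\times Y$ only when the $p_X^*$-factors contribute total codimension exactly $a$ (and the $p_Y^*$-factors total codimension exactly $b$) forces
\[
(f\times g)^{m*}H\cdot H^{a+b-1}\;=\;c_1\cdot(f^{m*}H_X\cdot H_X^{a-1})\;+\;c_2\cdot(g^{m*}H_Y\cdot H_Y^{b-1})
\]
for positive constants $c_1,c_2$ depending on $a,b,H_X^a,H_Y^b$. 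Taking $m$-th roots and letting $m\to\infty$ gives $\delta_{f\times g}\le\max(\delta_f,\delta_g)$.

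With the dynamical degree identity in hand, assume without loss of generality that $\delta_f\ge\delta_g$, so that $\delta_{f\times g}=\delta_f$. The first projection $p_X:X\times Y\to X$ is a surjective morphism satisfying $f\circ p_X=p_X\circ(f\times g)$, the dynamical degrees of $f\times g$ and $f$ agree, and KSC holds for $f$ by hypothesis. Lemma~\ref{lem-ksc-iff}(2) then delivers KSC for $f\times g$, finishing the proof.

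The main obstacle is the upper bound $\delta_{f\times g}\le\max(\delta_f,\delta_g)$, which is the only step that genuinely uses the product structure of $X\times Y$: one must track which monomials in the expansion of $(f\times g)^{m*}H\cdot H^{a+b-1}$ actually survive as nonzero intersection numbers on the product. An equivalent but less hands-on route is to invoke the standard product formula for first dynamical degrees from complex dynamics (cf.~\cite{DS17}) after reducing to the smooth complex setting via equivariant resolutions of $X$ and $Y$; once the dynamical degree is computed, the remainder of the argument is purely formal via the pullback principle of Lemma~\ref{lem-ksc-iff}(2).
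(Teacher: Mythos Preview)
Your argument is correct. The paper itself gives no proof of this lemma, simply citing \cite[Lemma 3.2]{San}, so there is nothing to compare against directly. Your route---establishing $\delta_{f\times g}=\max(\delta_f,\delta_g)$ from the intersection-theoretic definition of the dynamical degree and then applying Lemma~\ref{lem-ksc-iff}(2) through the projection onto the factor with the larger dynamical degree---is a clean and natural way to recover the cited result. The intersection computation is easily justified: in the binomial expansion of $H^{a+b-1}$ only the terms $(p_X^*H_X)^i(p_Y^*H_Y)^{a+b-1-i}$ with $i\in\{a-1,a\}$ survive, since $(p_X^*H_X)^i=p_X^*(H_X^i)=0$ for $i>a$ and likewise on the $Y$ side, which yields your displayed identity. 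Incidentally, your argument only uses the KSC hypothesis on the factor whose dynamical degree realizes the maximum, which is slightly stronger than what the lemma states.
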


\begin{proposition}\label{prop-ksc-kappa(D)>=0}(cf.~\cite[Proposition 3.6]{Mat})
Let $f:X\to X$ be a surjective endomorphism of a normal projective variety $X$.
Suppose $f^*D\sim_{\Q}\delta_f D$ for some effective $\Q$-Cartier divisor with Iitaka dimension $\kappa(X,D)>0$.
Then KSC holds for $f$.
\end{proposition}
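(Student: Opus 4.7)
The plan is to reduce KSC for $f$ on $X$ to KSC for a polarized endomorphism on the Iitaka base of $D$, and then invoke Lemma \ref{lem-ksc-iff}(2) together with the polarized case \cite[Theorem 5]{KS-TAMS}. If $\delta_f = 1$ the sandwich $1 \le \alpha_f(x) \le \delta_f = 1$ gives the conclusion immediately, so I assume $\delta_f > 1$. The hypothesis $\kappa(X,D) > 0$ prevents any positive multiple of $D$ from lying in $\Pic^0_{\Q}(X)$ (every class there has Iitaka dimension at most zero), so $[D]$ is a nonzero class in $\N^1(X)$. The relation $f^*[D] = \delta_f [D]$ then exhibits $\delta_f$ as an eigenvalue of the integer matrix $f^*|_{\NS(X)/\mathrm{tor}}$ admitting a rational eigenvector; being simultaneously rational and an algebraic integer, $q := \delta_f$ lies in $\Z_{\ge 2}$. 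Replacing $D$ by a positive integer multiple, I may further assume $D$ is Cartier and $f^*D \sim qD$ as Cartier divisors.

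Next, I exploit the Iitaka fibration of $D$. Choose $m$ large and sufficiently divisible so that $\phi = \phi_{|mD|} : X \dashrightarrow Y$ realizes the Iitaka fibration, with $\dim Y = \kappa(X,D) \ge 1$ and $Y = \Proj R^{(m)}$ (normalized if needed), where $R = \bigoplus_{k \ge 0} H^0(X, kD)$, so that $\OO_Y(1)$ is ample. The pullbacks $f^* : H^0(X, kmD) \to H^0(X, kmqD)$ assemble into a degree-preserving graded homomorphism $R^{(m)} \to (R^{(m)})^{(q)}$, which on $\Proj$ gives a morphism $g : Y \to Y$ satisfying $g^*\OO_Y(1) \cong \OO_Y(q)$ and $g \circ \phi = \phi \circ f$ as rational maps. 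Thus $g$ is $q$-polarized, $\delta_g = q = \delta_f$, and KSC holds for $g$ by the polarized case. Lemma \ref{lem-ksc-iff}(2) applied to $\phi$, $f$, $g$ then transfers KSC back to $f$.

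The main technical obstacle is realizing $g$ as a genuine polarized morphism on a concrete projective model of the Iitaka base, rather than only a self-map of the abstract section ring. One must choose $m$ large enough that $R^{(m)}$ is generated in degree one and that $Y = \Proj R^{(m)}$ is a normal projective variety with $\OO_Y(1)$ very ample, and then verify that the degree-preserving graded map induced by $f^*$ descends to a well-defined morphism between the associated Proj schemes. Alternatively — the route I would take if these issues become delicate — one may first pass to an $f$-equivariant resolution of $X$ on which $\phi$ becomes a morphism (permitted by Lemma \ref{lem-ksc-iff}(1)), and extract the polarized structure of $g$ from the equivariance of that morphism together with the eigen-relation $f^*D \sim qD$.
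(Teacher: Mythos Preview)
The paper does not prove this proposition; it is quoted from \cite[Proposition~3.6]{Mat}. Your strategy is reasonable in spirit, but it has a genuine gap that neither of your proposed fixes closes.

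The graded homomorphism $R^{(m)}\to (R^{(m)})^{(q)}$ you build from $f^*$ yields on $\Proj$ only a \emph{rational} self-map $g:Y\dashrightarrow Y$. Unwinding, $g$ is defined at $y\in Y$ precisely when some $f^*s$ (with $s\in R_m$) does not vanish at $y$, i.e.\ when the generic point of the $\phi$-fibre over $y$ is mapped by $f$ outside the base locus of $|mD|$; for special $y$ this can fail. The eigen-relation $g^*\OO_Y(1)\cong\OO_Y(q)$ alone does not force regularity --- the Cremona involution of $\PP^2$ already satisfies $g^*\OO(1)\cong\OO(2)$. Since Lemma~\ref{lem-ksc-iff}(2) needs $g$ to be a surjective \emph{endomorphism}, you cannot apply it. Your fallback, an $f$-equivariant resolution on which $\phi$ becomes a morphism, fails too: for non-isomorphic $f$ such a resolution need not exist, a point the paper itself stresses in the introduction. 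Separately, the section ring $R$ need not be finitely generated, so $\Proj R^{(m)}$ may not even be a projective variety.

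The standard fix avoids descending $f$ altogether. Form the canonical height $\hat h_D$ with $\hat h_D\circ f=q\,\hat h_D$ and $\hat h_D=h_D+O(1)$, and resolve $\phi_{|mD|}$ by $\pi:X'\to X$, $\psi:X'\to Y$ with $\pi^*(mD)\sim\psi^*A+E$ for $A$ ample on $Y$ and $E\ge 0$. If $\hat h_D(x)\le 0$ while $O_f(x)$ is dense, then along the infinitely many $n$ with $f^n(x)\notin\pi(\Supp E\cup\Exc(\pi))$ one has
\[
h_A\bigl(\psi(\pi^{-1}f^n(x))\bigr)\;\le\; m\,\hat h_D(f^n(x))+O(1)\;=\;m\,q^n\hat h_D(x)+O(1)\;\le\;O(1),
\]
so by Northcott these image points lie in a finite set $S\subset Y$, forcing $O_f(x)\subseteq \pi(\Supp E\cup\Exc(\pi))\cup\pi(\psi^{-1}S)$, a proper closed subset --- contradiction. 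Hence $\hat h_D(x)>0$, and the elementary bound $h_D\le c\,h_H+O(1)$ for ample $H$ gives $\alpha_f(x)\ge q=\delta_f$. If you prefer your geometric route, the missing ingredient is the Chow reduction (Proposition~\ref{prop-nak} and Theorem~\ref{thm-chow-equiv}), which is exactly the device the paper uses in \S\ref{sec-kappa>0} to promote a rational $g$ to a surjective endomorphism in the harder numerical-equivalence setting of Proposition~\ref{main-thm-caseB}.
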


\begin{theorem}\label{thm-ksc-qa}(cf.~\cite[Theorem 2]{Sil17})
Let $X$ be a $Q$-abelian variety, i.e., it has a quasi \'etale cover by an abelian variety.
Then KSC holds for any surjective endomorphism of $X$.
\end{theorem}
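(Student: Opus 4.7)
The plan is to reduce KSC for the $Q$-abelian variety $X$ to the known abelian-variety case \cite[Theorem 2]{Sil17} via an equivariant lift combined with Lemma \ref{lem-ksc-iff}(2). Fix a quasi-\'etale cover $\pi: A \to X$ with $A$ an abelian variety, and let $f: X \to X$ be a surjective endomorphism.

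The main step is to produce, after replacing $f$ by some iterate $f^N$, a surjective endomorphism $\tilde f: A \to A$ with $\pi\circ\tilde f = f^N\circ\pi$. To this end I would form the normalized fibered product $W$ of the two maps $f\circ\pi: A\to X$ and $\pi:A\to X$. The projection $q_1: W\to A$ onto the first factor is a base change of $\pi$, hence quasi-\'etale; since $A$ is smooth, Zariski--Nagata purity upgrades $q_1$ to an \'etale morphism. Each connected component of $W$ is therefore an \'etale cover of $A$ of degree at most $\deg\pi$, and so itself an abelian variety isogenous to $A$. Because the set of isomorphism classes of \'etale covers of $A$ of bounded degree is finite (classical via $\pi_1^{\text{\'et}}(A)\cong \widehat{\Z}{}^{2g}$), iterating the construction with $f^n$ in place of $f$ and applying the pigeonhole principle yields an $N$ for which a component of the resulting fibered product is isomorphic to $A$ itself. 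The other projection of that component supplies the desired lift $\tilde f: A\to A$.

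With the lift in hand, Silverman's theorem \cite[Theorem 2]{Sil17} gives KSC for $\tilde f$ on the abelian variety $A$. Since $\pi$ is finite, in particular generically finite, Lemma \ref{lem-d1-bir} yields $\delta_{\tilde f}=\delta_{f^N}$, and Lemma \ref{lem-ksc-iff}(2) then transfers KSC from $\tilde f$ down to $f^N$. The scaling identities $\alpha_{f^N}(x)=\alpha_f(x)^N$ and $\delta_{f^N}=\delta_f^N$ recorded in Definitions \ref{def-d1} and \ref{def-a-deg} show that KSC for $f$ is equivalent to KSC for $f^N$, which finishes the argument.

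The hard part is the lifting step. The naive fibered product delivers only a self-correspondence on $A$, or at best a self-map between possibly non-isomorphic isogenous abelian varieties; extracting an honest endomorphism $\tilde f$ of $A$ (at the cost of replacing $f$ by a power) is precisely where the finiteness of bounded-degree \'etale covers of $A$ and the pigeonhole principle are needed. Everything after the lift is a purely formal application of the tools already developed in the paper.
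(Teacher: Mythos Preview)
Your overall strategy---lift to an abelian cover, apply Silverman's theorem for abelian varieties, then descend via Lemma~\ref{lem-ksc-iff}---is exactly the paper's. The paper, however, does not attempt to construct the lift by hand: it simply cites \cite{Na-Zh} or \cite[Corollary~8.2]{CMZ} for the existence of a finite surjective morphism $\pi:A\to X$ from an abelian variety $A$ to which the given endomorphism of $X$ lifts (without passing to an iterate).

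Your fibered-product construction is a reasonable start toward that lifting, but the pigeonhole step has a genuine gap. Knowing that every component of $W_n$ is an \'etale cover of $A$ of degree at most $\deg\pi$, and that there are only finitely many isomorphism classes of such covers, pigeonhole yields indices $m<n$ with isomorphic components; it does \emph{not} produce an $N$ for which some component $C\subseteq W_N$ has $q_1|_C:C\to A$ of degree one. That degree-one condition is precisely what you need to extract an honest endomorphism of $A$ via $q_2\circ (q_1|_C)^{-1}$. An abstract isomorphism $C\cong A$ is not enough, since an abelian variety admits plenty of non-trivial self-isogenies. A correct argument either runs a tower of quasi-\'etale covers of $X$ (not of $A$) and uses finiteness there to land on a cover carrying a self-map, or---as the cited references do---identifies a canonical abelian cover to which every surjective endomorphism of $X$ lifts.

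One small separate point: you invoke Lemma~\ref{lem-ksc-iff}(2) to pass KSC from $\tilde f$ down to $f^N$, but part~(2) goes the other way (KSC downstairs $\Rightarrow$ KSC upstairs). Since $\pi$ is finite, hence generically finite, part~(1) applies and gives the two-sided equivalence you actually want; this is also what the paper uses.
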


\begin{proof}
Let $g:X\to X$ be a surjective endomorphism.
There exists a finite surjective morphism $\pi:A\to X$ with $A$ being an abelian variety, such that $g$ lifts to a surjective endomorphism $f:A\to A$
(cf.~\cite{Na-Zh} or \cite[Corollary 8.2]{CMZ}).
Then the result follows from \cite[Theorem 2]{Sil17} and Lemma \ref{lem-ksc-iff}.
\end{proof}

\section{Pullback Action on $\Pic(X)$}
In this section, we discuss the relation between $f^*|_{\Pic^0(X)}$ and $f^*|_{\N^1(X)}$.
\begin{proposition}\label{prop-eig-sqrt-abe}
	Let $f:A\to A$ be an isogeny of an abelian variety $A$.
	Denote by $A_{\C}:=A\otimes_{\mathbb{Z}}\C$ and $f_{\C}:A_{\C}\to A_{\C}$ the induced linear map.
	Let $\lambda$ be an eigenvalue of $f_{\C}$.
	Then $\iota_f\le |\lambda|^2\le \delta_f$
	(cf. Definition \ref{def-d1}).
\end{proposition}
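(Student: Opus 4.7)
The plan is as follows. Write $A=V/\Lambda$ with $V=T_0A$ of complex dimension $g$, and let $T:V\to V$ be the $\C$-linear map induced by $f$, with eigenvalues $\lambda_1,\dots,\lambda_g$. Under the canonical identification $\Lambda\otimes_{\Z}\C=V\oplus\bar V$, the map $f_{\C}$ acts with eigenvalues $\{\lambda_i\}\cup\{\bar\lambda_i\}$; hence for every eigenvalue $\lambda$ of $f_{\C}$ there exists some $i$ with $|\lambda|^2=|\lambda_i|^2$. It therefore suffices to prove the sharp identities
$$\delta_f=\bigl(\max_i|\lambda_i|\bigr)^2,\qquad\iota_f=\bigl(\min_i|\lambda_i|\bigr)^2.$$

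One direction of each identity follows at once from the Hodge decomposition: by the Lefschetz $(1,1)$-theorem, $\N^1(A)\otimes_{\R}\C$ is an $f^*$-invariant subspace of $H^{1,1}(A,\C)\cong H^{1,0}\otimes H^{0,1}$, and $f^*$ acts on $H^{1,1}(A,\C)$ with eigenvalues $\lambda_i\bar\lambda_j$ for $1\le i,j\le g$. This immediately yields $\delta_f\le(\max_i|\lambda_i|)^2$ and $\iota_f\ge(\min_i|\lambda_i|)^2$.

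For the reverse inequalities I use the Appell--Humbert parametrization: for an ample polarization $L$ with corresponding positive-definite Hermitian form $F$ on $V$, the endomorphism $f^*$ acts on $\N^1(A)$ via $H\mapsto T^{*}HT$ at the level of Hermitian forms, so the standard intersection-number formula on an abelian variety gives
$$(f^m)^*L\cdot L^{g-1}\;=\;c\cdot \mathrm{tr}\bigl(F^{-1}(T^{*})^m F T^m\bigr)\;=\;c\cdot\bigl\|F^{1/2}T^mF^{-1/2}\bigr\|_{\mathrm{HS}}^{\,2}$$
with a constant $c>0$ independent of $m$. Since $F^{1/2}TF^{-1/2}$ is similar to $T$, Gelfand's formula gives $\delta_f=\lim_m\bigl((f^m)^*L\cdot L^{g-1}\bigr)^{1/m}=(\max_i|\lambda_i|)^2$. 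For $\iota_f$, note that $f^*$ is invertible on $\N^1(A)$ (as $f$ is an isogeny), and the projection formula $f_*f^*=\deg(f)\cdot\mathrm{id}$ gives $f_*=\deg(f)\cdot(f^*)^{-1}$, so $\iota_f=\deg(f)/\rho(f_*|_{\N^1(A)})$. Under Appell--Humbert, $f_*$ corresponds to $H\mapsto\deg(f)\cdot T^{-*}HT^{-1}$, and the identical computation with $T$ replaced by $T^{-1}$ yields $\rho(f_*|_{\N^1(A)})=\deg(f)/(\min_i|\lambda_i|)^2$, as required.

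The main technical point is to make the Appell--Humbert dictionary precise and verify the intersection-number-to-trace identity on $A$; both are standard facts about abelian varieties, after which the result reduces to Gelfand's spectral-radius formula applied to $T$ and $T^{-1}$.
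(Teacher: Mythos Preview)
Your proof is correct and in fact establishes the sharper identities $\delta_f=(\max_i|\lambda_i|)^2$ and $\iota_f=(\min_i|\lambda_i|)^2$, but it takes a substantially different and more laborious route than the paper. The paper's argument is five lines: any eigenvalue $\lambda$ of $f_{\C}$ is a root of the characteristic polynomial $P_f$ of $f^*|_{H^1(A,\Z)}$ (because $P_f(f)=0$ in $\operatorname{End}(A)$, hence $P_f(f_{\C})=0$), so $\lambda$ is an eigenvalue of $f^*|_{H^1(A,\C)}$; then $|\lambda|^2=\lambda\bar\lambda$ is an eigenvalue of $f^*|_{H^{1,1}(A,\R)}$, and the bounds follow immediately from the fact, recorded in Definition~\ref{def-d1} with reference to Dinh--Sibony, that for smooth complex $X$ the numbers $\delta_f$ and $\iota_f$ coincide with the extremal eigenvalues of $f^*$ on $H^{1,1}(X,\R)$.

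You avoid invoking this last black-box fact and instead recover it in the abelian-variety case via the explicit Appell--Humbert computation of $(f^m)^*L\cdot L^{g-1}$ as a Hilbert--Schmidt norm, followed by Gelfand's formula applied to $T$ and $T^{-1}$. This is self-contained and yields exact formulas, which is a genuine advantage. Two remarks, however: first, your ``Hodge direction'' inequalities $\delta_f\le(\max_i|\lambda_i|)^2$ and $\iota_f\ge(\min_i|\lambda_i|)^2$ are superfluous for the proposition---only the Appell--Humbert direction matters, since $|\lambda|^2\le(\max_j|\lambda_j|)^2\le\delta_f$ and $|\lambda|^2\ge(\min_j|\lambda_j|)^2\ge\iota_f$ already give the claim. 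Second, your $\iota_f$ step implicitly uses that $\rho(f_*|_{\N^1(A)})=\lim_m\bigl((f_*)^mL\cdot L^{g-1}\bigr)^{1/m}$ for ample $L$; this is true because $f_*$ preserves the nef cone and $L$ is interior to it (Perron--Frobenius), but it is worth making explicit since $f_*$, unlike $f^*$, is not a priori covered by the usual dynamical-degree formalism.
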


\begin{proof}
	After embedding the defining field of $A$ and $f$ in $\C$, we may assume that $A$ is defined over $\C$.
	Suppose $f_{\C}(x)=\lambda x$ for some $\lambda\neq 0$ and $0\neq x\in A_{\C}$.
	Let $P_f\in \mathbb{Z}[t]$ be the characteristic polynomial of $f^*|_{H^1(A,\mathbb{Z})}$.
	Then $P_f(f)=0$ and hence $P_f(f_{\C})=(P_f(f))_{\C}=0$.
	In particular, $P_f(f_{\C})(x)=0$.
	Then $\lambda$ is a root of $P_f$ and hence an eigenvalue of $f^*|_{H^1(A,\mathbb{Z})}$.
	Therefore, $|\lambda|^2$ is an eigenvalue of $f^*|_{H^{1,1}(A,\mathbb{R})}$.
The proposition is proved.
\end{proof}

\begin{lemma}\label{lem-eig-dual}
	Let $f:A\to A$ be a surjective endomorphism of an abelian variety $A$.
	Let $f^{\vee}:A^{\vee}\to A^{\vee}$ be the dual endomorphism of the dual abelian variety $A^{\vee}:=\Pic^0(A)$.
	Then $\delta_f=\delta_{f^{\vee}}$ and $\iota_f=\iota_{f^{\vee}}$.
\end{lemma}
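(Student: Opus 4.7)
The plan is to compute $\delta_f,\iota_f,\delta_{f^\vee},\iota_{f^\vee}$ in terms of the eigenvalues of pullback on $H^{1,0}$, and then use Weil/Poincar\'e duality to match the two eigenvalue sets. After embedding the defining field into $\mathbb{C}$ as in Proposition \ref{prop-eig-sqrt-abe}, let $\lambda_1,\dots,\lambda_g$ be the eigenvalues of $f^*$ on $H^{1,0}(A)$, so those on $H^{0,1}(A)=\overline{H^{1,0}(A)}$ are $\overline{\lambda_1},\dots,\overline{\lambda_g}$. Since $H^{1,1}(A,\mathbb{C})\cong H^{1,0}(A)\otimes H^{0,1}(A)$, the eigenvalues of $f^*$ on $H^{1,1}(A,\mathbb{R})$ are the $g^2$ products $\lambda_i\,\overline{\lambda_j}$. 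By the characterization of $\delta_f$ and $\iota_f$ as the largest and smallest absolute value of such eigenvalues (Definition \ref{def-d1}), this yields
\[
\delta_f=\bigl(\max_i |\lambda_i|\bigr)^{2},\qquad \iota_f=\bigl(\min_i |\lambda_i|\bigr)^{2},
\]
and analogously $\delta_{f^\vee}=(\max_i|\mu_i|)^{2}$, $\iota_{f^\vee}=(\min_i|\mu_i|)^{2}$, where $\mu_1,\dots,\mu_g$ denote the eigenvalues of $(f^\vee)^*$ on $H^{1,0}(A^\vee)$.

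To match the two eigenvalue sets, I would use $A^\vee=\Pic^0(A)$ together with the exponential sequence, which give $\operatorname{Lie}(A^\vee)=H^1(A,\mathcal{O}_A)$ with period lattice $H_1(A^\vee,\mathbb{Z})\cong H^1(A,\mathbb{Z})$ by Appell--Humbert. Since by construction $f^\vee$ acts on $\Pic^0(A)$ as $[L]\mapsto[f^*L]$, the induced endomorphism $(f^\vee)_*$ on $H_1(A^\vee,\mathbb{Z})$ agrees with $f^*|_{H^1(A,\mathbb{Z})}$. Dualizing by Poincar\'e duality on $A^\vee$ then identifies $(f^\vee)^*$ on $H^1(A^\vee,\mathbb{Z})\cong H_1(A,\mathbb{Z})$ with the transpose of $f^*$ on $H^1(A,\mathbb{Z})$, so the two operators share the same characteristic polynomial. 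In particular
\[
\{\mu_1,\dots,\mu_g,\overline{\mu_1},\dots,\overline{\mu_g}\}=\{\lambda_1,\dots,\lambda_g,\overline{\lambda_1},\dots,\overline{\lambda_g}\}
\]
as multisets in $\mathbb{C}$, forcing $\{|\mu_i|\}_{i=1}^{g}=\{|\lambda_i|\}_{i=1}^{g}$; combining with the previous paragraph gives $\delta_f=\delta_{f^\vee}$ and $\iota_f=\iota_{f^\vee}$.

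The main technical point to verify is the Appell--Humbert/Weil-pairing compatibility of $f^\vee$ with the canonical isomorphism $H_1(A^\vee,\mathbb{Z})\cong H^1(A,\mathbb{Z})$, i.e.\ that the functoriality of $\Pic^0$ in $f$ does translate into the transpose action on the lattice side. Once this is in place, the rest is a direct bookkeeping of Hodge eigenvalues parallel to the proof of Proposition \ref{prop-eig-sqrt-abe}.
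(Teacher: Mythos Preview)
Your argument is correct, and it reaches the same endpoint as the paper --- that $f^*|_{H^1(A,\mathbb{Z})}$ and $(f^\vee)^*|_{H^1(A^\vee,\mathbb{Z})}$ share the same set of eigenvalues --- but by a different mechanism. The paper first reduces to the case where $f$ is an isogeny (translations act trivially on $\N^1$ and dualize to the identity), and then argues purely inside the endomorphism ring: if $m_f\in\mathbb{Z}[t]$ is the minimal polynomial of $f^*|_{H^1(A,\mathbb{Z})}$, faithfulness of the $H_1$-representation gives $m_f(f)=0$ in $\operatorname{End}(A)$; since $(\cdot)^\vee$ is an additive anti-involution fixing $\mathbb{Z}$, one gets $m_f(f^\vee)=m_f(f)^\vee=0$, and by symmetry the minimal polynomials coincide. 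This is a two-line algebraic trick that avoids any explicit lattice identification.

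Your route instead unpacks the Appell--Humbert description of $A^\vee$ to realize $(f^\vee)_*|_{H_1(A^\vee,\mathbb{Z})}$ as literally $f^*|_{H^1(A,\mathbb{Z})}$, then dualizes. This is more hands-on and arguably more transparent about \emph{why} the eigenvalues match, at the cost of invoking heavier machinery (the exponential sequence, the period lattice identification, and the compatibility you flag at the end). One small point: you do not explicitly reduce to isogenies, and the $\operatorname{Pic}^0$ functoriality you invoke is cleanest for homomorphisms; it would be worth a sentence noting that translations act trivially on both $H^{1,0}$ and $\operatorname{Pic}^0$, so the general case reduces immediately.
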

\begin{proof}
We may replace the base field by $\C$.
Note that the dual of a translation is still a translation and the pullback action of a translation on $\N^1(A)$ is always an identity.
So we may assume that $f$ is an isogeny.
Let $m_f\in \mathbb{Z}[t]$ be the minimal polynomial of $f^*|_{H^1(A,\mathbb{Z})}$.
Then $m_f(f)=0$ and $m_f(f^{\vee})=m_f(f)^{\vee}=0$.
A dual argument shows that $m_f$ is also the minimal polynomial of $(f^{\vee})^*|_{H^1(A^{\vee},\mathbb{Z})}$.
Therefore, $f^*|_{H^{1,1}(A,\R)}$ and $(f^{\vee})^*|_{H^{1,1}(A^{\vee},\R)}$ have the same eigenvalues.
The lemma is proved.
\end{proof}

\begin{proposition}\label{prop-eig-sqrt}
	Let $f:X\to X$ be a surjective endomorphism of a normal projective variety $X$ whose Albanese morphism is surjective.
	Let $\lambda$ be an eigenvalue of $f^*|_{\Pic^0_{\C}(X)}$.
	Then $\iota_f\le |\lambda|^2\le \delta_f$.
\end{proposition}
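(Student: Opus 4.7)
The plan is to reduce the problem to the abelian variety $A = \Alb(X)$ via the Albanese morphism, and then invoke Proposition~\ref{prop-eig-sqrt-abe} together with Lemma~\ref{lem-eig-dual}. Let $a := \alb_{X} \colon X \to A$ be the Albanese, which is surjective by hypothesis. The universal property of the Albanese produces a surjective endomorphism $g := \Alb(f) \colon A \to A$ satisfying $g \circ a = a \circ f$ (after replacing $f$ by a positive power, which is harmless here, to absorb a possible translation). For $X$ normal projective, the classical $\Alb$/$\Pic^{0}$ duality $\Alb(X) = \Pic^{0}(X)^{\vee}$ (on reduced identity components) yields an isogeny $a^{*} \colon \Pic^{0}(A) \to \Pic^{0}(X)$ intertwining $g^{*}$ with $f^{*}$; consequently every eigenvalue of $f^{*}|_{\Pic^{0}_{\C}(X)}$ is also an eigenvalue of $g^{*}|_{\Pic^{0}_{\C}(A)}$, so in particular the given $\lambda$ is.

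Next, I would pass to the dual abelian variety. Under the canonical identification $\Pic^{0}(A) = A^{\vee}$, the pullback $g^{*}|_{\Pic^{0}(A)}$ is tautologically the dual morphism $g^{\vee} \colon A^{\vee} \to A^{\vee}$; writing $g = t \circ \phi$ for a translation $t$ (which acts trivially on $\Pic^{0}$) and an isogeny $\phi$ gives $g^{\vee} = \phi^{\vee}$, which is an isogeny to which Proposition~\ref{prop-eig-sqrt-abe} applies. It yields $\iota_{g^{\vee}} \le |\lambda|^{2} \le \delta_{g^{\vee}}$, and Lemma~\ref{lem-eig-dual} then converts this to $\iota_{g} \le |\lambda|^{2} \le \delta_{g}$.

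To close the loop, I would compare these degrees with those of $f$. Because $a$ is surjective, the pullback $a^{*} \colon \N^{1}(A) \hookrightarrow \N^{1}(X)$ is injective and intertwines $g^{*}$ with $f^{*}$, identifying $\N^{1}(A)$ with an $f^{*}$-stable subspace of $\N^{1}(X)$ on which $f^{*}$ acts as $g^{*}$. Therefore every eigenvalue of $g^{*}|_{\N^{1}(A)}$ is an eigenvalue of $f^{*}|_{\N^{1}(X)}$, forcing $\iota_{f} \le \iota_{g}$ and $\delta_{g} \le \delta_{f}$. Chaining everything gives $\iota_{f} \le \iota_{g} \le |\lambda|^{2} \le \delta_{g} \le \delta_{f}$, which is the assertion.

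The main obstacle I expect is the first step: verifying that $a^{*} \colon \Pic^{0}(A) \to \Pic^{0}(X)$ is an isogeny when $X$ is only normal. For smooth projective varieties this is classical Poincar\'e duality between Albanese and Picard; in the normal case one has to work with reduced identity components of the Picard scheme and compare with a resolution $\pi \colon \tilde{X} \to X$, using that the Albanese is a birational invariant for smooth projective varieties to identify $\Alb(\tilde{X}) = \Alb(X)$, and controlling the kernel and cokernel of $\pi^{*}$ on $\Pic^{0}$ along the resolution. Once this identification is in place, the rest of the argument is a formal manipulation of pullbacks and eigenvalues.
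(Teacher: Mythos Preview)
Your approach is essentially identical to the paper's: reduce to the Albanese $A$, identify $f^*|_{\Pic^0(X)}$ with (the isogeny part of) $g^\vee$ on $A^\vee$, apply Proposition~\ref{prop-eig-sqrt-abe} and Lemma~\ref{lem-eig-dual}, and then use surjectivity of the Albanese map to sandwich between $\iota_f$ and $\delta_f$. Two minor remarks: replacing $f$ by a power does not in general absorb the translation part of $g$ (and is unnecessary, since you correctly decompose $g = t\circ\phi$ afterwards anyway), and the ``obstacle'' you flag is not one---the identity $\Alb(X) = (\Pic^0(X)_{\red})^\vee$ is the standard construction of the Albanese for normal projective varieties, and the paper simply invokes it in one line.
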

\begin{proof}
	Let $\pi:X\to A$ be the Albanese morphism.
	Note that $A$ is the dual of $\Pic^0(X)$.
	Denote by $g:=f|_A$.
	Then $g^{\vee}=f^*|_{\Pic^0(X)}+a$ for some $a\in \Pic^0(X)$.
	Since $\pi$ is surjective,
	we have $\iota_f\le \iota_g=\iota_{g^{\vee}-a}\le\delta_{g^{\vee}-a}=\delta_g\le \delta_f$ by Lemma \ref{lem-eig-dual}.
Then the result follows from Proposition \ref{prop-eig-sqrt-abe}.
\end{proof}

\begin{proposition}\label{prop-iamp->1}
	Let $f:X\to X$ be an int-amplified endomorphism of a normal projective variety $X$.
	Then all the eigenvalues of $f^*|_{\Pic_{\Q}(X)}$ are of modulus greater than $1$.
\end{proposition}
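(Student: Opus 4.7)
The plan is to decompose the action of $f^*$ on $\Pic_{\Q}(X)$ using the $f^*$-equivariant short exact sequence
\[
0 \to \Pic^0_{\Q}(X) \to \Pic_{\Q}(X) \to \NS_{\Q}(X) \to 0,
\]
which reduces the problem to bounding the eigenvalues of $f^*$ on each of the two end terms separately.

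First I would handle the quotient $\NS_{\Q}(X)$. Because $\NS_{\Q}(X)\otimes_{\Q}\R=\N^1(X)$, the characteristic polynomial of $f^*|_{\NS_{\Q}(X)}$ coincides with that of $f^*|_{\N^1(X)}$, so the two operators share the same spectrum. Since $f$ is int-amplified, the spectral characterization of int-amplified endomorphisms (cf.~\cite{Meng_IAMP}) says that every eigenvalue of $f^*|_{\N^1(X)}$ has modulus strictly greater than $1$; in particular $\iota_f>1$.

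Next, for the subobject $\Pic^0_{\Q}(X)$, the idea is to invoke Proposition \ref{prop-eig-sqrt}. That proposition requires the Albanese morphism $X\to\Alb(X)$ to be surjective, so the first technical step is to verify this for $X$ admitting an int-amplified endomorphism; this is a known structural fact (the int-amplified endomorphism descends to an isogeny on $\Alb(X)$, and the image $a(X)$ is then forced to coincide with $\Alb(X)$ because any proper abelian subvariety containing $a(X)$ would contradict the universal property, or one may simply quote \cite{MZ_PG}). Granting surjectivity, Proposition \ref{prop-eig-sqrt} yields $\iota_f\le|\lambda|^2\le\delta_f$ for every eigenvalue $\lambda$ of $f^*|_{\Pic^0_{\C}(X)}$, and combined with $\iota_f>1$ from the previous paragraph this forces $|\lambda|>1$. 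Since the eigenvalues of $f^*|_{\Pic^0_{\Q}(X)}$ form a subset of those of its $\C$-linear extension, they too all have modulus exceeding $1$.

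The main (mild) obstacle is verifying the hypothesis of Proposition \ref{prop-eig-sqrt}, namely surjectivity of the Albanese morphism; this is a structural input rather than a computation, and once it is available, Proposition \ref{prop-eig-sqrt} together with the spectral characterization of int-amplified endomorphisms closes the argument via the short exact sequence above.
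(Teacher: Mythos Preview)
Your proof is correct and follows essentially the same route as the paper: decompose $\Pic$ via the short exact sequence with $\Pic^0$ and $\NS$, cite \cite[Theorem 1.1]{Meng_IAMP} for the $\NS$-piece, and apply Proposition \ref{prop-eig-sqrt} for the $\Pic^0$-piece. You are in fact slightly more careful than the paper's write-up, which invokes Proposition \ref{prop-eig-sqrt} without explicitly checking its Albanese-surjectivity hypothesis; the correct reference for that surjectivity is \cite[Theorem 1.8]{Meng_IAMP} rather than \cite{MZ_PG}.
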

\begin{proof}
Note that $\NS_{\C}(X)=\Pic_{\C}(X)/\Pic^0_{\C}(X)$ and all the eigenvalues of $f^*|_{\NS_{\C}(X)}$ are of modulus greater than $1$ by \cite[Theorem 1.1]{Meng_IAMP}.
	By Proposition \ref{prop-eig-sqrt}, all the eigenvalues of $f^*|_{\Pic^0_{\C}(X)}$ are of modulus greater than $1$.
The result follows.
\end{proof}

\begin{lemma}\label{lem-pol-pic}(cf.~\cite[Lemma 19]{KS-TAMS})
	Let $f:X\to X$ be a morphism.
	Then there is a monic integral polynomial $P_f(t) \in \mathbb{Z}[t]$ with the property that $P_f(f^*)$ annihilates $\Pic(X)$.
\end{lemma}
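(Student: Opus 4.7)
The plan is to exploit the $f^*$-equivariant short exact sequence
\[
0 \to \Pic^0(X) \to \Pic(X) \to \NS(X) \to 0,
\]
produce a monic integral polynomial annihilating $f^*$ on each of $\NS(X)$ and $\Pic^0(X)$, and then take their product. Concretely, if $P_1(f^*)$ kills $\NS(X)$ and $P_2(f^*)$ kills $\Pic^0(X)$, then for any $D\in \Pic(X)$ the class $P_1(f^*)D$ lies in $\Pic^0(X)$, and hence $P_2(f^*)P_1(f^*)D=0$; so $P_f:=P_1 P_2$ does the job.

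For $\NS(X)$ the argument is pure linear algebra over $\mathbb{Z}$. Since $\NS(X)$ is a finitely generated $\mathbb{Z}$-module, I would pick generators $[D_1],\dots,[D_n]$ and write $f^*[D_i]=\sum_j a_{ij}[D_j]$ with $a_{ij}\in\mathbb{Z}$; the Cayley--Hamilton theorem for endomorphisms of a finitely generated module over a commutative ring (the Nakayama-style determinantal identity) then gives that the monic polynomial $P_1(t):=\det(tI-(a_{ij}))\in\mathbb{Z}[t]$ satisfies $P_1(f^*)=0$ on $\NS(X)$.

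For $\Pic^0(X)$ I would use the theory of the Picard scheme: its identity component is an abelian variety $P$ of some dimension $g$, and the pullback $f^*$ is induced by an endomorphism $\varphi\in\End(P)$ of group schemes. I would then take $P_2(t)\in\mathbb{Z}[t]$ to be the characteristic polynomial of $\varphi$ acting on the rational $\ell$-adic Tate module $V_\ell P$ (equivalently on $H^1_{\mathrm{\acute et}}(P_{\bar k},\mathbb{Q}_\ell)$); this is a monic integral polynomial of degree $2g$, independent of $\ell$, and the classical Cayley--Hamilton theorem for abelian varieties (Weil, Mumford \emph{Abelian Varieties}, \S 19) asserts that $P_2(\varphi)=0$ in $\End(P)$, hence kills $\Pic^0(X)=P(k)$.

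The main obstacle is the step for $\Pic^0(X)$: one has to know that $f^*$ on $\Pic^0(X)$ actually descends from a genuine endomorphism of the Picard variety, and one must invoke the Cayley--Hamilton theorem for abelian varieties rather than na\"ive matrix Cayley--Hamilton (since $\Pic^0(X)$ is not finitely generated as an abstract group and so Step~1's argument does not apply). Assuming this input is in hand, the rest is the straightforward extension argument indicated above.
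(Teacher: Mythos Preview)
Your argument is correct and is precisely the standard one. The paper does not give a proof of this lemma; it simply records the statement and defers to \cite[Lemma~19]{KS-TAMS}, whose proof is exactly the two-step argument you outline: Cayley--Hamilton on the finitely generated $\NS(X)$, Cayley--Hamilton for the endomorphism of the abelian variety $\Pic^0(X)$, and then the product $P_1P_2$ via the $f^*$-equivariant exact sequence.
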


\begin{definition}\label{def-vf}
Let $f:X\to X$ be a surjective endomorphism of a projective variety $X$.
Let $D\in \Pic_{\R}(X)$.
Denote by $V_f(D)$ the subspace of $\Pic_{\R}(X)$ spanned by $\{(f^m)^*D\}_{m\ge 0}$.
Denote by $\EFF_{f}(D)$ the convex cone of effective $\R$-Cartier divisors in $V_{f}(D)$.
Note that $\EFF_{f}(D)$ does not contain any line.
However, the closure of $\EFF_f(D)$ may contain lines.
\end{definition}

We need the following to show the effectiveness of anti-canonical divisor in Section \ref{sec-eff}.

\begin{proposition}\label{prop-vf}
Let $f:X\to X$ be a surjective endomorphism of a projective variety $X$.
Then the following hold.
\begin{itemize}
\item[(1)] For any $D\in \Pic_{\R}(X)$, $V_f(D)$ and $\EFF_{f}(D)$ are finite dimensional and $f^*|_{\Pic_{\R}(X)}$-invariant.
\item[(2)] $f_*f^*=f^*f_*=(\deg f)\id$ on $\Pic_{\R}(X)$.
\end{itemize}
\end{proposition}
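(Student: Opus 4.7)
My plan for (1) is to apply Lemma~\ref{lem-pol-pic} to fix a monic $P_f(t) = t^d + \sum_{i<d}a_i t^i \in \Z[t]$ with $P_f(f^*) = 0$ on $\Pic(X)$, and hence on $\Pic_\R(X)$. The resulting identity $(f^*)^d = -\sum_{i<d}a_i(f^*)^i$ on $\Pic_\R(X)$ shows, by induction on $m$, that every $(f^*)^m D$ lies in the $\R$-span of $D, f^*D,\ldots,(f^*)^{d-1}D$, so $V_f(D)$ has dimension at most $d$ and is $f^*$-stable by construction. Since $\EFF_f(D) \subseteq V_f(D)$, it automatically sits in a finite-dimensional space; and because the pullback of an effective $\R$-Cartier divisor by a surjective morphism is again effective, one has $f^*\EFF_f(D) \subseteq \EFF_f(D)$.

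For (2), the equality $f_*f^* = (\deg f)\,\id$ on $\Pic(X)$, and hence on $\Pic_\R(X)$, is the usual projection formula for a finite surjective morphism; in particular $f^*$ is injective on $\Pic_\R(X)$. I plan to upgrade injectivity to bijectivity of $f^*$ on $\Pic_\R(X)$, from which $f_* = (\deg f)(f^*)^{-1}$ and the second equality $f^*f_* = (\deg f)\,\id$ follow tautologically. For surjectivity I would use the short exact sequence $0 \to \Pic^0_\R(X) \to \Pic_\R(X) \to \NS_\R(X) \to 0$, which is exact because $\R$ is flat over $\Z$. On the finite-dimensional quotient $\NS_\R(X) = \N^1(X)$, injectivity of $f^*$ already forces bijectivity. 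On $\Pic^0_\R(X)$, the map $f^*$ is the $\R$-linearization of the dual of the induced endomorphism on $\Alb(X)$; since $f$ is surjective, this endomorphism is a surjection of abelian varieties of the same dimension, hence an isogeny, so its action on $\Pic^0(X)$ has finite kernel $K$. Tensoring the short exact sequence $0 \to K \to \Pic^0(X) \xrightarrow{f^*} \Pic^0(X) \to 0$ with the flat $\Z$-module $\R$ annihilates $K$ and yields an isomorphism on $\Pic^0_\R(X)$; the five-lemma then delivers bijectivity of $f^*$ on $\Pic_\R(X)$.

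The main obstacle will be handling $\Pic^0_\R(X)$: at the integral level $f^*$ is only an isogeny, and the argument hinges on the fact that tensoring with $\R$ kills its finite kernel. If $X$ is only projective (not normal), one can first pass to its automatically $f$-equivariant normalization; the non-abelian-variety part of $\Pic^0$ is linear in nature and again dies upon tensoring with $\R$, so the same conclusion persists.
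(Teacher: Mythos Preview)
Your treatment of (1) matches the paper's.

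For (2), your approach works for normal $X$ but is more involved than necessary. The paper instead reuses (1): since each $V_f(D)$ is finite-dimensional and $f^*$-stable, injectivity of $f^*$ (from $f_*f^* = (\deg f)\id$) forces $f^*|_{V_f(D)}$ to be a bijection; hence $f_*|_{V_f(D)} = (\deg f)(f^*|_{V_f(D)})^{-1}$ commutes with $f^*$. As every class in $\Pic_\R(X)$ lies in its own $V_f(D)$, the identity $f^*f_* = (\deg f)\id$ holds globally. This bypasses the Albanese and the structure of $\Pic^0$ entirely and works uniformly for any projective $X$.

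Your non-normal paragraph contains an actual error: the affine part of $\Pic^0$ does \emph{not} vanish after tensoring with $\R$. If $X$ is a rational curve with one node, then $\Pic^0(X)\cong k^{\times}$ and $k^{\times}\otimes_{\Z}\R\neq 0$. Passing to the normalization $\nu:\widetilde X\to X$ does not help either, since $\nu^*$ on $\Pic_\R$ need not be injective (in that same example $\Pic^0(\widetilde X)=0$), so you cannot read off bijectivity on $X$ from bijectivity on $\widetilde X$. The paper's argument through the finite-dimensional spaces $V_f(D)$ sidesteps this completely. Equivalently, one could observe that a monic annihilator $P_f$ of $f^*$ on $\Pic_\R(X)$ may be taken with $P_f(0)\neq 0$ (repeatedly divide out $t$, using injectivity of $f^*$), whence $(f^*)^{-1}$ is itself a polynomial in $f^*$; this is really the same trick as the paper's, phrased without the cyclic subspaces.
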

\begin{proof}
    By Lemma \ref{lem-pol-pic}, $V_f(D)$ is finite dimensional.
    Clearly, $f^*(V_f(D))\subseteq V_f(D)$.
    By the projection formula, $f_*f^*=(\deg f)\id$ on $\Pic(X)$.
    So $f^*|_{\Pic_{\R}(X)}$ is injective, hence $f_*f^*=f^*f_*$ on $V_f(D)$.
    Note that $f^*D$ is effective if $D$ is effective.
    So $f^*(E_f(D))=E_f(D)$.
\end{proof}

\section{Equivariant Minimal Model Program for Surfaces}\label{sec-sur-emmp}
In this section, we recall the (monoid) equivariant minimal model program for a (possibly singular) normal projective surface admitting a non-isomorphic endomorphism.

\begin{lemma}\label{lem-intersection}
	Let $X$ be a normal projective surface and $C$ an irreducible curve on $X$.
	Then there exists an integer $n_0 >0$ (depending only on $X$) such that $n_0 C\equiv_w D$ (weak numerical equivalence) for some (integral) Cartier divisor $D$.
\end{lemma}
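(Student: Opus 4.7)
The plan is to reduce to the smooth case via a fixed resolution of singularities and then use the finiteness of the cokernel of the intersection-pairing dualization on the N\'eron--Severi lattice.

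First I would fix, once and for all, a resolution $\pi : \widetilde{X} \to X$ with exceptional prime divisors $E_1, \dots, E_k$. Because $\widetilde{X}$ is smooth, every Weil divisor there is Cartier, so for each irreducible curve $C \subset X$ the Mumford pullback $\pi^{*}C = \widetilde{C} + \sum_i a_i E_i$ is well defined as a $\Q$-divisor, with the $a_i \in \Q$ uniquely determined by the linear system $\pi^{*}C \cdot E_j = 0$ for each $j$. The key point is that the coefficient matrix $(E_i \cdot E_j)$ is fixed and negative definite (Mumford), so the denominators of the $a_i$ all divide $|\det(E_i \cdot E_j)|$, an integer depending only on $X$. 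This produces an integer $n_1 = n_1(X)$, independent of $C$, such that $n_1 \pi^{*}C$ is an integral Cartier divisor on $\widetilde{X}$.

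Next I would pass to an integer-valued functional on $\Pic(X)$. By the projection formula, $(n_1 C)\cdot L = (n_1 \pi^{*}C)\cdot \pi^{*}L \in \Z$ for every Cartier divisor $L$ on $X$, so the assignment $\varphi_{C} : L \mapsto (n_1 C)\cdot L$ lies in $\Hom(\Pic(X), \Z)$. Since $\Pic^{0}(X)$ is a divisible group, $\Hom(\Pic(X), \Z) = \Hom(\NS(X), \Z)$, which is free of finite rank, and $\varphi_{C}$ descends to a homomorphism on $\NS(X)/\torsion$.

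For the final step, the intersection pairing on $\NS(X)/\torsion$ for a normal projective surface is nondegenerate (by Hodge index on the smooth model $\widetilde{X}$ combined with the projection formula, which realizes the pairing on the image of $\pi^{*}$ as a sublattice pairing inside the nondegenerate $\N^1(\widetilde{X})$). Hence the tautological map $\NS(X)/\torsion \to \Hom(\NS(X)/\torsion, \Z)$ is an injection of free abelian groups of the same rank, with finite cokernel of some exponent $n_2 = n_2(X)$. Then $n_2 \varphi_{C}$ is represented by the class of some Cartier divisor $D$ on $X$, and setting $n_0 := n_1 n_2$ (depending only on $X$) gives $(n_0 C)\cdot L = D \cdot L$ for every Cartier divisor $L$, i.e.\ $n_0 C \equiv_w D$. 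The main obstacle worth watching is uniformity in $C$: I have to verify that both $n_1$ and $n_2$ can be chosen independently of the curve, but this is automatic since $n_1$ is extracted from a single fixed matrix on $\widetilde{X}$ and $n_2$ is an invariant of the N\'eron--Severi lattice of $X$ alone.
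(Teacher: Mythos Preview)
Your proof is correct, and its second half (the lattice/cokernel argument producing $n_2$) is exactly the paper's argument: the paper picks integral Cartier divisors $D_1,\dots,D_r$ forming a basis of $\N^1(X)$, notes that the intersection matrix $A=(D_i\cdot D_j)$ is invertible by Hodge index, and observes that the unique $\R$-combination $D=\sum a_i D_i$ with $D\cdot D_j=C\cdot D_j$ has $(a_1,\dots,a_r)\in A^{-1}(\Z^r)$, so $n_0=|\det A|$ works. This is precisely your ``exponent of the cokernel'' $n_2$, made explicit.

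The difference is that your first half, producing $n_1$ via a resolution and Mumford pullback, is an unnecessary detour. The integrality $C\cdot L\in\Z$ for an irreducible curve $C$ and an integral Cartier divisor $L$ holds directly on any projective variety, since $C\cdot L=\deg(\mathcal{O}_X(L)|_C)$ is the degree of a line bundle on a curve. The paper uses this fact without comment (``$C\cdot D_i\in\Z$''), so its proof is a single paragraph with $n_0=|\det A|$ rather than your product $n_1 n_2$. Your argument is not wrong, just longer; in particular you may simply take $n_1=1$ and drop the resolution entirely.
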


\begin{proof}
Let $D_1,\cdots, D_r$ be (integral) Cartier divisors which form a basis of $\N^1(X)$.
Denote by $A:=(D_i\cdot D_j)_{1\le i,j\le r}$ the intersection matrix which is invertible by the Hodge index theorem.
Then there is some $D=\sum a_i D_i$ such that $D\cdot D_i=C\cdot D_i\in \Z$ for each $i$.
Since
$(a_1,\cdots,a_r)\in A^{-1}(\mathbb{Z}^r)\subseteq \mathbb{Z}^r/\det(A)$,
we are done by letting $n_0 =\det(A)$.
\end{proof}

Let $X$ be a normal projective surface. By \cite[Lemma 3.2]{Zh-tams}, there is a natural embedding $\N^1(X) \subseteq \N_1(X)$.
Let $C$ and $C'$ be two curves on $X$.
Then we may define $C\cdot C':=D\cdot C'$ for some $D\in \N^1(X)$ with $D\equiv_w C$, which is independent of the choice of $D$ (cf.~Lemma \ref{lem-intersection}).
We say that $C$ has {\it negative self-intersection} if $C^2<0$.
Note that the above intersection coincides with the traditional Mumford intersection for Weil divisors on $X$.
Indeed, given a resolution $\pi:X'\to X$, the Mumford pullback preserves the weak numerical equivalence, i.e., $\pi^*C\equiv_w \pi^*D$, by noting that $\pi^*C\cdot E=0$ for any exceptional prime divisor $E$.
Denote by $R_C := \mathbb{R}_{\ge 0}[C]$ the {\it ray generated by} $[C]$ in $\NE(X)$. Denote by $\Sigma_C$ the {\it union of curves} whose classes are in $R_C$.
Let $f:X\to X$ be a surjective endomorphism.
The projection formula implies that $f(\Sigma_C)=\Sigma_{f(C)}$ and $f^{-1}(\Sigma_C)=\Sigma_{C'}$ for any curve $C'$ with $f(C')=C$; see \cite[Lemma 4.2]{MZ_PG}.

\begin{lemma}\label{lem-surf-exc}
Let $X$ be a normal projective surface with only log canonical (lc) singularities.
Let $\pi:X\to Y$ be a divisorial contraction of some $K_X$-negative extremal ray having the exceptional divisor $E=\sum E_i$ with $E_i$ irreducible.
Then $Y$ has only lc singularities. Further, $E_i^2<0$ and $\Sigma_{E_i}=E$.
\end{lemma}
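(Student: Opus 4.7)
The plan is to handle the three conclusions separately: the claims about $E_i^2$ and $\Sigma_{E_i}$ are formal consequences of extremal-ray geometry, while preservation of log canonical singularities is a standard MMP computation.

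First, since $\pi$ contracts a single extremal ray $R$ of $\NE(X)$ with exceptional divisor $E = \sum E_i$, each $E_i$ is mapped to a point and hence $[E_i]\in R\setminus\{0\}$. As $R$ is one-dimensional, this forces $R_{E_i}=R$ for every $i$. By extremality of $\pi$, an irreducible curve $C'$ satisfies $[C']\in R$ iff $\pi(C')$ is a point iff $C'\subseteq E$. Therefore $\Sigma_{E_i}=E$, independent of $i$. For the self-intersection, since $\pi$ is a projective birational morphism of normal surfaces contracting $E$ to a point, the intersection matrix $(E_i\cdot E_j)_{i,j}$ is negative definite by the classical Mumford/Zariski lemma, the intersection numbers being computed via Lemma \ref{lem-intersection} after passing to Cartier representatives $n_0 E_i \equiv_w D_i$. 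In particular $E_i^2<0$.

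For the preservation of lc, the cone/contraction theorem for lc surfaces (Fujino's MMP for lc surfaces) guarantees that $K_Y$ is $\Q$-Cartier, so we may write $K_X = \pi^*K_Y + \sum a_i E_i$ as an identity of $\Q$-Cartier divisors. Intersection with each $E_j$ yields $(\sum a_i E_i)\cdot E_j = K_X\cdot E_j < 0$, since $[E_j]\in R$ is $K_X$-negative and $\pi^*K_Y\cdot E_j = 0$ by the projection formula. The negativity lemma, applied to the $\pi$-exceptional $\Q$-Cartier divisor $\sum a_i E_i$ having non-positive intersection with every $E_j$, then yields $a_i\ge 0$ for every $i$. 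Consequently, for any prime divisor $F$ over $Y$, passing to a common log resolution $\mu\colon Z\to X$ and expanding $\mu^*E_i$ into strict transform plus $\mu$-exceptional part with non-negative multiplicities shows $a(F;Y) \ge a(F;X) \ge -1$, the last inequality because $X$ is lc. Outside $\pi(E)$ the map $\pi$ is an isomorphism, so $Y$ inherits lc singularities there. Hence $Y$ is lc.

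The main obstacle is the last step, which depends on the cone and contraction theorems for lc surfaces (to ensure $K_Y$ is $\Q$-Cartier) together with the negativity lemma and a careful comparison of discrepancies across a common log resolution. Once these standard tools are invoked, the argument reduces to a short computation. The first two conclusions are immediate from the extremal nature of the contraction and the negative definiteness of the intersection matrix on exceptional curves.
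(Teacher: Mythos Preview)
Your argument is correct and parallels the paper's, with two differences worth noting. For the claim that $Y$ is lc, the paper simply cites \cite[Theorem 3.3]{Fuj12}, whereas you unpack the standard discrepancy comparison; your computation is essentially the content of that citation. For $E_i^2<0$, the paper instead uses the discrepancy relation directly: from $K_X\sim_{\Q}\pi^*K_Y+\sum a_iE_i$ with $a_i>0$ (negativity lemma) it observes $E_1\equiv_w t(\sum a_iE_i)$ for some $t>0$, so $E_1^2=t(\sum a_iE_i)\cdot E_1=t\,K_X\cdot E_1<0$. Your route via negative definiteness of the exceptional intersection matrix is more classical and avoids needing the $a_i$'s for this step.

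One caveat on your phrasing: since every $E_i$ lies in the single extremal ray $R$, their Cartier representatives $D_i$ from Lemma~\ref{lem-intersection} are numerically proportional, so the matrix $(D_i\cdot D_j)$ computed that way has rank one and is not literally negative definite when there is more than one component. The conclusion $E_i^2<0$ is unaffected: it follows at once from the Hodge index theorem applied to the nonzero class $D_i\in(\pi^*H)^{\perp}$ with $(\pi^*H)^2>0$, which is the substance of the Mumford--Zariski statement you invoke.
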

\begin{proof}
$Y$ is lc by \cite[Theorem 3.3]{Fuj12}.
In particular, $K_Y$ is $\Q$-Cartier.

Write $K_X \sim_{\Q} \pi^*K_Y+\sum a_i E_i$.
Since $K_X\cdot E_i<0$, we have $a_i> 0$ for each $i$ by the negativity lemma (cf.~\cite[Lemma 3.39]{KM}).
Note that the rays $R_{E_i}=R_{E_j}$ in $\N_1(X)$ and $\Sigma_{E_i}=E$.
Then $E_1\equiv_w t(\sum a_i E_i)$ for some $t>0$.
Since $(\sum a_i E_i)\cdot E_1=K_X\cdot E_1<0$, we have $E_1^2<0$.
\end{proof}

Let $X$ be a normal projective surface.
Denote by $S(X)$ the set of {\it all irreducible curves} $C$ on $X$ with negative self-intersection and $\Sigma_C$ being a finite union of irreducible curves.

\begin{lemma}\label{lem-sx}
(cf.~\cite{Nak02})
Suppose $X$ is a normal projective surface. Then we have:
	\begin{itemize}
		\item[(1)] The action $\SEnd(X)$ on $S(X)$, via $(f,C)\mapsto f(C)$, is well defined.
		\item[(2)] Suppose $X$ has a non-isomorphic surjective endomorphism. Then  $S(X)$ is finite;
		and $f^{-t_0}(C) = C$ for any $f\in \SEnd(X)$ and $C \in S(X)$
		where $t_0 = |S(X)|!$.
	\end{itemize}
\end{lemma}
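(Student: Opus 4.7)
The plan is to combine the two structural identities $f(\Sigma_C) = \Sigma_{f(C)}$ and $f^{-1}(\Sigma_{f(C)}) = \Sigma_C$ (recalled just before the statement) with self-intersection computations via the projection formula. The main obstacle I anticipate is the finiteness of $S(X)$ under the hypothesis of (2); once this is in hand, the rest is a mechanical orbit-counting exercise.

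For part (1), I take $f \in \SEnd(X)$ and $C \in S(X)$ and verify the three defining properties for $f(C)$. Irreducibility of $f(C)$ is immediate. The identity $\Sigma_{f(C)} = f(\Sigma_C)$ yields that $\Sigma_{f(C)}$ is a finite union of irreducible curves. For negative self-intersection, I observe that every component of $\Supp f^* f(C)$ lies in $f^{-1}(\Sigma_{f(C)}) = \Sigma_C$, whose components all have class on the single ray $R_C$. Hence $[f^*f(C)] = t[C]$ for some $t > 0$ in $\N_1(X)$, and (interpreting self-intersections via Lemma \ref{lem-intersection} and using the projection formula $f_* f^* = (\deg f)\,\id$ from Proposition \ref{prop-vf}(2))
\[
(\deg f)\,[f(C)]^2 \;=\; [f^*f(C)]^2 \;=\; t^2[C]^2 \;<\; 0,
\]
so $f(C) \in S(X)$.

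For part (2), the crucial input is the finiteness of $S(X)$ when $X$ admits a non-isomorphic surjective endomorphism. Here I would invoke the result of Nakayama \cite{Nak02}: the existence of such an endomorphism bounds the number of distinct rays $R_C$ arising from $C \in S(X)$, and each such ray accounts only for the finitely many members of $\Sigma_C$. This is the hard step; everything else is formal.

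Granting finiteness, each $f \in \SEnd(X)$ acts as a \emph{bijection} on $S(X)$. Surjectivity is the dual of the computation in (1): any irreducible component $C'$ of $f^{-1}(C)$ satisfies $\Sigma_{C'} = f^{-1}(\Sigma_C)$ (a finite union), and $[f^*C] = s[C']$ with $s > 0$ together with $(f^*C)^2 = (\deg f)\,[C]^2 < 0$ forces $(C')^2 < 0$, so $C' \in S(X)$ and $f(C') = C$. A surjective self-map of a finite set is bijective, hence $f^{t_0}$ with $t_0 = |S(X)|!$ fixes $S(X)$ pointwise. To upgrade the equality $f^{t_0}(C) = C$ to the set-theoretic equality $f^{-t_0}(C) = C$, I apply the preimage argument once more to the endomorphism $f^{t_0} \in \SEnd(X)$: every irreducible component of $(f^{t_0})^{-1}(C)$ lies in $S(X)$, and by injectivity of $f^{t_0}$ on $S(X)$ the only such component is $C$ itself.
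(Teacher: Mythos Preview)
Your argument for (1) is fine, and once finiteness of $S(X)$ is in hand your bijection argument for the last assertion of (2) is a clean alternative to the paper's pigeonhole-plus-Claim~\ref{cl-s-1} route. The genuine gap is the finiteness itself: you write ``Here I would invoke the result of Nakayama \cite{Nak02}'' and declare this the hard step done. But \cite{Nak02} treats \emph{ruled surfaces}, not arbitrary normal projective surfaces, and the ``cf.'' in the paper's statement signals an analogy, not a citation you can import wholesale. The paper does not assume finiteness; it proves it.

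The paper's mechanism is worth knowing. Fix one non-isomorphic $g\in\SEnd(X)$. First (Claim~\ref{cl-s-1}) one checks $f^{-1}f(C)=C$ for every $f$ and every $C\in S(X)$, using that $f^{-1}$ induces a bijection between the finitely many components of $\Sigma_{f(C)}$ and $\Sigma_C$. Next (Claim~\ref{cl-s-2}) one shows that for every $C\in S(X)$ some forward iterate $g^t(C)$ lies in $\Supp R_g$: if not, Claim~\ref{cl-s-1} gives $g^*(g^t(C))=g^{t-1}(C)$, hence $(g^t)_*C=(\deg g)^t g^t(C)$ and $g^t(C)^2=(\deg g)^{-t}C^2$; but Lemma~\ref{lem-intersection} forces $n_0^2\,g^t(C)^2\in\mathbb{Z}_{<0}$ for a fixed $n_0$, which is impossible as $t\to\infty$ since $\deg g>1$. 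Setting $S_0(X)=\{C\in S(X): C\subseteq\Supp R_g\}$, a finite set, one then shows (Claim~\ref{cl-s-3}) that $S(X)=\bigcup_{i=0}^{s-1} g^i(S_0(X))$ for a suitable $s$, and finiteness follows. This ramification-divisor argument is the missing idea in your proposal; without it, (2) is not proved.
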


\begin{proof}
For (1), let $f\in \SEnd(X)$, $C\in S(X)$.
	By Lemma \ref{lem-intersection}, $n_0 C\equiv_w D$ for some fixed integer $n_0>0$ and (integral) Cartier divisor $D$.
	Write $f_*C=df(C)$.
	Then $f(C)\equiv_w f_*D/(dn)$.
	By the projection formula,
	$$f(C)^2=(f_*D)^2/(d^2n^2)=(\deg f/d^2)C^2<0.$$
	On the other hand, $\Sigma_{f(C)}=f(\Sigma_C)$.
	Therefore (1) is proved.
	
	For (2), let $g\in \SEnd(X)$ be a non-isomorphic one.
	\begin{claim}\label{cl-s-1}
	For any $f\in \SEnd(X)$ and $C\in S(X)$, $f^{-1}f(C)=C$.
	\end{claim}
	Since $f^{-1}f(\Sigma_C)=\Sigma_C$,
	our $f^{-1}$ induces a bijection between the (finitely many) irreducible components of $\Sigma_{f(C)}$ and $\Sigma_C$.
	Since $C\subseteq f^{-1}f(C)$, the claim is proved.
	
		\begin{claim} \label{cl-s-2}
		For some $t>0$,
		$g^t(C)\subseteq \Supp R_g$,  where $R_g$ is the ramification divisor of $g$.
	\end{claim}
	
	Suppose the contrary.
	By Claim \ref{cl-s-1}, we have
	$g^*(g^{t}(C))=g^{t-1}(C)$ and hence
	$(g^t)_*C=(\deg g)^t g^t(C)$ for any $t>0$.
	Therefore,
	$$g^{t}(C)^2=(\deg g)^{-t}C^2<0.$$
	By Lemma \ref{lem-intersection}, $n_0^2g^t(C)^2\in\mathbb{Z}_{<0}$ for any $t>0$.
	Note that $C^2<0$ and $\deg g>1$.
	Then we get a contradiction by letting $t\gg 1$.
The claim is proved.
	
	Denote by $$S_0(X):=\{C\in S(X)\,|\, C\subseteq \Supp R_g\}$$ which is a finite set.
	For any $C\in S(X)$, $g^i(C)=g^j(C)\in S_0(X)$ for some $i>j>0$ by Claim \ref{cl-s-2}.
	Let $s_C=i-j$ which is determined by $C$.
	Then $$C=g^{-i}g^{i}(C)=g^{-i}g^j(C)=g^{-s_C}(C).$$
	\begin{claim} \label{cl-s-3}
Let $s=\prod_{C\in S_0(X)} s_C$. Then $S(X)=\bigcup_{i=0}^{s-1} g^i(S_0(X))$, hence it is a finite set.
	\end{claim}
	Let $C\in S(X)$.
	By Claim \ref{cl-s-2}, $C_0:=g^t(C)\in S_0(X)$ for some $t>0$.
	There exist some integers $a>0$ and $b\ge 0$ such that $as=t+b$ and $0\le b<s$.
	By Claim \ref{cl-s-1} and the choice of $s$, we have $C=g^{-t}g^t(C)= g^{-t} (C_0) = g^{-t}g^{as}(C_0)=g^b(C_0)$.
The claim is proved.

Finally, by (1) and Claim \ref{cl-s-3}, for any $f\in \SEnd(X)$ and $C\in S(X)$,
we have $f^i(C)=f^j(C)$ for some $i>j>0$ with $i-j\le |S(X)|$.
By Claim \ref{cl-s-1}, $C=f^{-i}f^i(C)=f^{-(i-j)}(C)$.
So (2) is proved.
\end{proof}

A submonoid $G$ of a monoid $\Gamma$ is said to be of {\it finite-index} in $\Gamma$
if there is a chain $G = G_0 \le G_1 \le \cdots \le G_r = \Gamma$ of submonoids and homomorphisms $\rho_i : G_i \to F_i$ such that $\Ker(\rho_i) = G_{i-1}$ and all $F_i$ are finite
{\it groups}.

\begin{theorem}\label{thm-emmp-sur}
	Let $X$ be a normal projective surface admitting a non-isomorphic surjective endomorphism. Then any MMP starting from $X$ is $G$-equivariant for some finite-index submonoid $G$ of $\SEnd(X)$.
\end{theorem}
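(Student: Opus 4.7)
The plan is to produce $G$ by processing the MMP one step at a time, shrinking to a finite-index submonoid at each step. Write the MMP as
\[
X = X_0 \xrightarrow{\pi_1} X_1 \xrightarrow{\pi_2} \cdots \xrightarrow{\pi_r} X_r \xrightarrow{\pi_{r+1}} Y,
\]
where $\pi_1,\dots,\pi_r$ are divisorial contractions of $K$-negative extremal rays and $\pi_{r+1}$ (if it exists) is a Mori fiber space; this decomposition is available because Wahl's theorem forces $X$ to have at worst log canonical singularities. My target is a descending chain $\SEnd(X) = H_0 \supseteq H_1 \supseteq \cdots \supseteq H_{r+1} = G$ together with descent homomorphisms $H_i \to \SEnd(X_i)$, each $H_i$ being the kernel of a homomorphism from $H_{i-1}$ to a finite group; concatenating these chains then gives that $G$ has finite index in $\SEnd(X)$ in the sense of the paper's definition.

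For each divisorial step $\pi_i$ with exceptional locus $E^{(i)} = \bigcup_j E^{(i)}_j$, Lemma \ref{lem-surf-exc} places each component $E^{(i)}_j$ in $S(X_{i-1})$. By Lemma \ref{lem-sx}(2) every endomorphism of $X_{i-1}$ acts on the finite set $S(X_{i-1})$ through an element of finite order, so the natural monoid homomorphism $\SEnd(X_{i-1}) \to \operatorname{Perm}(S(X_{i-1}))$ has image a finite group. I will take $H_i$ to be the kernel of the composition $H_{i-1} \to \SEnd(X_{i-1}) \to \operatorname{Perm}(S(X_{i-1}))$. Elements of $H_i$ then fix each $E^{(i)}_j$ setwise, hence carry the connected divisor $E^{(i)}$ into itself, so the rigidity lemma provides the desired descent to $X_i$. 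I must also keep Lemma \ref{lem-sx} applicable along the chain: a fixed non-isomorphic $f_0$ has a power lying in $H_i$ whose descent has the same degree greater than $1$, so $X_i$ continues to admit a non-isomorphic endomorphism.

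For the terminal MFS step, if $\dim Y = 0$ the assertion is trivial, so I may assume $\rho(X_r) = 2$. Then $\NE(X_r) \subset \N_1(X_r) \cong \R^2$ is a pointed two-dimensional cone with exactly two extremal rays, one being $R_{\pi_{r+1}} = \R_{\ge 0}[F]$ for a general fiber $F$. The identity $g_* g^* = (\deg g)\,\id$ makes $g_*$ a linear automorphism of $\N_1(X_r)$; combined with $g_*(\NE(X_r)) \subseteq \NE(X_r)$ and the observation that $g^*(\NE(X_r)) \subseteq \NE(X_r)$ (giving $(\deg g)\NE \subseteq g_*\NE$), one obtains $g_*(\NE(X_r)) = \NE(X_r)$, so $g_*$ permutes the two extremal rays. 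This yields a homomorphism $\SEnd(X_r) \to \Z/2$ with finite target, whose pullback to $H_r$ has kernel a finite-index $H_{r+1}$. For $g \in H_{r+1}$, I then use that all fibers have class proportional to $[F]$ together with $[F]^2 = 0$ to conclude $g(F_y) \cdot F = 0$, so $g(F_y)$ lies in a fiber of $\pi_{r+1}$; rigidity then descends $g$ to an endomorphism $Y \to Y$.

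The hardest step conceptually is the MFS one, which behaves differently from the divisorial contractions: the MFS ray is not detected internally by $S(X_r)$ (the fiber class satisfies $[F]^2 = 0$, not $<0$), and I must argue instead via the cone automorphism property of $g_*$ together with the two-ray geometry available when $\rho(X_r) = 2$. A secondary technical point is maintaining the non-isomorphic assumption through the descent chain, which is precisely what makes the iterative application of Lemma \ref{lem-sx} legitimate.
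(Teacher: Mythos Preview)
Your argument is correct and follows the same overall architecture as the paper's proof: induct along the MMP, at each divisorial step use Lemma~\ref{lem-surf-exc} to place the exceptional components in $S(X_{i-1})$ and Lemma~\ref{lem-sx} to pass to the finite-index kernel of the permutation action on $S(X_{i-1})$, while noting that a non-isomorphic endomorphism survives to keep Lemma~\ref{lem-sx} applicable downstream.

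The one substantive difference is in the Fano contraction step. The paper dispatches it by citing the general finiteness of Fano contractions (\cite[Lemma~4.4]{MZ_PG}, \cite[Lemma~6.2]{MZ}): $\SEnd(X_r)$ permutes the finitely many $K_{X_r}$-negative extremal contractions, and one passes to the stabilizer. You instead give a self-contained surface argument: when $\rho(X_r)=2$ (the only nontrivial case), $\NE(X_r)$ is a two-ray cone, $g_*g^*=(\deg g)\,\id$ together with $g_*\NE\subseteq\NE$ and $g^*\NE\subseteq\NE$ forces $g_*$ to permute the two rays, so the homomorphism to $\Z/2$ does the job. Both are the same idea (endomorphisms permute a finite set of extremal rays); your version avoids the external citation at the cost of a short cone computation, and your verification that $g$ actually sends fibers into fibers via $[F]^2=0$ and rigidity makes the descent explicit where the paper leaves it implicit in the cited lemma.
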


\begin{proof}
By \cite[Theorem 2.8]{Wah}, $X$ has only lc singularities, so one can run MMP within the lc category (cf.~\cite[Theorem 1.1]{Fuj12}).
Any MMP of $X$ has at most $\rho(X)$ steps and involves only divisorial and Fano contractions.
Let $\pi:X\to Y$ be the first step.
Suppose $\pi$ is a Fano contraction.
By the finiteness of Fano contractions (cf.~\cite[Lemma 4.4]{MZ_PG}, \cite[Lemma 6.2]{MZ}), there is a submonoid $G\le \SEnd(X)$ such that $\pi$ is $G$-equivariant.
	
Suppose $\pi$ is divisorial.
By Lemma \ref{lem-surf-exc}, each irreducible component of the $\pi$-exceptional divisor is in $S(X)$.
By Lemma \ref{lem-sx}, $S(X)$ is finite and there is a submonoid $G\le \SEnd(X)$ of finite index such that $G|_{S(X)}=\id$.
So $\pi$ is $G$-equivariant.
Since $G$ and hence $G|_Y$ admit non-isomorphic endomorphisms,
we may replace $X$ by $Y$ and repeat the argument.
\end{proof}

\section{KSC for Surfaces: Proof of Theorem \ref{main-thm-surface}}

In this section, we will prove KSC for surfaces.
Indeed, we provide a very detailed characterization of a non-isomorphic surjective endomorphism $f:X\to X$ of a normal projective surface $X$.
Note that such $X$ has log canonical (lc) singularities by \cite[Theorem 2.8]{Wah}.
In particular, the canonical divisor $K_X$ is $\Q$-Cartier.

First, we recall a result of Nakayama which characterizes the case when the canonical divisor is pseudo-effective.

\begin{theorem}\label{thm-surf-pe}(cf.~\cite[Theorem 7.1.1]{ENS} or \cite[Theorem 3.1]{Nak-1934})
Let $f:X\to X$ be a non-isomorphic surjective endomorphism of a normal projective surface $X$ with $K_X$ being pseudo-effective.
Then
$K_X$ is nef,
$f$ is quasi-\'etale, and there is a quasi-\'etale finite Galois cover $\nu:V\to X$ such that $\nu\circ f_V=f^{\ell}\circ \nu$ for a non-isomorphic surjective endomorphism $f_V$ of $V$ and a positive integer $\ell$, and that $V$ and $\nu$ satisfy exactly one of the following conditions:
\begin{itemize}
\item[(1)] $V$ is an abelian surface.
\item[(2)] $V\cong E \times T$ for an elliptic curve $E$ and a smooth projective curve $T$ of genus $\ge 2$.
Moreover, $f_V$ and $f$ have no Zariski-dense orbit.
\end{itemize}
\end{theorem}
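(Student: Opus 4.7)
My approach is to run the equivariant MMP on $X$ and then classify minimal models. Since $X$ is lc by Wahl's theorem (\cite[Theorem 2.8]{Wah}, cited just above), Theorem \ref{thm-emmp-sur} provides an $f$-equivariant MMP after replacing $f$ by a positive power. Pseudo-effectivity of $K_X$ rules out Fano contractions, so this MMP terminates at a minimal model $X_0$ with $K_{X_0}$ nef, together with an induced non-isomorphic endomorphism $f_0$. Once the classification below shows that the possible $X_0$ contain no curve of negative self-intersection, Lemma \ref{lem-sx} and the structure of the equivariant divisorial contractions force the MMP $X \dashrightarrow X_0$ to be an isomorphism; in particular, $K_X$ is already nef as claimed.

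The classification of $X_0$ proceeds by Kodaira dimension $\kappa := \kappa(X_0, K_{X_0})$. The case $\kappa = 2$ is excluded because any surjective self-morphism of a normal projective surface of general type is an automorphism of the canonical model, and $\Aut(X_0)$ is then finite, contradicting the existence of non-isomorphic $f_0$. For $\kappa = 0$, the classification of lc projective surfaces with $K \equiv 0$ (abelian, hyperelliptic/bielliptic, together with log K3 and log Enriques types) shows that only the abelian and hyperelliptic classes admit a non-isomorphic surjective endomorphism, and each carries a quasi-\'etale finite Galois cover by an abelian surface, yielding Case (1). For $\kappa = 1$, the pluricanonical map $|mK_{X_0}|$ defines an elliptic fibration $\phi: X_0 \to C$, which becomes $f_0$-equivariant after a further replacement of $f_0$ by a power, by finiteness of such fibrations (\cite[Lemma 4.4]{MZ_PG}). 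Combining Kodaira's canonical bundle formula with Riemann--Hurwitz applied to the induced $f_C: C \to C$ forces $g(C) \geq 2$ and isotriviality of $\phi$; a suitable finite \'etale Galois cover of $C$ then absorbs the multiple fibers and trivializes $\phi$, producing a quasi-\'etale cover $V \cong E \times T$ with $g(T) \geq 2$ as in Case (2).

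The quasi-\'etale property of $f$ follows by lifting: $f_V$ on an abelian surface is an isogeny composed with a translation, hence \'etale; on $E \times T$, the higher genus factor $T$ forces $f_V$ to preserve the product, where it acts as an automorphism of finite order on $T$ and as an isogeny-with-translation on $E$, both of which are \'etale. Thus $R_{f_V} = 0$, and since $\nu$ is quasi-\'etale, $R_f = 0$ in codimension one. In Case (2), every $f_V$-orbit is contained in the preimage of a finite $f_V|_T$-orbit in $T$, so it meets only finitely many elliptic fibers of $V \to T$ and is not Zariski dense; the same then holds for $f$-orbits on $X$ via the finite morphism $\nu$.

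The principal technical obstacle is the $\kappa = 1$ case, where one must track the multiple and singular fibers of the elliptic fibration under iteration of $f_0$, apply Riemann--Hurwitz to $f_C$ with correct weighting at these marked points, and then exploit the rigidity of non-isotrivial elliptic fibrations (via the $j$-map $C \to \overline{M_{1,1}}$) to force isotriviality. Constructing the cover $\nu: V \to X$ compatibly so that $\nu \circ f_V = f^\ell \circ \nu$ holds for some positive integer $\ell$, and simultaneously trivializing the elliptic fibration over the covering base, involves delicate bookkeeping of multiplicities and is the heart of Nakayama's argument.
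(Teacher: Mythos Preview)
The paper does not prove this theorem independently: it invokes \cite[Theorem 7.1.1]{ENS} with the totally invariant divisor $S = 0$, and only adds the argument that in Case (2) neither $f_V$ nor $f$ has a Zariski-dense orbit---done by observing that $f_V$ must send each elliptic fiber $E \times \{t\}$ into another (since such a fiber cannot dominate the genus $\ge 2$ curve $T$), descending $f_V$ to $h: T \to T$ via the rigidity lemma, and noting that $h$ has finite order. Your no-dense-orbit argument is essentially the same as the paper's.

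Your proposal goes further and sketches the internal structure of Nakayama's proof; the outline (equivariant MMP, classification by Kodaira dimension, construction of the quasi-\'etale cover) is the right shape. But the step where you conclude that the MMP $X \dashrightarrow X_0$ is an isomorphism is not justified by the ingredients you cite. That $X_0$ carries no curve of negative self-intersection constrains $X_0$, not $X$: a nontrivial divisorial contraction introduces negative exceptional curves on the \emph{source}, not the target. Lemma \ref{lem-sx} only asserts that $S(X)$ is finite and each of its members is $f^{-1}$-periodic; it does not force $S(X) = \emptyset$. Worse, your proof that $f$ is quasi-\'etale runs through the cover $\nu: V \to X$, but the classification only hands you a cover of the minimal model $X_0$; you need $X = X_0$ already in order to transport it to $X$. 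So as written the two conclusions ``$K_X$ is nef'' and ``$f$ is quasi-\'etale'' are circularly dependent. In Nakayama's argument these are established directly from the ramification formula $R_f = f^*K_X - K_X$ together with the pseudo-effectivity of $K_X$, \emph{before} any classification is attempted, and that direct step is the missing entry point in your sketch.
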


\begin{proof}
This follows from \cite[Theorem 7.1.1]{ENS} or \cite[Theorem 3.1]{Nak-1934} by letting the totally invariant divisor $S = 0$ there.
In fact, we only have Cases (3) and (2) there corresponding to our Cases (1) and (2) here.
For our Case (2), we only need to check the assertion about the non-existence of dense orbits.
For this, note that $f_V(E \times \{t\})$ has genus $\le 1$ (an elliptic curve, indeed)
and it cannot dominate $T$ which is of genus $\ge 2$.
Thus $f: V \to V$ descends to a surjective endomorphism $h: T \to T$ by the rigidity lemma \cite[Lemma 1.15]{Deb}.
Since $T$ has genus $\ge 2$, this $h$ has finite order.
So $f_V$ and hence $f$ have no Zariski-dense orbit.
\end{proof}	

We refer to \cite[Theorem 1.1 (4) iii]{Fuj11} for the cone theorem frequently used late on.

\begin{theorem}\label{thm-surf-rho2}
Let $X$ be a normal projective surface
with only log canonical singularities and
$\pi:X\to Y$ a Fano contraction with $\dim(Y)=1$.
Let $f:X\to X$ and $g:Y\to Y$ be surjective endomorphisms such that $g\circ \pi=\pi\circ f$.
Suppose $\delta_f>\delta_g$.
Then we have:
\begin{itemize}
\item[(1)] $f^*D\sim \delta_f D$ for some semi-ample and $\pi$-ample prime divisor $D$ with $R_D$ being an extremal ray of $\NE(X)$.
\item[(2)] There is a $\delta_f$-polarized endomorphism $h:\mathbb{P}^1\to \mathbb{P}^1$ such that $h\circ \phi=\phi\circ f$ where $\phi:X\to \mathbb{P}^1$ is the Iitaka fibration of $D$.
\end{itemize}
In particular, there is a finite surjective morphism $\tau:X\to \mathbb{P}^1\times Y$ such that $(g\times h)\circ \tau=\tau\circ f$.
\end{theorem}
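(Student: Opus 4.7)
The plan is to analyse the 2-dimensional action of $f^*$ on $\N^1(X)$, extract a second nef eigenclass on the boundary, realise it as a fibration to $\mathbb{P}^1$, and descend $f$. Since $\pi$ is a Fano contraction to a curve, $\rho(X)=2$; the fiber class $F:=\pi^*[\mathrm{pt}]$ satisfies $F^2=0$ and, via $\pi\circ f=g\circ\pi$, is an $f^*$-eigenvector with eigenvalue $\delta_g=\deg g$. The spectral radius of $f^*|_{\N^1(X)}$ is $\delta_f$, so the second eigenvalue of this $2\times 2$ operator is $\delta_f$, distinct from $\delta_g$ by hypothesis. Perron--Frobenius on the $f^*$-stable closed convex cone $\Nef(X)$ yields $L\in\Nef(X)\setminus\{0\}$ with $f^*L=\delta_f L$. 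The projection formula gives $\delta_f^2 L^2=(\deg f)L^2=\delta_f\delta_g L^2$ (the last equality since the two eigenvalues multiply to $\det(f^*|_{\N^1(X)})=\deg f$), and $\delta_f>\delta_g$ forces $L^2=0$. By the Hodge index theorem the null cone in $\N^1(X)$ has exactly two boundary rays in the positive cone, namely $R_F$ and $R_L$, so these are the extremal rays of $\Nef(X)$ and, by Kleiman duality, of $\NE(X)$.

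Apply the log canonical cone/base-point-free theorem \cite[Theorem 1.1]{Fuj11} to the nef extremal class $L$: semi-ampleness of $L$ gives (after Stein factorisation) a fibration $\phi:X\to C$ onto a smooth curve with $L\sim_\Q \phi^*A$, $A$ ample. A general $\pi$-fiber $F'$ is $\mathbb{P}^1$ (general fiber of a Fano contraction of relative dimension $1$), and $L\cdot F'>0$ (the two null rays of a signature-$(1,1)$ form pair positively) forces $\phi|_{F'}:\mathbb{P}^1\twoheadrightarrow C$ to be non-constant, so $C\cong\mathbb{P}^1$. Since $f^*L=\delta_f L$, $f$ sends $\phi$-fibers to $\phi$-fibers; by the rigidity lemma it descends to $h:\mathbb{P}^1\to\mathbb{P}^1$ with $\phi\circ f=h\circ\phi$, and comparing $f^*\phi^*A=\phi^*h^*A$ with $f^*L=\delta_f L$ gives $h^*\OO(1)=\OO(\delta_f)$, i.e., $h$ is $\delta_f$-polarized.

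Take $D:=\phi^{-1}(p_0)$ for a general $p_0\in\mathbb{P}^1$: it is a prime divisor (general fiber over a smooth curve), semi-ample (a fiber of the semi-ample class $L$), $\pi$-ample (because $D\cdot F'>0$), and $R_D=R_L$ is extremal in $\NE(X)$. The identity $h^*[p_0]\sim \delta_f[p_0]$ in $\Pic(\mathbb{P}^1)\cong\Z$ pulls back to $f^*D\sim \delta_f D$, proving (1) and (2). The morphism $\tau:=(\pi,\phi):X\to Y\times\mathbb{P}^1$ has general fibers $\pi^{-1}(y)\cap\phi^{-1}(p)$ of length $F\cdot D>0$, hence is finite surjective, and $(g\times h)\circ\tau=\tau\circ f$ is immediate from the two separate equivariances. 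The principal obstacle I anticipate is the semi-ampleness of $L$ in the lc setting when $R_L$ is not $K_X$-negative, since Fujino's contraction theorem is formulated for $K_X$-negative extremal rays; a workaround is to pass to an $f^s$-equivariant lc resolution (available via the equivariant MMP of Section~\ref{sec-sur-emmp}), argue semi-ampleness on the smooth resolution using $f^*L=\delta_f L$ with $\delta_f>1$ together with Zariski-type results for nef classes with $L^2=0$, and descend the fibration back to $X$.
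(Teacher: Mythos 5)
Your cone analysis at the start is correct and matches the paper's opening: $\rho(X)=2$, the fibre class $F$ is an $f^*$-eigenvector with eigenvalue $\delta_g$, the second eigenvalue is $\delta_f$, a Perron--Frobenius eigenclass $L$ (the paper's $D$) lies on the boundary of $\Nef(X)$, and $L^2=0$ follows from $\deg f=\delta_f\delta_g$ together with $\delta_f>\delta_g$. The gap is the very next step: you cannot ``apply the log canonical base-point-free theorem to the nef extremal class $L$''. That theorem (\cite[Theorem 2.1]{Fuj11}) requires $aL-K_X$ to be nef and big, so it only applies when $K_X\cdot L<0$; this is exactly Step 1 of the paper's proof and is the easy case. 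When $K_X\cdot L\ge 0$ no contraction theorem is available, and the entire difficulty of the theorem is concentrated in showing that the ray $R_L$ contains an effective divisor with positive Iitaka dimension (the paper's Claim \ref{c1}). A nef class with self-intersection zero on a surface need not have any effective multiple --- e.g., the pullback to a product of a non-torsion degree-zero line bundle on an elliptic curve --- so nefness plus $L^2=0$ alone, even with the eigenvector property, does not yield semi-ampleness.

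Your proposed workaround, passing to an $f^s$-equivariant lc resolution, is not available: for a non-isomorphic surjective endomorphism an equivariant resolution does not exist in general, which is precisely why the paper runs an equivariant MMP on the singular surface rather than resolving (see the Introduction and Section \ref{sec-sur-emmp}; the equivariant MMP only contracts curves, it never extracts them). What the paper actually does when $K_X\cdot L\ge 0$ is: show $L\cdot R_f=0$, hence $R_f\neq 0$ and $R_{R_f}=R_L$; then argue by cases on the ramification divisor --- if some component of $R_f$ is not $f^{-1}$-periodic one produces two disjoint effective divisors in the ray (Step 3); if $\Supp R_f$ is reducible with $f^{-1}$-invariant components, the difference of two components lies in $\pi^*\Pic^0(Y)$ and is killed by the eigenvalue estimate of Proposition \ref{prop-eig-sqrt} (Step 4); and in the remaining case where $\Supp R_f$ is irreducible and totally invariant, one passes to the normalization of $X\times_Y\widetilde{Y}$, with $\widetilde{Y}$ the normalization of $D$, which splits $D$ into two components and reduces to the previous case (Step 5). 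None of this machinery appears in your proposal, so the effectivity and semi-ampleness of $D$ --- and with it both assertions (1) and (2) --- remain unproved outside the $K_X$-negative case.
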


\begin{proof}
First, $X$ has rational singularities, hence $\Q$-factorial (cf.~\cite[Lemmas 2.4.9 and 2.4.10]{ENS} or \cite[Proposition 2.33]{Nak-1825}).
By the assumption, the Picard number $\rho(X) = \rho(Y) + 1 = 2$.

Note that $\delta_g$ is a positive integer.
Since $\pi^*(\N^1(Y))$ is an $f^*$-invariant hyperplane of $\N^1(X)$, another eigenvalue $\delta_f$ of $f^*|_{\N^1(X)}$ is also an integer.
Let $F\cong \mathbb{P}^1$ be a general fibre of $\pi$.
Then $f^*F\equiv \delta_gF$.
Let $R_D$ be another extremal ray of $\Nef(X)$.
Then $D\cdot F>0$, and
$f^*D\equiv \delta_f D$.
We have $D^2 = 0$, for otherwise, $D^2>0$ and $$(\delta_f\delta_g)D^2=(\deg f)D^2=(f^*D)^2=(\delta_f)^2D^2$$ imply that $\delta_f=\delta_g$, contradicting the assumption.
Thus, $$\Nef(X) = \PE^1(X) = \NE(X).$$

\begin{claim}\label{c1}
Some choice of $D$ is $\Q$-Cartier and has $\kappa(X,D)>0$.
\end{claim}

Once Claim \ref{c1} is proved, the new $D$ has $\kappa(X,D)=1$ since $D^2=0$.
Then we have $D\sim_{\Q} D_a+D_c\sim_{\Q}D_b+D_c$
for effective $\Q$-Cartier divisors $D_a, D_b, D_c$ such that $\Supp D_a$ and $\Supp D_b$ are non-empty and have no common irreducible component.
Since $R_D$ is extremal in $\NE(X)$, we have $R_D=R_{D_a}=R_{D_b}$.
Then $D_a$ is nef and $D_a\cdot D_b=0$.
So $\Supp D_a \cap \Supp D_b=\emptyset$. In particular, $D_a$ is semi-ample.
Replacing $D$ by $mD_a$ for some $m>0$, we may assume $D$ is base point free.
Then the Iitaka fibration $\phi:X\to B$ is a morphism with $B$ being a smooth projective curve.
Note that $D \sim_{\Q} \phi^*H$ for some ample $\Q$-Cartier divisor $H$ of $B$.
Let $C$ be any irreducible curve of $X$.
Then $\phi(C)$ is a point if and only if $D\cdot C=H\cdot \phi_*C=0$.
Note that $f_*C\cdot D=C\cdot f^*D=\delta_f (C\cdot D)$.
So $\phi(C)$ is a point if and only if so is $\phi(f(C))$.
Since the Iitaka fibration $\phi$ has connected fibres, there is a surjective endomorphism $h:B\to B$ such that $h\circ \phi=\phi\circ f$ by the rigidity lemma (cf.~\cite[Lemma 1.15]{Deb}).
Note that $F$ dominates $B$ since $F\cdot D>0$.
Then $B\cong \mathbb{P}^1$ and $h^*|_{\Pic(B)}=\delta_f \id$.
In particular, $f^*D\sim_{\Q} f^*\phi^*H=\phi^*h^*H\sim_{\Q} \delta_f D$.

This proves the assertion (1) and (2) of the theorem.
For the final assertion of the theorem, $\tau$ is naturally induced by the two fibrations $\pi$ and $\phi$.
It is finite because $\rho(X)=\rho(\mathbb{P}^1\times Y)=2$.

Therefore, to prove the theorem, we only need to show Claim \ref{c1} which will be proved in several steps below.

{\bf Step 1.} Suppose $K_X\cdot D<0$.
By the cone theorem, $R_D$ is generated by a rational curve again denoted as $D$.
Note that $(aD-K_X)\cdot D>0$ and $(aD-K_X)\cdot F>0$ for $a>0$.
Then $aD-K_X$ is ample by Kleiman's ampleness criterion (cf.~\cite[Theorem 1.8]{KM}) and hence $D$ is semi-ample by the base point free theorem (cf.~\cite[Theorem 2.1]{Fuj11}).
So Claim \ref{c1} is proved in this case.

{\bf Step 2.} From now on, we assume that $K_X\cdot D\ge 0$.
Note that $$0\le D\cdot R_f=D\cdot (K_X-f^*K_X)=D\cdot K_X-f^*D\cdot f^*K_X/\delta_f=(1-\delta_g)D\cdot K_X\le 0.$$
Then $D\cdot R_f=0$.  Hence either $R_f=0$ or $R_D=R_{R_f}$.
If $R_f=0$, then $K_X=f^*K_X$ implies that $K_X$ is an eigenvector of  $\N^1(X)$.
So $K_X$ is numerically parallel to one of $D$ and $F$ and it must be the former since $-K_X$ and $D$ are relatively ample (but not $F$) over $Y$. Hence $1=\delta_f > \delta_g \ge 1$, a contradiction.
Therefore, $R_f\neq 0$ and $R_D=R_{R_f}$.
Write $R_f=\sum a_i D_i$ where $a_i>0$ are integers and $D_i$ are irreducible components.
Since $R_{R_f}$ is extremal in $\NE(X)$, $R_{D_i}=R_{D}$ for every $i$.

{\bf Step 3.}
Suppose $D_1$ is not $f^{-1}$-periodic.
Then there exists infinitely many different irreducible curves $E_t$ such that $f_*E_t=e_t E_{t-1}$ for some integer $e_t>0$ and $E_1=D_1$.
By Proposition \ref{prop-vf}, $E_t\sim_{\Q}(e_t/\deg f)f^*E_{t-1}$.
Then $V_f(D_1)$ (cf.~Definition \ref{def-vf}) is spanned by $\{E_t\}_{t\ge 0}$.
By Proposition \ref{prop-vf}, $V_f(D_1)$ is finite dimensional.
Then we have $A:=\sum_{i\in I} b_iE_i\sim_{\Q} \sum_{j\in J} b_jE_j=:B$ for some finite sets $I$ and $J$ with $I\cap J=\emptyset$ and $b_i,b_j$ are positive integers.
Note that $R_D=R_A$ and $\kappa(X, A)>0$.
So in this case, Claim \ref{c1} holds by taking $A$ as new $D$.

{\bf Step 4.}
Now we may assume that $f^{-1}(D_i)=D_i$ for every $i$ after replacing $f$ by a positive power.
Then $f^*D_i=\delta_f D_i$.

Suppose $\Supp R_f$ is not irreducible. Then we have $D_1\equiv tD_2$ for some rational number $t>0$.
Note that $m(D_1-tD_2)\in \pi^*(\Pic^0(Y))$ for some positive integer $m$  and $$f^*(D_1-tD_2)=\delta_f(D_1-tD_2).$$
If $D_1-tD_2 \sim_{\Q} 0$, then
$\kappa(X,D_1)>0$, and we are done.
Otherwise, $m(D_1-tD_2)\in \pi^*(\Pic^0(Y))$ is not a torsion. Hence $g^*$ has an eigenvector in $\Pic^0_{\C}(Y)$
corresponding to the eigenvalue $\delta_f>1$; thus the condition of Proposition \ref{prop-eig-sqrt}
cannot be satisfied, i.e., the Albanese morphism of $X$ is not surjective.
So the genus of $Y$ is at least $2$,
and then
$g$ has finite order and all the eigenvalues of $g^*|_{\Pic^0_{\C}(Y)}$ are roots of unity, again a contradiction.

{\bf Step 5.}
Finally, we are left with the case that $\Supp R_f=D_1$ is irreducible and $f^{-1}$-invariant.
Replace $D$ by $D_1$.
Then $K_X+D=f^*(K_X+D)$.
Note that
$$(K_X+D)\cdot F=f^*(K_X+D)\cdot f^*F/\delta_g=\delta_f(K_X+D)\cdot F.$$
So $(K_X+D)\cdot F=0$ and $D\cdot F=-K_X\cdot F=2$.

Let $\widetilde{X}$ be the normalization of the (irreducible) main component of $X\times_Y \widetilde{Y}$ where $\widetilde{Y}$ is the normalization of $D$.
Denote by $p_1:\widetilde{X}\to X$ and $p_2:\widetilde{X}\to \widetilde{Y}$ the induced projections.
Denote by $\widetilde{f}:\widetilde{X}\to \widetilde{X}$ the equivariant lifting of $f$ and $\widetilde{D}:=p_1^{-1}(D)$.
Note that there is a diagonal embedding $D\to D\times_Y D$ and $\pi|_D: D\to Y$ is a double cover.
Then $\widetilde{D}$ is not irreducible.
Note that the general fibre of $p_2$ is a smooth rational curve.
So $\widetilde{X}$ has only rational singularities and is $\Q$-factorial (cf.~\cite[Lemmas 2.4.9 and 2.4.10]{ENS} or \cite[Proposition 2.33]{Nak-1825}).
Write $\widetilde{D}:=\sum_{i=1}^2 \widetilde{D}_i$.
Replacing $\widetilde{f}$ by a positive power, we may assume $\widetilde{f}^{-1}(\widetilde{D}_i)=\widetilde{D}_i$ for each $i$.
Then $\widetilde{f}^*\widetilde{D}_i=\delta_f \widetilde{D}_i$.

We assert that $p_2: \widetilde{X}\to \widetilde{Y}$ is a Fano contraction.
First, $\deg (\widetilde{f}) = \deg(f) \ge 2$
implies that $\widetilde{X}$ is lc, thus we can run MMP of $X$ (cf.~\cite[Theorem 1.1]{Fuj12}).
Now let $\widetilde{C}$ be any negative curve of $\widetilde{X}$.
By Lemma \ref{lem-sx}, $\widetilde{f}^{-1}(\widetilde{C})=\widetilde{C}$ after replacing $\widetilde{f}$ by a positive power.
Write $\widetilde{f}_*(\widetilde{C})=t\widetilde{C}$ for some $t>0$.
Then $\widetilde{f}^*(\widetilde{C})=(\deg \widetilde{f} /t) \widetilde{C}$.
Now $f_*{p_1}_*\widetilde{C}={p_1}_*\widetilde{f}_*\widetilde{C}=t{p_1}_*\widetilde{C}$.
Since $p_1$ is finite, $p_1(\widetilde{C})$ is not a point and hence either $t=\delta_f$ or $\delta_g$.
However, $\widetilde{C}^2<0$ implies that $t^2 = \deg \widetilde{f}= \deg f$. Then $\delta_f=\delta_g$, a contradiction.
Thus the relative MMP of $\widetilde{X}$ over $\widetilde{Y}$ has only one step Fano contraction which is $p_2$ (as asserted).

Note that $\widetilde{D}\subseteq \Supp R_{\widetilde{f}}$.
By the same argument of Step 4, since $\widetilde{D}$ is not irreducible, we have $\kappa(\widetilde{X}, \widetilde{D}_i)>0$ and hence $\kappa(X,D)>0$.

So Claim \ref{c1} is proved in this case.
This also proves the theorem.
\end{proof}

We now characterize the case when the canonical divisor is not pseudo-effective.

\begin{theorem}\label{thm-surf-nonpe}
Let $f:X\to X$ be a non-isomorphic surjective endomorphism of a normal projective surface $X$ with $K_X$ not being pseudo-effective.
Then, replacing $f$ by a positive power, one of the following holds.
\begin{itemize}
\item[(1)] $f$ is polarized and $f^*|_{\N^1(X)}=q\id$ for some integer $q>1$.
\item[(2)] $\rho(X)=2$; there is an $f$-equivariant Fano contraction $\pi:X\to Y$ with $\delta_f=\delta_{f|_Y}$.
\item[(3)] $\rho(X)=2$; there exist a finite surjective morphism $\tau:X\to \mathbb{P}^1\times Y$ where $Y$ is a smooth projective curve, a surjective endomorphism $g:Y\to Y$, and a surjective endomorphism $h:\mathbb{P}^1\to \mathbb{P}^1$ such that $(g\times h)\circ \tau=\tau\circ f$.
\end{itemize}
\end{theorem}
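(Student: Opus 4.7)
The plan is to run an $f$-equivariant MMP on $X$ (after replacing $f$ by a positive power, as provided by Theorem~\ref{thm-emmp-sur}). A key initial observation is that $K$ remains not pseudo-effective along the MMP: for a divisorial step $\sigma:X_i\to X_{i+1}$ with $K_{X_i}$-negative exceptional divisor $E$, one has $K_{X_i}\sim_{\Q}\sigma^*K_{X_{i+1}}+aE$ with $a>0$, so pseudo-effectivity would propagate upward. Hence the MMP terminates at an $f$-equivariant Mori fiber space $\pi:X_r\to Y$ with $\rho(X_r)=\dim(Y)+1\in\{1,2\}$, and every $\deg f_i=\deg f>1$.

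Suppose first $\dim(Y)=0$. Then $\rho(X_r)=1$ with $-K_{X_r}$ ample, so the induced $f_r$ acts on $\N^1(X_r)\cong\R$ by multiplication by some positive integer $q$, and $q>1$ since $\deg f_r>1$. I would then prove by induction backwards along the MMP that $f^*|_{\N^1(X_i)}=q\,\id$ at every stage: at each divisorial step $\sigma_i:X_i\to X_{i+1}$ with exceptional curve $E_i$, Lemma~\ref{lem-sx} (after absorbing another power of $f$) gives $f^{-1}(E_i)=E_i$; since $E_i^2<0$ and $E_i$ generates an extremal ray, a projection-formula and intersection-number computation forces $f^*E_i=qE_i$; combined with the inductive hypothesis on $\N^1(X_{i+1})$ and the decomposition $\N^1(X_i)=\sigma_i^*\N^1(X_{i+1})\oplus\R[E_i]$, this propagates $f^*|_{\N^1(X_i)}=q\,\id$ down to $X$. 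Upgrading to a linear-equivalence polarization $f^*H\sim qH$ for some ample Cartier $H$ then follows after one more power of $f$ via a $\Pic^0$-adjustment: by Propositions~\ref{prop-eig-sqrt} and \ref{prop-iamp->1}, every eigenvalue of $f^*|_{\Pic^0_{\Q}(X)}$ has modulus at most $\sqrt{q}<q$, so $f^*-q\,\id$ is invertible on $\Pic^0_{\Q}(X)$ and one can shift an ample Cartier $H$ by an element of $\Pic^0_{\Q}$ to produce the desired polarization. This yields case~(1).

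Suppose instead $\dim(Y)=1$. I would first argue that no prior divisorial contraction can have occurred: any such step, by the analysis above, already forces $f^*|_{\N^1(X)}=q\,\id$, which is incompatible with an $f$-equivariant Fano contraction to a curve having $\delta_f\ne\delta_g$ and does not produce the required structure in case~(2) either unless $X=X_r$. Hence $X=X_r$ with $\rho(X)=2$ and an $f$-equivariant Fano contraction $\pi:X\to Y$ descending to $g:Y\to Y$. Lemma~\ref{lem-d1-bir} gives $\delta_f\ge\delta_g$. If $\delta_f=\delta_g$, this is precisely case~(2). Otherwise $\delta_f>\delta_g$, and Theorem~\ref{thm-surf-rho2} directly supplies the finite surjective morphism $\tau:X\to\mathbb{P}^1\times Y$ intertwining $f$ with $g\times h$, giving case~(3).

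The main obstacle is the $\rho(X)\ge 3$ situation, where MMP necessarily performs divisorial contractions before reaching the Mori fiber space: either one must propagate the polarized action $f^*|_{\N^1}=q\,\id$ back up the MMP tower (when $\dim Y=0$), or rule out divisorial steps preceding the Fano contraction (when $\dim Y=1$). Both reduce to the same key input, namely that each MMP-exceptional curve is $f^{-1}$-invariant with the correct $f^*$-eigenvalue; the finiteness of $S(X_i)$ from Lemma~\ref{lem-sx}, combined with the extremality of exceptional rays in $\NE(X_i)$ and the projection formula, is what makes this control possible.
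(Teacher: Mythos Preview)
Your overall architecture matches the paper's: run the equivariant MMP (Theorem~\ref{thm-emmp-sur}) and split according to $\dim Y$. For $\dim Y=0$, your explicit backward induction---using $E_i^2<0$ to get $(f^*E_i)^2=(\deg f)E_i^2$, hence $t_i=\sqrt{\deg f}=q$ once the inductive hypothesis $f^*|_{\N^1(X_{i+1})}=q\,\id$ gives $\deg f=q^2$---is a correct hands-on substitute for the paper's citation of \cite[Corollary 3.12, Theorem 1.8]{MZ}. (For the $\Pic^0$-adjustment you should note that $f^*|_{\N^1}=q\,\id$ with $q>1$ makes $f$ int-amplified, so the Albanese map is surjective by \cite[Theorem 1.8]{Meng_IAMP}, and then Proposition~\ref{prop-eig-sqrt} applies.)

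The genuine gap is in the $\dim Y=1$ case with a prior divisorial contraction. Your claim that ``any such step, by the analysis above, already forces $f^*|_{\N^1(X)}=q\,\id$'' is not justified: the induction you ran for $\dim Y=0$ needs the base case $f^*|_{\N^1(X_r)}=q\,\id$, which was automatic there because $\rho(X_r)=1$, but here $\rho(X_r)=2$ and $f_r^*$ has eigenvalues $\delta_g$ and $\lambda$ with $\delta_g\lambda=\deg f$. The self-intersection argument only gives $t=\sqrt{\deg f}=\sqrt{\delta_g\lambda}$ for the exceptional $E$, which says nothing unless you already know $\delta_g=\lambda$. The paper closes this with a fibre argument you are missing: since $f^{-1}(E)=E$, the image point $p=\pi_1(E)\in X_2$ satisfies $f_2^{-1}(p)=\{p\}$, hence $P=\pi_2(p)\in Y$ is totally $g^{-1}$-invariant and $g^*P=\delta_g P$; pulling back the full fibre $F=\pi_1^*\pi_2^*P=\widetilde{F_2}+aE$ gives $f^*F=\delta_g F$, and comparing the $E$-coefficient with $f^*F=f^*\widetilde{F_2}+atE$ forces $t=\delta_g$. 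Then $\delta_g^2=t^2=\deg f=\delta_g\lambda$ yields $\lambda=\delta_g$, so $f_r$ is polarized after all---a contradiction which gives $r=1$. Without this step, your reduction to $X=X_r$ in the curve-base case does not go through.
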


\begin{proof}
Note that $X$ is lc by \cite[Theorem 2.8]{Wah}.
By \cite[Theorem 1.1]{Fuj12} and Theorem \ref{thm-emmp-sur}, replacing $f$ by a positive power, we may run $f$-equivariant MMP
	$$X = X_1 \to \cdots \to X_i\to \cdots \to X_r\to Y$$
with $\pi_i:X_i\to X_{i+1}$ being divisorial contractions for $i<r$ and $\pi_r:X_r\to Y$ being a Fano contraction.
Denote by $f_i:=f|_{X_i}$ and $g:=f|_Y$.
If $Y$ is a point, then $\rho(X_r)=1$ and $f_r$ is automatically polarized since $\deg f_r=\deg f>1$.
Note that if $f_r$ is polarized, then $f$ is polarized by \cite[Corollary 3.12]{MZ} and further $f^*|_{\N^1(X)}$ is a scalar action (cf.~\cite[Theorem 1.8]{MZ}).

Suppose now that $Y$ is a curve and $f_r$ is not polarized.
We claim that $r=1$.
Replacing $X$ by $X_{r-1}$, it suffices for us to consider the case when $r=2$.
Let $E$ be the exceptional divisor of $\pi_1:X\to X_2$.
Then $f^{-1}(E)=E$ and write $f^*E=tE$ for some $t>0$.
Let $P:=\pi_2\circ\pi_1(E)$ be a point in $Y$.
Then $g^*P=\delta_g P$.
Let $F_2:=\pi_2^*P$ and $F:=\pi_1^*F_2$.
Then $F=\widetilde{F_2}+aE$ where $a>0$ and $\widetilde{F_2}$ is the strict transform of $F_2$ in $X$.
Since $f^{-1}(\Supp F)=\Supp F$, we have $f^{-1}(\Supp \widetilde{F_2})=\Supp \widetilde{F_2}$.
Note that
$$\delta_g \widetilde{F_2}+ \delta_g aE=\delta_g F=f^*F=f^*\widetilde{F_2}+atE.$$
Therefore, $t=\delta_g$.
On the other hand, $E^2<0$ implies that $\delta_g^2=t^2=\deg f=\deg f_2$, hence the two eigenvalues of $f_2^*|_{\N^1(X_2)}$ are both $\delta_g$.
Since $\deg f>1$, $f$ is then polarized, a contradiction.
So the claim is proved.
In particular, $\rho(X)=2$.

The theorem is finished then by applying Theorem \ref{thm-surf-rho2}.
\end{proof}

\begin{remark}
In \cite[\S 7]{MY}, Matsuzawa and Yoshikawa constructed a family of int-amplified surjective endomorphisms $f:X\to X$ of a klt rational surface satisfying Theorem \ref{thm-surf-nonpe} (2) but not the others.
Their example has the properties: $\kappa(X,-K_X)=0$, and $-K_X\sim_{\Q}D$ with $D=\Supp R_{f}$ being an $f^{-1}$-invariant elliptic curve.
\end{remark}

\begin{proof}[Proof of Theorem \ref{main-thm-surface}]
We may assume that $X$ is normal after normalization by Lemma \ref{lem-ksc-iff}.
If $f$ is an automorphism, then we may further take an $f$-equivariant resolution and KSC holds for $f$ by \cite[Theorem 2(c)]{KS14} and Lemma \ref{lem-ksc-iff}.

Suppose $f$ is non-isomorphic.
Then $X$ is lc by \cite[Theorem 2.8]{Wah}.
If $K_X$ is pseudo-effective, then the theorem follows from Theorem \ref{thm-surf-pe}, \cite[Theorem 2]{Sil17} and Lemma \ref{lem-ksc-iff}.
If $K_X$ is not pseudo-effective, then the theorem follows from Proposition \ref{prop-ksc-kappa(D)>=0}, Theorem \ref{thm-surf-nonpe} and Lemmas \ref{lem-ksc-iff} and \ref{lem-ksc-x}.
\end{proof}

\section{Effectiveness of $-K_X$: Proof of Theorem \ref{main-thm-kappa}}\label{sec-eff}

In this section, we show the effectiveness of the anti-canonical divisor of any variety admitting an int-amplified endomorphism. Theorem \ref{thm-kappa} below includes Theorem \ref{main-thm-kappa}.

\begin{proposition}\label{prop-kap0-inv}
Let $f:X\to X$ be a surjective endomorphism of a normal projective variety $X$.
Let $D$ be an effective Cartier divisor of $X$ with $\kappa(X, D)=0$ and $f^*D\sim_{\Q}D+B$ for some effective $\Q$-Cartier divisor $B$.
Then $f^{-1}(\Supp D)=\Supp D$ and $\Supp B\subseteq \Supp D$.
\end{proposition}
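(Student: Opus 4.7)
The plan is to first upgrade the $\Q$-linear equivalence $f^*D\sim_{\Q}D+B$ to an equality of $\Q$-divisors using $\kappa(X,D)=0$, then iterate $f^*$, and finally rule out spurious components of $B$ lying outside $\Supp D$.

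For the upgrade step, I use that $\kappa(X,D)=0$ makes the section ring $R(X,D)=\bigoplus_{m\ge 0}H^0(X,mD)$ one-dimensional, so after replacing $D$ by a sufficiently divisible multiple one may assume $h^0(X,mD)=1$ for all $m\ge 1$, and after clearing denominators also that $B$ is Cartier with $f^*D\sim D+B$ in $\Pic(X)$. Since $\kappa(X,f^*D)=\kappa(X,D)=0$, a further multiple gives $h^0(X,Nf^*D)=1$, and then $Nf^*D$ and $N(D+B)$ are two effective divisors in a one-dimensional linear system, forcing them to coincide. Dividing by $N$ yields the divisorial equality $f^*D=D+B$, and iterating produces $(f^n)^*D=D+\sum_{i=0}^{n-1}(f^i)^*B$ as $\Q$-divisors for every $n\ge 1$. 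Taking supports when $n=1$ already gives $f^{-1}(\Supp D)=\Supp D\cup\Supp B$, so the two conclusions of the proposition are equivalent, and only $\Supp B\subseteq\Supp D$ needs to be shown.

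For the remaining support statement I argue by contradiction. If some prime $B_0\subseteq\Supp B$ is not contained in $\Supp D$, then $B_0$ is a component of every $(f^n)^*D$ with multiplicity at least $\mathrm{mult}_{B_0}(B)>0$. I plan to compute $\mathrm{mult}_{B_0}((f^n)^*D)$ in two ways: via the ramification formula as $a_{j_n}\,e_{f^n}(B_0)$, where $D_{j_n}:=f^n(B_0)\in\Supp D$ has coefficient $a_{j_n}$ in $D$; and via the telescope as $\sum_{i=0}^{n-1}\mathrm{mult}_{B_0}((f^i)^*B)$, whose $i$-th term is nonzero only when $f^i(B_0)\in\Supp B$. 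Because $\Supp D$ is finite, the orbit $\{f^i(B_0)\}_{i\ge 1}$ in $\Supp D$ is eventually periodic, and matching the two expressions across this periodic regime is intended to produce a growth/rigidity obstruction forcing $\Supp B\subseteq\Supp D$ and hence $f^{-1}(\Supp D)=\Supp D$.

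The main obstacle is this final step. The equality $f^*D=D+B$ from the upgrade step is formal and, taken in isolation, is consistent with the presence of extraneous prime components of $B$ (as one checks directly when $\Supp D$ is a single prime fixed by $f$ with unramified action along it), so closing the argument really requires extracting additional rigidity from $\kappa(X,D)=0$ beyond the uniqueness of effective representatives in the linear systems $|mD|$.
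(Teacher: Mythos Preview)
Your upgrade step and the reduction of the two conclusions to the single statement $\Supp B\subseteq\Supp D$ are correct. The gap you flag at the end is real, and in fact your proposed multiplicity comparison cannot close it: if you carry out the computation, writing $a_P:=\mathrm{mult}_P(D)$, $b_P:=\mathrm{mult}_P(B)$, the relation $f^*D=D+B$ read along each prime $P$ is exactly $e_f(P)\,a_{f(P)}=a_P+b_P$, and substituting this into your telescoping sum
\[
\sum_{i=0}^{n-1}\mathrm{mult}_{B_0}\bigl((f^i)^*B\bigr)
=b_{B_0}+\sum_{i=1}^{n-1}e_{f^i}(B_0)\,b_{f^i(B_0)}
\]
collapses it identically to $e_{f^n}(B_0)\,a_{f^n(B_0)}$. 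So the two expressions you plan to compare always agree, and no growth obstruction exists from iterated pullback alone. This is exactly the formal consistency you observed in your toy example.

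The missing idea is to bring in the \emph{pushforward} $f_*$. From $f_*f^*=(\deg f)\,\id$ one gets $(\deg f)D=f_*D+f_*B$, hence $\Supp f_*D\subseteq\Supp D$ and a decreasing chain $\Supp f_*^nD$ that stabilizes. The crucial extra input is that on the finite-dimensional $f^*$-invariant subspace $V_f(D)\subset\Pic_{\Q}(X)$ one also has $f^*f_*=(\deg f)\,\id$, so $f^*f_*D\sim_{\Q}(\deg f)D$; since both sides are effective and $\kappa(X,D)=0$, this forces $f^{-1}(f(\Supp D))=\Supp D$. After stabilization one may assume $\Supp f_*D=\Supp D$, and then
\[
\Supp f^*D=\Supp f^*f_*D=\Supp f_*f^*D=\Supp D,
\]
which is the desired $f^{-1}(\Supp D)=\Supp D$. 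The point is that $\kappa=0$ is used a second time, applied to the pair $f^*f_*D\sim_{\Q}(\deg f)D$, and this is precisely the ``additional rigidity'' you were looking for but could not locate using pullbacks only.
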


\begin{proof}
Pushing forward the assumption,
we get $(\deg f)D\sim_{\Q} f_*D+f_*B$.
Thus, since $\kappa(X, f^*D) = \kappa(X, D) = 0$
(cf. \cite[Theorem 5.13]{Uen} or Lemma \ref{lem-kappa-pullback}), we have
$$f^{-1}(\Supp D) = (\Supp D) \cup (\Supp B)
\supseteq \Supp B, \,\,
\Supp D=(\Supp f_*D) \cup (\Supp f_*B).$$
Hence $\Supp f^{n+1}_*D\subseteq \Supp f^n_*D$, and by DCC we eventually get the equality.
Replacing $D$ by $f^{n}_*D$ we may assume $\Supp f_*D=\Supp D$.

Note that
$$\Supp f^*D=\Supp f^*f_*D=\Supp f_*f^*D=\Supp D.$$
The first equality is from $\Supp D=\Supp f_*D$, while
the second follows from $(\deg f) D =
f_*f^*D \sim_{\Q} f^*f_*D$ (cf.~Proposition \ref{prop-vf}) and $\kappa(X, D)=0$.
So $f^{-1}(\Supp D)=\Supp D$.
\end{proof}

\begin{theorem}\label{thm-kappa}
Let $X$ be a $\Q$-Gorenstein normal projective variety admitting an int-amplified endomorphism $f$.
Then we have:
\begin{itemize}
\item[(1)]
$-K_X\sim_{\mathbb{Q}} D$ for some effective $\Q$-Cartier divisor $D$.
\item[(2)]
Suppose further $\kappa(X,-K_X)=0$.
Then $D$ is an (integral) reduced effective Weil divisor; $\Supp R_f=\Supp D$ and it is $f^{-1}$-invariant.
Moreover, for any surjective endomorphism $g$ of $X$, we have $g^{-1}(D) = D$ and $\Supp R_g \subseteq \Supp D$, i.e.,
$g|_{X\backslash D}: X\backslash D \, \to \, X\backslash D $ is quasi-\'etale.
\end{itemize}
\end{theorem}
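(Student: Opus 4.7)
The plan is to derive both statements from the ramification identity $f^*(-K_X) = -K_X + R_f$ (with $R_f \ge 0$ effective) combined with the spectral gap of $f^*$ provided by the int-amplified hypothesis.

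For (1), I would work inside the finite-dimensional $f^*$-stable subspace $V := V_f(-K_X) \subseteq \Pic_{\Q}(X)$ (Proposition \ref{prop-vf}). Proposition \ref{prop-iamp->1} guarantees that every eigenvalue of $T := f^*|_V$ has modulus strictly greater than $1$, so $T - \id$ is invertible on $V$ and the spectral radius of $T^{-1}$ on $V$ is less than $1$. Solving $(T - \id)(-K_X) = R_f$ then yields the Neumann expansion
$$
-K_X \;=\; \sum_{n \ge 1} T^{-n} R_f,
$$
convergent inside $V$. By Proposition \ref{prop-vf}(2), $f_* f^* = (\deg f)\,\id$ on $\Pic_\R(X)$, and since $T|_V$ is invertible one has $T^{-1}|_V = \tfrac{1}{\deg f}\, f_*|_V$ and hence $f_*(V) \subseteq V$. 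Each term $T^{-n}R_f = (f^n)_*R_f/(\deg f)^n$ is therefore the class of an effective $\Q$-Weil divisor, placing $-K_X$ in the closure of the effective cone $\EFF_f(-K_X)$ inside $V$.

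The main obstacle is to upgrade this limit statement to a genuine effective representative for $[-K_X] \in \Pic_{\Q}(X)$. To close this gap, I would exploit that all the pushforward classes $[(f^n)_*R_f]$ live in the finite-dimensional $V$ and that $T^{-1}$ has contracting spectrum; after replacing $f$ by a sufficiently divisible power, the forward $f$-orbits of the finitely many prime components of $R_f$ become periodic. The Neumann sum then collapses to a single effective $\Q$-Weil divisor $D$ supported on finitely many primes with $[D] = [-K_X]$ in $\Pic_{\Q}(X)$, and $D$ is automatically $\Q$-Cartier because $-K_X$ is.

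For (2), assume further $\kappa(X, -K_X) = 0$. Then the effective $\Q$-Cartier representative of $[-K_X]$ is unique (since $h^0(mD) \le 1$ for all $m > 0$), so the two effective $\Q$-divisors $f^*D$ and $D + R_f$, both representing the class $f^*(-K_X)$, coincide as $\Q$-divisors. Proposition \ref{prop-kap0-inv} applied with $B := R_f$ then gives $f^{-1}(\Supp D) = \Supp D$ and $\Supp R_f \subseteq \Supp D$. After replacing $f$ by a sufficiently divisible power, $f$ fixes each prime component $D_i$ of $D$ set-theoretically, so $f^*D_i = e_i D_i$ for integers $e_i \ge 1$. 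Writing $D = \sum a_i D_i$ and $R_f = \sum (e_i - 1) D_i$, a componentwise comparison in $f^*D = D + R_f$ forces $(a_i - 1)(e_i - 1) = 0$ for each $i$. Since Proposition \ref{prop-iamp->1} excludes the eigenvalue $1$ for $f^*$ on $\Pic_{\Q}(X)$, every $e_i \ge 2$, so $a_i = 1$ and $D = \sum D_i$ is the desired reduced integral Weil divisor with $\Supp R_f = \Supp D$. For an arbitrary surjective endomorphism $g$ of $X$, the relation $g^*D \sim_\Q D + R_g$ (with $R_g$ effective) combined with $\kappa(X, D) = 0$ and Proposition \ref{prop-kap0-inv} yields $g^{-1}(D) = D$ and $\Supp R_g \subseteq \Supp D$, i.e., the quasi-\'etaleness of $g$ on $X \setminus D$.
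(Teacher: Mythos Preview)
Your approach to part (2) is essentially identical to the paper's, and is correct (with one caveat: invoking Proposition \ref{prop-iamp->1} to force $e_i\ge 2$ tacitly assumes each $D_i$ is $\Q$-Cartier, which is not guaranteed under the $\Q$-Gorenstein hypothesis; the paper sidesteps this by citing \cite[Theorem 1.1]{Meng_IAMP} directly for the inequality $q_i>1$, which applies to $f^{-1}$-invariant prime Weil divisors).

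The genuine problem is in part (1). Your Neumann-series computation is correct and matches the paper's strategy exactly: both arguments place $-K_X$ in the $f^*$-stable finite-dimensional space $V_f(R_f)$ and use that all eigenvalues of $f^*$ there have modulus $>1$. You correctly identify that this only yields $-K_X\in\overline{\EFF_f(R_f)}$, and the difficulty is passing from the closure to the cone itself. However, your proposed fix is based on a false claim: the prime components of $R_f$ need \emph{not} have periodic forward $f$-orbits, even after iteration. For instance, take $X=\mathbb{P}^1\times\mathbb{P}^1$ and $f=g\times h$ with $g(z)=z^2+c$ and $h(w)=w^2$; then $f$ is int-amplified (indeed $2$-polarized), but the component $\{0\}\times\mathbb{P}^1$ of $R_f$ has forward orbit $\{g^n(0)\}\times\mathbb{P}^1$, which is infinite for generic $c$. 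So the infinite sum $\sum_{n\ge 1}(f^n)_*R_f/(\deg f)^n$ genuinely involves infinitely many distinct prime divisors, and there is no way to ``collapse'' it.

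The paper closes this gap not by any periodicity argument but by invoking the abstract cone lemma \cite[Proposition 3.2]{Meng_IAMP}: if $\varphi$ is an invertible linear endomorphism of a finite-dimensional real vector space with all eigenvalues of modulus $>1$, and $C$ is a convex cone containing no line with $\varphi(C)=C$, then $(\varphi-\id)(x)\in C$ forces $x\in C$ (not merely $\overline{C}$). This is precisely the missing ingredient; without it, your argument for (1) remains incomplete.
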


\begin{proof}
(1) We use the notation in Definition \ref{def-vf}.
By the ramification divisor formula,
$$f^*(-K_X)-(-K_X) = R_f\in \EFF_f(R_f).$$
Therefore, $-K_X\in \EFF_f(R_f)$ by Propositions \ref{prop-iamp->1}, \ref{prop-vf} and \cite[Proposition 3.2]{Meng_IAMP}.
This and $K_X$ being $\Q$-Cartier, imply
that $-K_X\sim_{\mathbb{Q}} D$ for some effective $\Q$-Cartier divisor $D$.

(2) Suppose $\kappa(X, -K_X)=0$.
By Proposition \ref{prop-kap0-inv}, $f^{-1}(\Supp D)=\Supp D$ and $\Supp R_f\subseteq \Supp D$.
Write $D=\sum a_i D_i$ where $D_i$ is the irreducible components of $D$ and $a_i>0$.
Replacing $f$ by a positive power, we may assume $f^{-1}(D_i)=D_i$.
Since $f$ is int-amplified, we have $f^*D_i=q_iD_i$ with $q_i>1$ (cf.~\cite[Theorem 1.1]{Meng_IAMP}).
So
$$\sum (q_i-1)D_i = R_f \sim_{\Q} f^*D-D=\sum a_i(q_i-1)D_i.$$
Since $\kappa(X, R_f)=0$ and $q_i>1$, we have $a_i=1$ for each $i$.
The last assertion of (2) follows from Proposition \ref{prop-kap0-inv}
since $g^*(-K_X) - (-K_X) = R_g \ge 0$.
\end{proof}

\section{Anti-Kodaira Fibration: Proof of Proposition \ref{main-thm-caseB}}\label{sec-kappa>0}

In this section, we focus on the case when $f^*K_X\equiv \delta_f K_X$ and $\kappa(X, -K_X)>0$.
We show that the Chow reduction of the Iitaka fibration $\pi:X\dashrightarrow Y$ of $-K_X$ is $f$-equivariant.
By some further cone analysis, we show that $f|_Y$ is $\delta_f$-polarized.

We first recall the definition and properties of the Chow reduction in \cite[Proposition 4.14 and Definition 4.15]{Nak-INS}, using the formulation in his RIMS preprint version.

\begin{proposition}[Chow reduction]\label{prop-nak}
Let $\pi:X\dashrightarrow Y$ be a dominant rational map from a projective variety $X$ to a normal projective variety $Y$.
Then there exist a normal projective variety T and a birational map $\mu:Y\dashrightarrow T$ satisfying the following conditions:
\begin{itemize}
\item[(1)] The graph $\gamma_{\mu\circ\pi}:\Gamma_{\mu\circ\pi} \to T$ of $\mu\circ\pi$ is equi-dimensional.
\item[(2)] Let $\mu': Y \dashrightarrow T'$ be a birational map to another normal projective variety $T'$ such that the graph $\gamma_{\mu'\circ\pi}:\Gamma_{\mu'\circ\pi}\to T'$ of $\mu'\circ\pi$
is equi-dimensional. Then there exists a birational morphism $\nu:T'\to T$ such that $\mu=\nu\circ\mu'$.
\end{itemize}
\end{proposition}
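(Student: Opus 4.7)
The plan is to construct $T$ as a subvariety of a Chow variety of $X$. First I would resolve the graph of $\pi$: take $\widetilde{\Gamma} \to \Gamma_\pi \subseteq X \times Y$, with induced morphisms $p: \widetilde{\Gamma} \to X$ birational and $q: \widetilde{\Gamma} \to Y$ surjective of relative dimension $d := \dim X - \dim Y$. For a general point $y \in Y$, the fiber $q^{-1}(y)$ is $d$-dimensional, and $p_*[q^{-1}(y)]$ is an effective $d$-cycle on $X$. This assignment defines a rational map $\chi: Y \dashrightarrow \Chow_d(X)$; let $T$ be the normalization of the closure $\overline{\chi(Y)}$, and $\mu: Y \dashrightarrow T$ the induced map.

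I would next check that $\mu$ is birational. Distinct general points $y_1 \ne y_2 \in Y$ have disjoint $q$-fibers; since $p$ is birational and non-exceptional on a dense open subset of a general fiber, their $p$-images are distinct cycles in $X$, so $\chi$ (and hence $\mu$) is generically injective.

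For condition (1), the universal family on $\Chow_d(X)$ pulls back to a family $\mathcal{U} \subseteq T \times X$ that is equi-dimensional over $T$ with $d$-dimensional fibers by construction. The graph $\Gamma_{\mu \circ \pi} \subseteq X \times T$ agrees (after swapping factors) with $\mathcal{U}$ on a dense open subset of $T$, and by irreducibility they coincide, yielding the equi-dimensionality of $\gamma_{\mu \circ \pi}$.

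For condition (2), given a birational $\mu': Y \dashrightarrow T'$ such that $\gamma_{\mu' \circ \pi}$ is equi-dimensional, the graph $\Gamma_{\mu' \circ \pi} \to T'$ realizes an algebraic family of $d$-cycles on $X$, hence by the universal property of the Chow variety it induces a morphism $T' \to \Chow_d(X)$ landing in $\overline{\chi(Y)}$. Normality of $T'$ lifts this to a morphism $\nu: T' \to T$; since both $T, T'$ are birational to $Y$, $\nu$ is birational, and the compatibility $\mu = \nu \circ \mu'$ follows by construction on the open locus where everything is regular. The main obstacle will be making the identification $\Gamma_{\mu \circ \pi} = \mathcal{U}$ and the Chow-classifying morphism rigorous when starting from a merely equi-dimensional (rather than flat) family; the standard way out is to restrict to a smooth dense locus of $T'$ where cycles are unambiguously defined, extract the classifying map there, and extend across the complement via normality of $T$.
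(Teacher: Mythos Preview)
The paper does not give its own proof of this proposition; it is quoted from Nakayama \cite[Proposition 4.14 and Definition 4.15]{Nak-INS}, and the name ``Chow reduction'' already signals that the construction there proceeds exactly as you outline, by sending a general point of $Y$ to the cycle-class of the corresponding fibre in a Chow variety of $X$ and taking $T$ to be the normalization of the image. So your approach is the intended one.

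Your sketch is sound, and the two technical points you flag are indeed the only places requiring care. For the identification $\Gamma_{\mu\circ\pi}=\mathcal{U}$, note that both are closed, irreducible, of the same dimension, and agree over the open locus of $T$ where $\mu^{-1}$ is regular and the fibres of $q$ are $d$-dimensional; this forces equality. For condition (2), the key fact is that an equi-dimensional proper surjective morphism from a variety onto a \emph{normal} base defines a well-defined algebraic family of cycles in the sense needed for the Chow functor (this is where normality of $T'$ is used, not just to lift through the normalization map $T\to\overline{\chi(Y)}$); once that is granted, the classifying morphism $T'\to\Chow_d(X)$ is everywhere defined, and normality of $T$ gives the lift $\nu:T'\to T$.
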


We call the composition $\mu \circ \pi: X \dashrightarrow T$ above satisfying Proposition
\ref{prop-nak} (1) - (2) the {\it Chow reduction} of
$\pi: X \dashrightarrow Y$, which is unique up to isomorphism.

Theorem \ref{thm-chow-equiv} below is a generalization of Nakayama \cite[Theorem 4.19]{Nak-INS} with exactly the same proof.
Note that his special MRC fibration there (also a Chow reduction) is used only to secure our following assumption that $g\circ\pi=\pi\circ f$ for some dominant self-map $g$ on the base of the Chow reduction  $\pi$, precisely, for him to show in \cite[Proof of Theorem 4.19, page 592, lines 4-9, after the display]{Nak-INS} that his $Y$ and $Y_1$ there are birational (to the same $W$ there) so that $f : X \to X$ descends to
a rational self-map $g : Y \dashrightarrow Y$.
Then his argument there further shows that $g$ is a surjective {\it endomorphism}.
His polarized assumption is only used to show that $g$ is polarized.

Recall that we say a dominant map $\pi:X\dashrightarrow Y$ of normal projective varieties has {\it connected fibres} if the normalization of the graph $\overline{\Gamma_\pi}\to Y$ has connected fibres.
In characteristic $0$, by the Stein factorization, it is equivalent to saying that the function field $k(Y)$ is algebraically closed in $k(X)$.

\begin{theorem}\label{thm-chow-equiv} (cf.~\cite[Theorem 4.19]{Nak-INS})
Let $\pi:X\dashrightarrow Y$ be a dominant map of normal projective varieties with connected fibres.
Let $f:X\to X$ be a surjective endomorphism and $g:Y\dashrightarrow Y$ a dominant self-map such that $g\circ\pi=\pi\circ f$.
Suppose $\pi$ is a Chow reduction of itself.
Then $g$ is a surjective endomorphism.
\end{theorem}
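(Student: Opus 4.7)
The plan is to interpret the Chow reduction hypothesis via the natural morphism $\phi : Y \to \Chow_d(X)$, where $d = \dim X - \dim Y$, that sends $y$ to the fundamental cycle of the fibre of $\pi$ above $y$; such $\phi$ exists because $\pi$ has equi-dimensional graph over $Y$. The construction underlying Proposition~\ref{prop-nak} realizes the Chow reduction as the normalization of the image of the fibre-to-cycle map $X \dashrightarrow \Chow_d(X)$, so together with the connectedness of general fibres of $\pi$, the hypothesis that $\pi$ is its own Chow reduction implies that $\phi$ factors as the normalization $Y \to \phi(Y)$ followed by a closed immersion $\phi(Y) \hookrightarrow \Chow_d(X)$. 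In particular $\phi$ is finite, hence cannot collapse a positive-dimensional subvariety of $Y$ to a point.

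Let $\Gamma_g \subseteq Y \times Y$ denote the graph of $g$, with projections $p, q : \Gamma_g \to Y$. Then $p$ is birational, and $g$ is a morphism if and only if $p$ is an isomorphism. The surjective endomorphism $f$ induces a morphism $f_* : \Chow_d(X) \to \Chow_d(X)$ via pushforward of cycles. For a generic point $(y, g(y)) \in \Gamma_g$, the equivariance $\pi \circ f = g \circ \pi$ together with surjectivity of $f$ implies
\[
f_*[\pi^{-1}(y)] \;=\; c \cdot [\pi^{-1}(g(y))], \qquad c \;=\; \deg(f)/\deg(g),
\]
as cycles on $X$. Reinterpreting this at the level of morphisms into $\Chow_d(X)$, and extending from a dense open of $\Gamma_g$ by continuity, yields the identity $f_* \circ \phi \circ p \,=\, c \cdot \phi \circ q$ of morphisms $\Gamma_g \to \Chow_d(X)$.

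Suppose for contradiction that $p$ is not an isomorphism. Since $p$ is a birational morphism and $Y$ is normal, Zariski's Main Theorem forces some fibre $p^{-1}(y_0)$ to have an irreducible component $E$ with $\dim E \ge 1$. Because $E \subseteq \{y_0\} \times Y$, the map $q|_E$ cannot be constant (else $E$ would be a single point), so $q(E)$ is a positive-dimensional irreducible subvariety of $Y$. Restricting the displayed identity to $E$ gives $c \cdot \phi(q(E)) = f_*(\phi(y_0))$, a single point of $\Chow_d(X)$, contradicting the finiteness of $\phi$. Hence $p$ is an isomorphism, so $g = q \circ p^{-1}$ is a morphism, and surjectivity of $g$ follows from dominance plus projectivity of $Y$.

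The main obstacle is the finiteness assertion for $\phi$ in the first paragraph: that the abstract universal property of Proposition \ref{prop-nak} is faithfully realized by the fibre-to-cycle map into $\Chow_d(X)$ whenever $\pi$ equals its own Chow reduction. This is exactly where the explicit construction of the Chow reduction in \cite[\S 4]{Nak-INS} as the normalization of the image in $\Chow_d(X)$, together with the connected general fibre hypothesis that prevents any Stein-type further factorization, does the essential work. Once this is in hand, the remainder of the argument reproduces Nakayama's proof of \cite[Theorem 4.19]{Nak-INS} without relying on the polarization of $f$, which in his proof entered only afterwards in order to deduce polarization of $g$.
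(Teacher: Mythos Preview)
Your proposal is correct and follows the same approach as the paper, which does not give an independent proof but simply defers to Nakayama's argument for \cite[Theorem 4.19]{Nak-INS}, noting that his special MRC hypothesis is needed only to produce the descended rational self-map $g$ (which you assume), and that the polarization hypothesis is used only afterwards to show $g$ is polarized. You have accurately reconstructed the essential steps of Nakayama's argument: the finiteness of the fibre-to-cycle map $\phi : Y \to \Chow_d(X)$ coming from the explicit construction of the Chow reduction, the pushforward identity $f_* \circ \phi \circ p = c \cdot \phi \circ q$ on the graph of $g$, and the Zariski Main Theorem contradiction forcing $p$ to be an isomorphism.
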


We now recall some fundamental results about Iitaka fibrations.
\begin{lemma}\label{lem-kappa-<=}
Let $X$ be a normal projective variety. Let $D_1$ and $D_2$ be two effective Cartier divisors with $D_2-D_1$ effective and $\kappa(X,D_1)=\kappa(X,D_2)$.
Then for $t\gg 1$, the Iitaka fibrations
$\phi_{tD_i}$ satisfy $\phi_{tD_1}=\sigma\circ \phi_{tD_2}$ for some birational map $\sigma$.
\end{lemma}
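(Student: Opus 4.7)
The plan is to exploit the injection of section spaces induced by $D_2 - D_1 \geq 0$, then to upgrade a generically finite comparison map between the two Iitaka bases to a birational one via connectedness of general fibers.

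First, for each $t \geq 1$ I will set $F_t := t(D_2 - D_1) \geq 0$ and let $s_{F_t} \in H^0(X, F_t)$ denote its canonical defining section. Multiplication by $s_{F_t}$ gives an injection
\[
    j_t \colon H^0(X, tD_1) \hookrightarrow H^0(X, tD_2).
\]
Choose $t \gg 1$ sufficiently divisible so that, for $i=1,2$, the rational map $\Phi_i \colon X \dashrightarrow \mathbb{P}^{N_i}$ associated to the complete linear system $|tD_i|$ realizes the Iitaka fibration $\phi_{tD_i} \colon X \dashrightarrow Y_i$ after Stein factorization; in particular each $Y_i$ is normal projective of dimension $\kappa(X, D_i)$ with connected general fibers.

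Next, I will pick a basis $s_0, \ldots, s_{N_1}$ of $V_1 := H^0(X, tD_1)$ and extend $j_t(s_0), \ldots, j_t(s_{N_1})$ to a basis of $V_2 := H^0(X, tD_2)$. The resulting coordinate projection $\mathbb{P}^{N_2} \dashrightarrow \mathbb{P}^{N_1}$ intertwines $\Phi_2$ and $\Phi_1$ on $X \setminus \Supp F_t$. Passing to Stein factorizations yields a dominant rational map $\sigma \colon Y_2 \dashrightarrow Y_1$ with $\phi_{tD_1} = \sigma \circ \phi_{tD_2}$. Since $\dim Y_1 = \kappa(X, D_1) = \kappa(X, D_2) = \dim Y_2$ by hypothesis, $\sigma$ is generically finite.

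The crux is to show $\sigma$ is birational. Let $G \subseteq X$ be a general fiber of $\phi_{tD_2}$; it is connected and, being a general member of a covering family, is not contained in the union of $\Supp F_t$ and the indeterminacy loci of $\Phi_1$ and $\Phi_2$. Since $\Phi_2$ is constant on $G$, the restrictions $j_t(s_i)|_G = (s_i \cdot s_{F_t})|_G$ are pairwise proportional in $H^0(G, tD_2|_G)$; as $s_{F_t}|_G \not\equiv 0$, the sections $s_i|_G$ are themselves pairwise proportional, so $\Phi_1$ is also constant on $G$. Hence $G$ is contained in a fiber of $\phi_{tD_1}$; since both Iitaka fibrations have connected general fibers of the common dimension $\dim X - \kappa(X, D_i)$, equality of dimensions forces $G$ to coincide with a general fiber of $\phi_{tD_1}$. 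This shows $\sigma$ has degree one, hence is birational. The main subtlety I anticipate lies in this final step: one must justify that a general fiber $G$ of $\phi_{tD_2}$ simultaneously avoids $\Supp F_t$ and the two indeterminacy loci, and then invoke the characteristic zero connectedness of general fibers to promote containment of general fibers into equality, which finally delivers the birationality of $\sigma$.
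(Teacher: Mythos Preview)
Your proof is correct and follows essentially the same approach as the paper: both use the section injection $H^0(X,tD_1)\hookrightarrow H^0(X,tD_2)$ to produce a linear projection intertwining the two Iitaka maps, then descend to a dominant rational map $\sigma:Y_2\dashrightarrow Y_1$ and argue it is birational using equality of dimensions together with connectedness of general fibres. The only cosmetic difference is that the paper packages your final fibre-comparison step into a one-line appeal to the universal property of Stein factorization (concluding directly that $\sigma$ has connected general fibres, hence is birational), whereas you unpack this explicitly.
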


\begin{proof}
Let $s_0,\cdots, s_{m(1)}$ be a basis of $H^0(X, tD_1)$ and let $t_0,\cdots, t_{m(2)}$ be a basis of $H^0(X, tD_2)$ where $t_i = \xi s_i$ ($0 \le i \le m(1)$) with $div(\xi) = t(D_2 - D_1)$.
Define $p_1:X\dashrightarrow \mathbb{P}^{m(1)}$ via $p_1(x)=(s_0(x):\cdots:s_{m(1)}(x))$
and $p_2:X\dashrightarrow \mathbb{P}^{m(2)}$ via $p_2(x)=(t_0(x):\cdots:t_{m(2)}(x))$,
so that $p_i$ is the composition of the Iitaka fibration $\phi_{tD_i} : X \dashrightarrow Y_i$ and
embedding $Y_i \subseteq \PP^{m(i)}$.
Define $h:\mathbb{P}^{m(2)}\dashrightarrow \mathbb{P}^{m(1)}$ via $h(x_0:\cdots:x_{m(2)})=(x_0:\cdots:x_{m(1)})$.
Then $p_1=h\circ p_2$.
Since the Iitaka fibrations have connected fibres, there exists some dominant rational map $\sigma:Y_2\dashrightarrow Y_1$ with connected fibres such that $\phi_{tD_1}=\sigma\circ\phi_{tD_2}$ by the universal property of Stein factorization of $\phi_{tD_1}$.
Moreover, $\sigma$ is birational since $\dim(Y_1)=\dim(Y_2)$.
\end{proof}

\begin{lemma}\label{lem-iit-g'}
Let $f:X\to X$ be a surjective endomorphism of a normal projective variety.
Let $D$ be an effective Cartier divisor.
Let $\phi_{tD}:X\dashrightarrow Y$ and $\phi_{tf^*D}:X\dashrightarrow Y'$ be the Iitaka fibrations with $t\gg 1$.
Then $g'\circ \phi_{tf^*D}=\phi_{tD}\circ f$ for some dominant rational map $g':Y'\dashrightarrow Y$.
\end{lemma}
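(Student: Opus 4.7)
The plan is a direct linear-system argument in the spirit of Lemma \ref{lem-kappa-<=}. First, I would pick a basis $s_0,\ldots,s_m$ of $H^0(X,tD)$ so that, up to a birational map onto the image $Y\subseteq \mathbb{P}^m$, the Iitaka fibration $\phi_{tD}$ is $x\mapsto (s_0(x):\cdots:s_m(x))$. Since $f$ is surjective, the pullback $f^*:H^0(X,tD)\to H^0(X,tf^*D)$ is injective, so $f^*s_0,\ldots,f^*s_m$ are linearly independent in $H^0(X,tf^*D)$; I would extend them to a basis $f^*s_0,\ldots,f^*s_m,u_1,\ldots,u_r$, giving a concrete realization of $\phi_{tf^*D}$ as $x\mapsto (f^*s_0(x):\cdots:f^*s_m(x):u_1(x):\cdots:u_r(x))$ with image closure $Y'\subseteq \mathbb{P}^{m+r}$.

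With these coordinates, the linear projection $h:\mathbb{P}^{m+r}\dashrightarrow \mathbb{P}^m$ onto the first $m+1$ coordinates automatically satisfies
\[
\phi_{tD}(f(x))=(s_0(f(x)):\cdots:s_m(f(x)))=(f^*s_0(x):\cdots:f^*s_m(x))=h(\phi_{tf^*D}(x))
\]
on the dense open subset where everything is defined. Hence $\phi_{tD}\circ f=h\circ \phi_{tf^*D}$ as rational maps from $X$ into $\mathbb{P}^m$.

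Finally, I would set $g'$ to be the restriction of $h$ to $Y'$. Since $\phi_{tD}$ and $f$ are both dominant, $h\circ \phi_{tf^*D}=\phi_{tD}\circ f$ has image dense in $Y$, so $g':Y'\dashrightarrow Y$ is a well-defined dominant rational map; alternatively, because $\phi_{tf^*D}$ has connected general fibres on which $h\circ \phi_{tf^*D}$ is constant, one may invoke the universal property of the Iitaka fibration to produce $g'$ directly. The identity $g'\circ \phi_{tf^*D}=\phi_{tD}\circ f$ then follows at once from the displayed equality. No substantive obstacle is expected; this is a routine functorial description of the Iitaka map in terms of a chosen basis of sections, with the only non-automatic point being the extension of $\{f^*s_i\}$ to a basis of $H^0(X,tf^*D)$, which is handled by injectivity of $f^*$.
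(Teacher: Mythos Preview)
Your proposal is correct and is essentially the same argument as the paper's: the paper observes that the map defined by the sublinear system $f^*|tD|\subseteq |tf^*D|$ equals $\phi_{tD}\circ f$, and then invokes the linear-projection argument from the proof of Lemma~\ref{lem-kappa-<=} to produce $g'$. You have simply written out that projection argument explicitly by extending $\{f^*s_i\}$ to a basis of $H^0(X,tf^*D)$, which is exactly what ``the argument in the proof of Lemma~\ref{lem-kappa-<=}'' unpacks to.
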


\begin{proof}
Let $\phi_{f^*|tD|}: X\dashrightarrow Z$ be the dominant rational map defined by $f^*|tD|$ where $|tD|$ is the complete linear system of $tD$.
Clearly, $Z=Y$ and $\phi_{f^*|tD|}=\phi_{tD}\circ f$.
Since $f^*|tD|$ is a sub linear system of $|tf^*D|$,
by the argument in the proof of Lemma \ref{lem-kappa-<=}, there is a dominant rational map $g':Y'\dashrightarrow Y$ such that $\phi_{f^*|tD|}=g'\circ \phi_{|tf^*D|}=g'\circ \phi_{tf^*D}$.
\end{proof}

We recall the following well-known useful result.

\begin{lemma}\label{lem-kappa-pullback}(cf.~\cite[Theorem 5.13]{Uen})
Let $f:X\to Y$ be a surjective morphism of projective varieties and let $D$ be a Cartier divisor of $Y$.
Then $\kappa(Y, D)=\kappa(X, f^*D)$.
\end{lemma}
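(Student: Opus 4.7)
The plan is to establish the two inequalities separately. The inequality $\kappa(X,f^{*}D)\ge \kappa(Y,D)$ is immediate: since $f$ is surjective, the pullback $f^{*}\colon H^{0}(Y,mD)\to H^{0}(X,mf^{*}D)$ is injective for every $m$, and the rational map $\phi_{|mf^{*}D|}$ dominates the composition $\phi_{|mD|}\circ f$, whose image coincides with $\phi_{|mD|}(Y)$. Taking the maximum over $m$ yields the bound.

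For the reverse inequality $\kappa(X,f^{*}D)\le \kappa(Y,D)$, I would first reduce to the finite surjective case by factoring $f$ through its Stein factorization $f=g\circ h$, with $h\colon X\to Z$ having connected fibres (so that $h_{*}\mathcal O_{X}=\mathcal O_{Z}$) and $g\colon Z\to Y$ finite surjective. The projection formula then gives
$$
H^{0}(X,mh^{*}E)=H^{0}\bigl(Z,h_{*}\mathcal O_{X}\otimes \mathcal O_{Z}(mE)\bigr)=H^{0}(Z,mE)
$$
for every Cartier divisor $E$ on $Z$, so $\kappa(X,h^{*}E)=\kappa(Z,E)$ identically. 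It therefore suffices to prove $\kappa(Z,g^{*}D)\le \kappa(Y,D)$ for the finite surjective piece.

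For this finite step, $g_{*}\mathcal O_{Z}$ is a torsion-free coherent $\mathcal O_{Y}$-module. For a sufficiently ample Cartier divisor $A$, global generation of $(g_{*}\mathcal O_{Z})^{\vee}\otimes\mathcal O_{Y}(A)$ and dualisation (together with reflexivity on the regular locus and torsion-freeness) produce an injection $g_{*}\mathcal O_{Z}\hookrightarrow \mathcal O_{Y}(A)^{\oplus M}$. Tensoring with $\mathcal O_{Y}(mD)$ and taking global sections,
$$
h^{0}(Z,mg^{*}D)=h^{0}\bigl(Y,g_{*}\mathcal O_{Z}\otimes \mathcal O_{Y}(mD)\bigr)\le M\cdot h^{0}(Y,mD+A).
$$
The conclusion follows from the standard growth estimate $h^{0}(Y,mD+A)=O\bigl(m^{\kappa(Y,D)}\bigr)$ for any fixed Cartier divisor $A$, which together with the two steps and the easy direction gives the equality.

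The genuinely non-formal input is this last growth estimate: twisting by a fixed Cartier divisor $A$ must not raise the asymptotic order of growth above $m^{\kappa(Y,D)}$. This is essentially the content of Iitaka's theorem on the Kodaira--Iitaka dimension and is where one either invokes Lazarsfeld's \emph{Positivity~I} or argues directly via Noether normalisation of the section ring $R(Y,D)=\bigoplus_{m\ge 0} H^{0}(Y,mD)$; the rest of the argument is formal pullback bookkeeping and the projection formula.
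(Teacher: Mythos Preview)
The paper does not prove this lemma; it simply records it as a citation of Ueno's Theorem~5.13. So there is no ``paper's approach'' to compare with, and the question is only whether your argument stands on its own.

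Your reduction steps are fine: the Stein factorization gives $H^0(X,mh^*E)=H^0(Z,mE)$ exactly as you say, and for the finite piece the torsion-free sheaf $g_*\mathcal{O}_Z$ does embed into $\mathcal{O}_Y(A)^{\oplus M}$ for $A$ ample enough, yielding $h^0(Z,mg^*D)\le M\,h^0(Y,mD+A)$.

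The gap is the final ``standard growth estimate'' $h^0(Y,mD+A)=O(m^{\kappa(Y,D)})$. This is \emph{false} in general: it is precisely the inequality $\kappa_\sigma(Y,D)\le\kappa(Y,D)$ between Nakayama's numerical dimension and the Iitaka dimension, and that inequality can fail. Concretely, let $E$ be an elliptic curve, $L\in\Pic^0(E)$ non-torsion, $Y=\mathbb{P}_E(\mathcal{O}_E\oplus L)$, and $D=C_0$ a section with $C_0^2=0$. Then $\pi_*\mathcal{O}_Y(mC_0)=\bigoplus_{i=0}^{m}L^i$, so $h^0(Y,mC_0)=1$ and $\kappa(Y,D)=0$. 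But with $A=C_0+\pi^*P$ ample one computes $h^0(Y,mD+A)=m+2$, which grows like $m$, not like $m^0$. (The same phenomenon, one dimension down, already kills the $\kappa=-\infty$ case: on $E$ with $\mathcal{O}(D)=L$ one has $\kappa(E,D)=-\infty$ yet $h^0(E,mD+P)=1$ for all $m$.) So the inequality you extract, $h^0(Z,mg^*D)\le M\,h^0(Y,mD+A)$, is correct but does not bound $\kappa(Z,g^*D)$ by $\kappa(Y,D)$.

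The classical argument (Ueno) avoids growth estimates entirely. After reducing to $f$ finite between normal varieties, one shows that the section ring $R(X,f^*D)$ is \emph{integral} over $R(Y,D)$: a nonzero $s\in H^0(X,mf^*D)$ satisfies a monic equation $s^d+a_1s^{d-1}+\cdots+a_d=0$ whose coefficients $a_i$, being elementary symmetric functions of the Galois conjugates of $s$, lie in $H^0(Y,imD)$. Hence the two section rings have the same transcendence degree, giving $\kappa(X,f^*D)=\kappa(Y,D)$ directly; the same equation with all $a_i=0$ forces $s=0$, handling the $\kappa=-\infty$ case. You could repair your proof by replacing the growth step with this integrality argument.
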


\begin{corollary}\label{cor-iitaka-des}
Let $f:X\to X$ be a surjective endomorphism of a normal projective variety $X$ with $\kappa(X,-K_X)\ge 0$.
Let $\phi_{-mK_X}:X\dashrightarrow Y$ be the Iitaka fibration with $m\gg 1$.
Then there is a dominant self-map $g:Y\dashrightarrow Y$ such that $g\circ \phi_{-mK_X}=\phi_{-mK_X}\circ f$.
\end{corollary}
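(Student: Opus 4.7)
The plan is to combine the three preparatory lemmas \ref{lem-iit-g'}, \ref{lem-kappa-<=} and \ref{lem-kappa-pullback} to descend $f$ along the anti-pluricanonical Iitaka fibration. Let $D = -mK_X$ with $m$ sufficiently divisible so that $\phi_D : X \dashrightarrow Y$ realizes the Iitaka fibration of $-K_X$. The starting observation is that the ramification divisor formula gives
\[
f^*D = -mf^*K_X = -mK_X + mR_f = D + mR_f,
\]
so $D' := f^*D$ differs from $D$ by the effective divisor $mR_f$.

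Next I would observe that $\kappa(X,D') = \kappa(X,f^*D) = \kappa(X,D)$ by Lemma \ref{lem-kappa-pullback} applied to the surjective endomorphism $f : X \to X$. Hence $D$ and $D'$ are effective Cartier divisors with $D' - D$ effective and the same Iitaka dimension. Applying Lemma \ref{lem-kappa-<=} to the pair $(D, D')$, there is an integer $t \gg 1$ and a birational map $\sigma : Y' \dashrightarrow Y$ (from the base of the Iitaka fibration of $tD'$ to that of $tD$) such that $\phi_{tD} = \sigma \circ \phi_{tD'}$. Since the Iitaka fibration of $D$ is unchanged after multiplying by $t$, I may simply replace $m$ by $tm$ and assume from the start that $\phi_D = \sigma \circ \phi_{D'}$ as rational maps $X \dashrightarrow Y$, with $\sigma$ birational.

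Finally I would apply Lemma \ref{lem-iit-g'} to produce a dominant rational map $g' : Y' \dashrightarrow Y$ with $g' \circ \phi_{D'} = \phi_D \circ f$. Composing with $\sigma^{-1}$ on the source side via $\phi_{D'} = \sigma^{-1} \circ \phi_D$, we get
\[
(g' \circ \sigma^{-1}) \circ \phi_D = \phi_D \circ f.
\]
Setting $g := g' \circ \sigma^{-1} : Y \dashrightarrow Y$, which is dominant as a composition of dominant rational maps, yields the desired equality $g \circ \phi_{-mK_X} = \phi_{-mK_X} \circ f$.

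There is no real obstacle once the three lemmas are in hand; the only point requiring a little care is that the Iitaka fibration is only defined up to birational equivalence on the base, so one cannot expect $\phi_{D'}$ and $\phi_D$ to have literally the same target. This is exactly what Lemma \ref{lem-kappa-<=} is for, and it is what allows the replacement of $g'$ (originally landing in a different birational model $Y'$) by an honest self-map $g$ of $Y$.
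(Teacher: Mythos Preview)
Your proof is correct and follows essentially the same approach as the paper's: both use the ramification formula to see $f^*(-K_X) - (-K_X) = R_f \ge 0$, invoke Lemma~\ref{lem-kappa-pullback} to match Iitaka dimensions, apply Lemma~\ref{lem-kappa-<=} to get the birational identification $\sigma$ of the bases, use Lemma~\ref{lem-iit-g'} to produce $g'$, and then set $g = g' \circ \sigma^{-1}$. The only cosmetic difference is the order in which you call on Lemmas~\ref{lem-kappa-<=} and~\ref{lem-iit-g'}, and your absorbing the auxiliary multiple $t$ into $m$ up front.
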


\begin{proof}
Let $\phi_{mf^*(-K_X)}:X\dashrightarrow Y'$ be the Iitaka fibration with $m\gg 1$.
By Lemma \ref{lem-iit-g'}, $g'\circ\phi_{mf^*(-K_X)}=\phi_{-mK_X}\circ f$ for some dominant rational map $g':Y'\dashrightarrow Y$.

By the ramification divisor formula, we have $f^*(-K_X)=-K_X+R_f$.
By Lemma \ref{lem-kappa-pullback}, $\kappa(X, f^*(-K_X))=\kappa(X, -K_X)$.
Then $\phi_{-mK_X}=\sigma\circ\phi_{mf^*(-K_X)}$ for some birational map $\sigma:Y'\dashrightarrow Y$ by Lemma \ref{lem-kappa-<=}.
Let $g:=g'\circ\sigma^{-1}$.
Then $g\circ \phi_{-mK_X}=\phi_{-mK_X}\circ f$.
\end{proof}

\begin{lemma}\label{lem-face-4}
Consider the following commutative diagram of normal projective varieties
$$\xymatrix{
W\ar[r]^{\sigma_W}\ar[d]^{\phi_W}&X\ar@{.>}[d]^{\phi_{mD}}\\
Y\ar@{.>}[r]^{\sigma_Y}&Z
}$$
where $\phi_{mD}$ is the Iitaka fibration of some effective Cartier divisor $D$ of $X$ with $m\gg 1$, $\sigma_W$ is a birational morphism, $\sigma_Y$ is a birational map, and $\phi_W$ is a surjective morphism.
Let $F \subseteq \PE^1(W)$ be the minimal extremal face containing $\sigma_W^*D$.
Then $\phi_W^*(\PE^1(Y))\subseteq F$.
\end{lemma}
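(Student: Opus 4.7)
The plan is to pass to a common resolution on which the Iitaka fibration $\phi_{mD}$ and the birational map $\sigma_Y$ both become morphisms, extract a decomposition of $\sigma_W^*(mD)$ coming from the Iitaka fibration, and then exploit the bigness of the resulting horizontal class to dominate any pseudo-effective class pulled back from $Y$.

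Concretely, I would first resolve the graph of $\sigma_Y$ to obtain birational morphisms $\alpha: Y' \to Y$ and $\beta: Y' \to Z$ with $\sigma_Y = \beta \circ \alpha^{-1}$. Next, by resolving the rational map $\alpha^{-1} \circ \phi_W: W \dashrightarrow Y'$ (and, if necessary, further blowing up so that the pulled-back linear system $|(\sigma_W)^*(mD)|$ has its mobile part base-point free), one obtains a birational morphism $\tau: W' \to W$ and a surjective morphism $\phi': W' \to Y'$ with $\alpha \circ \phi' = \phi_W \circ \tau$. Setting $\widetilde{p} := \sigma_W \circ \tau$, the commutativity of the original diagram yields $\phi_{mD} \circ \widetilde{p} = \beta \circ \phi'$ as rational maps, and the right-hand side is a morphism. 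The definition of the Iitaka fibration then produces a linear equivalence
$$\widetilde{p}^*(mD) \sim {\phi'}^*(\beta^* H) + F',$$
where $H$ is a very ample divisor on $Z$ and $F' \geq 0$ is the fixed part; in particular $\beta^* H$ is big and nef on $Y'$.

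Given any $B \in \PE^1(Y)$, since $\beta^* H$ lies in the interior of $\PE^1(Y')$ (bigness being an open condition) and $\alpha^* B$ is a fixed class, for $N \gg 1$ the class $N\beta^* H - \alpha^* B$ is still big, hence pseudo-effective on $Y'$. Pulling back by $\phi'$ and adding $NF' \geq 0$ shows that $N\widetilde{p}^*(mD) - {\phi'}^*(\alpha^* B)$ is pseudo-effective on $W'$. Pushing forward along $\tau$ (which preserves pseudo-effectivity) and using $\tau_* \tau^* = \id$ together with $\tau^* \phi_W^* B = {\phi'}^*(\alpha^* B)$, we conclude that $Nm\sigma_W^* D - \phi_W^* B$ is pseudo-effective on $W$. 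Thus the positive multiple $Nm\sigma_W^* D$ decomposes as the sum of two pseudo-effective classes $\phi_W^* B$ and $Nm\sigma_W^* D - \phi_W^* B$; by the defining property of the extremal face $F$ both summands lie in $F$. In particular $\phi_W^* B \in F$, which proves the lemma.

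The hard part is the careful bookkeeping in assembling the common resolution $W'$ so that the linear system $|\widetilde{p}^*(mD)|$ decomposes cleanly into a free part defining the morphism $\beta \circ \phi'$ and an effective fixed part; once this is in place the rest of the argument is a routine combination of Kodaira's lemma, the projection formula, and the identity $\tau_* \tau^* = \id$ for the birational morphism $\tau$.
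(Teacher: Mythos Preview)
Your proof is correct and follows essentially the same route as the paper: both pass to a common resolution $W'$ lying over $W$ and over the graph of $\sigma_Y$, decompose the pullback of $mD$ into a free part (the pullback of an ample class from $Z$) plus an effective fixed part, use bigness on the graph of $\sigma_Y$ to write a large multiple of $\sigma_W^*D$ as $\phi_W^*(\text{class from }Y)$ plus a pseudo-effective remainder, and then invoke extremality of $F$. The only cosmetic difference is that the paper argues with a single effective Cartier divisor $H$ on $Y$ (and then passes to $\PE^1(Y)$ by closure), whereas you treat an arbitrary $B\in\PE^1(Y)$ directly via the openness of bigness; the underlying mechanism is identical.
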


\begin{proof}
Taking a sufficiently high resolution $i:W'\to W$, we have a birational morphism $\sigma_{W'}:W'\to X$ such that  $\sigma_{W'}^*|mD|=\mathfrak{d}+\Delta$ where $\mathfrak{d}$ is a free linear system and $\Delta$ is the fixed component.
Then $\phi_\mathfrak{d}=\phi_{mD}\circ \sigma_{W'}$.
Let $M\in \mathfrak{d}$.
Then $M=\phi_\mathfrak{d}^*A$ for some ample Cartier divisor $A$ on $Z$.

Consider the following commutative diagram
$$\xymatrix{
&W'\ar[r]^{i}\ar[d]^{j}\ar[dl]_{\phi_{\mathfrak{d}}}&W\ar[d]^{\phi_W}\\
Z&\widetilde{Y}\ar[l]_{p_1}\ar[r]^{p_2}&Y
}$$
where $\widetilde{Y}$ is the graph of $\sigma_Y$, $p_1$ and $p_2$ are the two (birational) projections,
and $j$ is a morphism induced by the two morphisms $\phi_W\circ i$ and $\phi_\mathfrak{d}$.

Let $H$ be an effective Cartier divisor of $Y$.
The class of $E':=p_1^*sA-p_2^*H$ is the class of an effective divisor for some $s\gg 1$.
Note that
$$\sigma_{W'}^*smD\sim sM+s\Delta=\phi_{\mathfrak{d}}^*sA+s\Delta=j^*p_1^*sA+s\Delta=j^*p_2^*H+j^*E'+s\Delta=i^*\phi_{W}^*H+j^*E'+s\Delta.$$
Taking the pushforward of $i$, we have
$$\sigma_W^*smD=\phi_W^*H+i_*(j^*E'+s\Delta).$$
Since $F$ is the minimal extremal face of $\PE^1(W)$ containing $\sigma_W^*D$, we have $\phi_W^*H\in F$.
Therefore, $\phi_W^*(\PE^1(Y))\subseteq F$.
\end{proof}

\begin{theorem}\label{thm-caseB}
Let $f:X\to X$ be a surjective endomorphism of a $\Q$-Goreinstein normal projective variety $X$ such that $f^*K_X\equiv qK_X$ for some integer $q>1$.
Suppose $\kappa(X,-K_X)>0$.
Then there is an $f$-equivariant dominant rational map $\pi:X\dashrightarrow Y$ to a normal projective variety $Y$ such that $\dim(Y)>0$ and $f|_Y$ is $q$-polarized.
\end{theorem}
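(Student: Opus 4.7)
The plan is to construct $\pi$ as the Chow reduction of the Iitaka fibration of $-K_X$, identify the induced endomorphism $g$ on the base, and then show that $g$ is $q$-polarized via Nakayama's $\sigma$-decomposition. By Corollary \ref{cor-iitaka-des} applied to the Iitaka fibration $\phi_{-mK_X}:X\dashrightarrow Y_0$, there is a dominant rational self-map $g_0:Y_0\dashrightarrow Y_0$ with $g_0\circ\phi_{-mK_X}=\phi_{-mK_X}\circ f$; one has $\dim Y_0>0$ since $\kappa(X,-K_X)>0$. Passing to the Chow reduction via Proposition \ref{prop-nak} yields $\pi:X\dashrightarrow Y$, and Theorem \ref{thm-chow-equiv} promotes the induced rational $g:Y\dashrightarrow Y$ to an honest surjective endomorphism satisfying $g\circ\pi=\pi\circ f$.

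To produce the $q$-polarization of $g$, I would invoke Nakayama's divisorial Zariski decomposition on a suitable model: $-K_X\equiv P+N$, with $P$ the movable part and $N=N_\sigma(-K_X)$ the effective negative part. The hypothesis $f^*(-K_X)\equiv q(-K_X)$, combined with the functoriality $f^*N_\sigma(-K_X)=N_\sigma(f^*(-K_X))$ under pullback by finite surjective morphisms and the homogeneity $N_\sigma(cD)=cN_\sigma(D)$, yields $f^*N\equiv qN$, hence $f^*P\equiv qP$. Since $P$ shares its Iitaka fibration with $-K_X$, on a sufficiently high birational model $\sigma_W:W\to X$ with $\phi_W:W\to Y$ a morphism we have $\sigma_W^*(mP)\sim\phi_W^*A$ for some ample Cartier $A$ on $Y$ and $m\gg1$. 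Pushing down via $\sigma_W$ gives $mP\equiv\pi^*A$ in $\N^1(X)$, where $\pi^*A:=\sigma_{W*}\phi_W^*A$. Combining with $f^*\circ\pi^*=\pi^*\circ g^*$ (from $g\circ\pi=\pi\circ f$):
\[
\pi^*(g^*A)=f^*(\pi^*A)\equiv f^*(mP)\equiv q\cdot mP\equiv q\pi^*A.
\]
A direct intersection-theoretic argument (pairing $\sigma_{W*}\phi_W^*B$ against general curves on $X$ that dominate $Y$ under $\pi$) shows $\pi^*:\N^1(Y)\hookrightarrow\N^1(X)$ is injective, so $g^*A\equiv qA$, i.e., $g$ is numerically $q$-polarized.

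To upgrade to linear equivalence, note that $g$ being surjective forces its Albanese morphism to be surjective (the image is a $g$-invariant abelian subvariety that generates $\mathrm{Alb}(Y)$); together with $\delta_g=q$, Proposition \ref{prop-eig-sqrt} then bounds the moduli of eigenvalues of $g^*|_{\Pic^0_{\C}(Y)}$ by $\sqrt q<q$, making $g^*-q\cdot\mathrm{id}$ invertible on $\Pic^0_{\Q}(Y)$. Choosing $B\in\Pic^0(Y)$ and $N\in\Z_{>0}$ with $g^*B-qB=-N(g^*A-qA)$, the ample divisor $H:=NA+B$ satisfies $g^*H\sim qH$, so $g=f|_Y$ is $q$-polarized. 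The hardest step is the functoriality of $N_\sigma$ under $f^*$ in the singular $\Q$-Gorenstein setting, which may require reducing to a smooth resolution carrying a birational lift of $f$; as a fallback, Lemma \ref{lem-face-4} together with an analysis of the $f$-action on the minimal extremal face of $\PE^1(W)$ containing $\sigma_W^*(-K_X)$ should yield the same numerical conclusion.
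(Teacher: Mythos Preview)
Your construction of $\pi$ via the Chow reduction of the anti-Iitaka fibration, and the appeal to Theorem~\ref{thm-chow-equiv} to get an honest endomorphism $g$ on $Y$, match the paper exactly. The divergence, and the gap, is in how you extract an ample eigenvector on $Y$.

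The $\sigma$-decomposition route does not go through as written. The central unjustified step is the claim that on a high model $W$ one has $\sigma_W^*(mP)\sim\phi_W^*A$ for some ample $A$ on $Y$. The positive part $P=P_\sigma(-K_X)$ is only movable, not semiample, and there is no reason its pullback should be numerically the pullback of an ample class from the Iitaka base; that would force $\nu(P)=\kappa(X,-K_X)$ and $P$ to be nef along the generic fibre, neither of which follows from the hypotheses. You are conflating the numerical decomposition $P+N$ with the linear-system decomposition $\sigma_{W'}^*|mD|=\mathfrak{d}+\Delta$ used to define $\phi_{mD}$; these are different objects, and only the latter is automatically tied to $\phi_W^*(\text{ample})$. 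The functoriality $f^*N_\sigma(D)=N_\sigma(f^*D)$ that you flag as hardest is indeed delicate on a singular $\Q$-Gorenstein $X$, but even granting it the argument collapses at the next step.

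Separately, your claim that surjectivity of $g$ forces $\mathrm{alb}_Y$ to be surjective is false (take $Y$ a curve of genus $\ge 2$: the image in $\mathrm{Alb}(Y)$ generates but is one-dimensional). Fortunately this whole $\Pic^0$ detour is unnecessary: once you know $g^*A\equiv qA$ for a single ample $A$, the results of \cite{MZ} (their Propositions~1.1 and~2.9) already give $q$-polarization in the linear sense.

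The paper's argument is precisely your ``fallback''. Working on the graph $W$ with the lift $h$ of $f$, Lemma~\ref{lem-face-4} gives $\phi_W^*(\PE^1(Y))\subseteq F$, where $F$ is the minimal extremal face of $\PE^1(W)$ containing $\sigma_W^*(-K_X)$. Since $h^*\sigma_W^*(-K_X)\equiv q\,\sigma_W^*(-K_X)$, the face $F$ is $h^*$-invariant, and \cite[Proposition~2.9]{MZ} forces every eigenvalue of $h^*|_{\langle F\rangle}$ to have modulus $q$; as $\phi_W^*\N^1(Y)\subseteq\langle F\rangle$, the same holds for $g^*|_{\N^1(Y)}$, and \cite[Propositions~2.9, 1.1]{MZ} finish. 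This sidesteps all the issues above: it never needs $P$ to descend to $Y$, and it never touches $\Pic^0(Y)$ or the Albanese map.
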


\begin{proof}
Let $\phi_{mD}:X\dashrightarrow Z$ be the Iitaka fibration of $D:=-K_X$ with $m\gg 1$.
By Corollary \ref{cor-iitaka-des} and Theorem \ref{thm-chow-equiv}, there is a birational map $\sigma_Y: Y\dashrightarrow Z$ such that $\pi:=\sigma_Y^{-1}\circ\phi_{mD}$ is (the Chow reduction of $\phi_{mD}$ and) $f$-equivariant.
Denote by $g:=f|_Y$.

Let $W$ be the normalization of the graph of $\pi$.
We have the following commutative diagram
$$\xymatrix{
W\ar[r]^{\sigma_W}\ar[d]^{\phi_W}&X\ar@{.>}[d]^{\phi_{mD}}\\
Y\ar@{.>}[r]^{\sigma_Y}&Z
}$$
Let $F$ be the minimal extremal face containing $\sigma_W^*D$ in $\PE^1(W)$.
By Lemma \ref{lem-face-4}, we have $\phi_W^*(\PE^1(Y))\subseteq F$.

Note that $f$ lifts to a surjective endomorphism $h:W\to W$.
Since $h^*\sigma_W^*D\equiv q\sigma_W^*D$, we have $h^*(F)=F$ by the uniqueness of $F$ (cf.~\cite[Lemma 4.2]{Meng_PCD}).
Denote by $\langle F \rangle$ the subspace in $\N^1(W)$ spanned by $F$.
By \cite[Propositions 2.9]{MZ}, $h^*|_{\langle F \rangle}$ is diagonalizable with all the eigenvalues being of the same modulus $q$.
Therefore so is $g^*|_{\N^1(Y)}$ since $\N^1(Y)=\langle \PE^1(Y) \rangle \subseteq \langle F \rangle$.
By \cite[Propositions 2.9 and 1.1]{MZ}, $g$ is $q$-polarized.
\end{proof}

Now we can show Proposition \ref{main-thm-caseB} easily.
\begin{proof}[Proof of Proposition \ref{main-thm-caseB}]
We may assume $\delta_f>1$.
Then the theorem follows directly from Theorem \ref{thm-caseB}, Lemma \ref{lem-ksc-iff} and Proposition \ref{prop-ksc-kappa(D)>=0}.
\end{proof}

\section{Case TIR: Conditions (A1) - (A4) Imply Condition (A5)}

In this section, we show that in Case TIR, Conditions (A1)-(A4) imply Condition (A5).
The main idea is to take the double cover as in Step 5 of the proof of Theorem \ref{thm-surf-rho2}.

We first recall the result below.

\begin{lemma}\label{lem-nak-gkp}(cf.~\cite[Lemma 3.3.1]{ENS} or \cite[Lemma 2.3]{Nak-1923}, \cite[Lemma 2.5]{Na-Zh})
	Let $f:X\to X$ be a non-isomorphic surjective endomorphism of a normal projective variety $X$.
	Let $\theta_k:V_k\to X$ be the Galois closure of $f^k:X\to X$ for $k\ge 1$ and let $\tau_k:V_k\to X$ be the induced finite Galois covering such that $\theta_k=f^k\circ \tau_k$.
	Then there are finite Galois morphisms $g_k, h_k: V_{k+1}\to V_k$ such that $\tau_k\circ g_k=\tau_{k+1}$, $\tau_k\circ h_k=f\circ \tau_{k+1}$
and $(\deg h_k)/ (\deg g_k) = \deg f$.
\end{lemma}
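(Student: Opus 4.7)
The plan is to work at the level of function fields. Let $K = k(X)$ and set $\sigma = f^*: K \hookrightarrow K$, so that $(f^j)^* = \sigma^j$ for every $j \ge 1$, and write $K_{(j)} := \sigma^j(K) \subseteq K$. Since $f$ is non-isomorphic, the tower $\cdots \subseteq K_{(k+1)} \subseteq K_{(k)} \subseteq \cdots \subseteq K_{(0)} = K$ consists of strict inclusions with $[K_{(j)} : K_{(j+1)}] = \deg f$. Fix an algebraic closure $\Omega$ of $K$; then $L_k \subseteq \Omega$ is the Galois closure of $K/K_{(k)}$ and $V_k$ is the normalization of $X$ in $L_k$, with $\tau_k$ corresponding to the inclusion $K \hookrightarrow L_k$ and $\theta_k$ to $K_{(k)} \hookrightarrow L_k$.

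First I would construct $g_k$. Because $K_{(k+1)} \subseteq K_{(k)}$, the minimal polynomial over $K_{(k+1)}$ of any element of $K$ is divisible by its minimal polynomial over $K_{(k)}$, so every $K_{(k)}$-conjugate of $K$ in $\Omega$ is also a $K_{(k+1)}$-conjugate; hence $L_k \subseteq L_{k+1}$. The inclusion induces a finite morphism $g_k : V_{k+1} \to V_k$ with $\tau_k \circ g_k = \tau_{k+1}$. Moreover $L_{k+1}$ is Galois over $K_{(k+1)} \subseteq L_k$, so $L_{k+1}/L_k$ is Galois, and therefore $g_k$ is Galois.

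Next I would construct $h_k$. The embedding $\sigma : K \hookrightarrow K \hookrightarrow L_{k+1}$ restricts to an isomorphism $K_{(k)} \xrightarrow{\sim} K_{(k+1)}$. Since $L_k / K_{(k)}$ is algebraic and $L_{k+1}$ is the Galois closure of $K / K_{(k+1)}$ in $\Omega$, this embedding extends to a field embedding $\varphi : L_k \to L_{k+1}$ --- the key point is that $L_{k+1}$ is big enough to absorb every $K_{(k+1)}$-conjugate of $\sigma(K) = K_{(1)} \subseteq K$, and these are precisely the images under $\varphi$ of the $K_{(k)}$-conjugates of $K$ that generate $L_k$. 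By construction $\varphi|_K = \sigma$, so $\varphi$ induces a finite morphism $h_k : V_{k+1} \to V_k$ with $\tau_k \circ h_k = f \circ \tau_{k+1}$. Since $\varphi(L_k) \supseteq \varphi(K_{(k)}) = K_{(k+1)}$, the same tower argument shows $L_{k+1} / \varphi(L_k)$ is Galois, hence $h_k$ is Galois.

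Finally, the degree relation will be immediate from the commutative triangles: from $\tau_k \circ g_k = \tau_{k+1}$ one gets $\deg g_k = \deg \tau_{k+1} / \deg \tau_k$, and from $\tau_k \circ h_k = f \circ \tau_{k+1}$ one gets $\deg h_k = \deg f \cdot \deg \tau_{k+1} / \deg \tau_k = \deg f \cdot \deg g_k$. The main technical point requiring care is ensuring that the extension $\varphi$ used for $h_k$ actually lands inside $L_{k+1}$ rather than in some conjugate Galois closure inside $\Omega$; this is precisely where the characterization of $L_{k+1}$ as the Galois closure of $K / K_{(k+1)}$ is indispensable.
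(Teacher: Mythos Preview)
The paper does not supply its own proof of this lemma; it merely recalls the statement with the citations \cite[Lemma 3.3.1]{ENS} and \cite[Lemma 2.5]{Na-Zh}. Your function-field argument is correct and is precisely the standard approach found in those references: one realizes the Galois closures as normalizations in the fields $L_k$, obtains $g_k$ from the inclusion $L_k\subseteq L_{k+1}$ coming from $K_{(k+1)}\subseteq K_{(k)}$, and obtains $h_k$ by extending the isomorphism $\sigma:K\xrightarrow{\sim}K_{(1)}$ to an embedding $\varphi:L_k\hookrightarrow L_{k+1}$.

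One small clarification: the worry you flag at the end, that $\varphi$ might land in a ``conjugate Galois closure'' rather than in $L_{k+1}$, is not actually an issue once $\Omega$ is fixed. The Galois closure of $K_{(1)}/K_{(k+1)}$ inside $\Omega$ is unique (it is the compositum of all $K_{(k+1)}$-conjugates of $K_{(1)}$ in $\Omega$), and since $K_{(1)}\subseteq K\subseteq L_{k+1}$ with $L_{k+1}/K_{(k+1)}$ normal, that unique closure sits inside $L_{k+1}$. So any extension $\varphi$ of $\sigma$ to $L_k$ automatically has image in $L_{k+1}$; no special choice of $\varphi$ is needed. With that remark, your argument is complete.
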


The following result about
periodic subvarieties is another application of the technique used in the proofs of \cite[Theorem 3.3]{Na-Zh} and \cite[Theorem 5.2]{Meng_IAMP}.

\begin{theorem}\label{thm-log-qe} Let $f:X\to X$ be an int-amplified endomorphism of a normal projective variety $X$.
	Suppose $D:=\Supp R_f$ is $f^{-1}$-invariant and $X\backslash D$ is klt.
	Let $Z$ be an $f^{-1}$-periodic proper closed subvariety of $X$.
	Then $Z\subseteq \Supp R_f$.
\end{theorem}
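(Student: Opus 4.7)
I would argue by contradiction, following the Galois closure strategy of \cite[Theorem 3.3]{Na-Zh} and \cite[Theorem 5.2]{Meng_IAMP}. Replacing $f$ by a positive power, I may assume $Z$ is $f^{-1}$-invariant; passing to an irreducible component of $Z$ and replacing $f$ by a further power, I reduce to the case that $Z = W$ is an irreducible $f^{-1}$-invariant subvariety of $X$. Since $W$ is irreducible and $f$ is surjective, $W$ is in fact also $f$-invariant. Suppose for contradiction that $W \not\subseteq D$; then $W_0 := W \cap U$ is a nonempty Zariski-open subset of $W$, where $U := X \setminus D$ is the klt open locus.

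The key ramification fact is that $\Supp R_{f^k} = D$ for every $k \ge 1$: this follows from the chain rule $R_{f^k} = R_f + f^*R_{f^{k-1}}$ combined with $f^{-1}(D) = D$. Consequently the restrictions $f^k|_U : U \to U$ are quasi-\'etale. Applying Lemma \ref{lem-nak-gkp}, the Galois closures $\tau_k : V_k \to X$ of $f^k$ are then also quasi-\'etale over $U$, and the commutative squares $\tau_k \circ h_k = f \circ \tau_{k+1}$ together with the degree identity $(\deg h_k)/(\deg g_k) = \deg f$ allow me to select a compatible tower $\widetilde{W}_k \subseteq \tau_k^{-1}(W)$ of irreducible components satisfying $h_k(\widetilde{W}_{k+1}) = \widetilde{W}_k$. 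The Galois group $\Gal(V_k/X)$ acts transitively on the irreducible components of $\tau_k^{-1}(W)$, so each such component maps to $W$ by a map whose degree divides $\deg \tau_k$ in a controlled way.

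The contradiction is to be extracted by balancing two competing growth rates on this tower. On the one hand, the int-amplified hypothesis, via Proposition \ref{prop-iamp->1}, forces every eigenvalue of $f^*$ on $\Pic_{\Q}(X)$ to have modulus strictly greater than one, driving exponential growth of $\deg f^k|_W$ and of $(f^k)^*$ acting on nef classes. On the other hand, the quasi-\'etaleness of $\tau_k$ over $W_0$, combined with the klt-ness of $U$, rigidly restricts how ramification and canonical divisors of the covers $\tau_k$ can concentrate over $W$: one has $K_{V_k} = \tau_k^*K_X + R_{\tau_k}$ with $\Supp R_{\tau_k}$ supported over $D$, so no extra ``vertical'' ramification can arise along $W_0$ to absorb this exponential growth. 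I expect the hardest step to be assembling these divisor-theoretic ingredients into a sharp numerical contradiction. This is the delicate balancing act already carried out in \cite{Na-Zh} and \cite{Meng_IAMP}; I would mirror their argument, using the full eigenvalue statement of Proposition \ref{prop-iamp->1} (applied to all of $\Pic_{\Q}(X)$, not just $\NS_{\Q}(X)$) as the replacement for the polarized input, so that the pullback behavior of the canonical-divisor class through the tower $\{\tau_k\}$ produces the required numerical impossibility.
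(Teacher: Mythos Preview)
Your setup is correct: reduce to $Z$ irreducible and closed, build the Galois closure tower from Lemma~\ref{lem-nak-gkp}, and observe that all the relevant maps are quasi-\'etale over $U = X \setminus D$. But from that point on your sketch diverges from the actual mechanism, and there is a genuine gap.

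First, you omit the crucial upgrade from quasi-\'etale to \'etale. Since $Z$ may have arbitrary codimension, quasi-\'etaleness of $\tau_k, g_k, h_k$ over $U$ does not by itself control how cycles over $Z$ pull back. The paper uses the klt hypothesis on $U$ (hence on each $U_k := V_k \setminus \tau_k^{-1}(D)$, by \cite[Proposition~5.20]{KM}) together with \cite[Theorem~1.1]{GKP} to conclude that $g_k|_{U_{k+1}}$ and $h_k|_{U_{k+1}}$ are genuinely \'etale for $k \gg 1$. Only then does one get the cycle equalities $S_{k+1} = g_k^* S_k = h_k^* S_k$ for $S_k := \tau_k^{-1}(Z)$ as cycles, not merely as sets. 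Your proposal never mentions this step, and without it there is no way to make the numerics work when $\dim Z < \dim X - 1$.

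Second, the contradiction is not obtained via canonical divisors or Proposition~\ref{prop-iamp->1}; that machinery is tuned for divisorial invariants and does not see higher-codimension $Z$. The argument is purely intersection-theoretic with an ample divisor $A$: computing $S_{k+1} \cdot (\tau_{k+1}^* f^*A)^m$ in two ways (via $g_k$ and via $h_k$, $m = \dim Z$) and using $(\deg h_k)/(\deg g_k) = \deg f$ yields, after pushing down by $\tau_k$, the identity $Z \cdot (f^*A)^m = (\deg f)\, Z \cdot A^m$. Iterating and applying \cite[Lemma~3.8]{Meng_IAMP} (the int-amplified input) gives $Z \cdot A^m = \lim_i Z \cdot ((f^i)^*A)^m/(\deg f)^i = 0$, contradicting $Z \cdot A^m \ge 1$. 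Tracking a single component $\widetilde{W}_k$ and the Galois action, as you propose, is unnecessary; working with the full cycle $S_k$ is both simpler and what makes the projection-formula bookkeeping clean.
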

\begin{proof}
   	It suffices for us to consider the case when $Z$ is irreducible.
	We apply Lemma \ref{lem-nak-gkp} and use the notation there.
Set $d:=\deg f$. Then $d = (\deg h_k)/(\deg g_k)$, and $d > 1$ (cf.~\cite[Lemma 3.7]{Meng_IAMP}).
Denote by $U_k:=V_k\backslash \tau_k^{-1}(D)$.
	Then $U_{k+1}=g_k^{-1}(U_k)=h_k^{-1}(U_k)$.
	By the ramification divisor formula, $f|_{X\backslash D}: X\backslash D\to X\backslash D$ is quasi-\'etale.
	Hence $\theta_k|_{U_k}$, $\tau_k|_{U_k}$, $g_k|_{U_{k+1}}$ and $h_k|_{U_{k+1}}$ are quasi-\'etale and Galois by the construction.
So $U_k$ is klt by \cite[Proposition 5.20]{KM}.
	Therefore, $g_k|_{U_{k+1}}$ and $h_k|_{U_{k+1}}$ are \'etale for $k\gg 1$ by \cite[Theorem 1.1]{GKP}.	
	Let $A$ be an ample Cartier divisor on $X$.
	Denote by $A_{k}:=\tau_{k}^*A$ and $(f^*A)_{k}:=\tau_{k}^*(f^*A)$.
	Denote by $S_k:=\tau_k^{-1}(Z)$.
	In the rest of the proof, we always assume $k\gg 1$.
	
	Suppose $Z\not\subseteq D$.
	Then $S_{k+1}=g_k^{-1}(S_k)=h_k^{-1}(S_k)$. Note that $g_k$ and $h_k$ are \'etale over the generic point of $S_k$. By viewing $S_k$ as a cycle with simply the reduced structure, we have $S_{k+1}\equiv_w g_k^*S_k\equiv_w h_k^*S_k$; see \cite[\S 2.3]{Zh-comp} for the pullback of cycles by finite surjective morphisms.
	Let $m=\dim(Z)<\dim X$.
	By the projection formula, we have
	$$S_{k+1}\cdot(f^*A)_{k+1}^m=S_{k+1}\cdot g_k^*((f^*A)_k)^m=(\deg g_k)  S_k\cdot (f^*A)_k^m$$
	and
	$$S_{k+1}\cdot(f^*A)_{k+1}^m=S_{k+1}\cdot h_k^*(A_k)^m=(\deg h_k) S_k\cdot A_k^m.$$
	Then $S_k\cdot (f^*A)_k^m=d S_k\cdot A_k^m$.
	Note also that $(\tau_k)_{\ast}S_k=t_kZ$ for some integer $t_k>0$.
	Thus, by the projection formula, we have $t_kZ\cdot (f^*A^m)=d t_kZ\cdot A^m$.
	Therefore,
	$$1\le Z\cdot A^m=\lim\limits_{i\to+\infty}Z\cdot \frac{(f^i)^*A^m}{d^i}=0$$ with the last equality by \cite[Lemma 3.8]{Meng_IAMP}, a contradiction.
\end{proof}

\begin{lemma}\label{lem-cycl_pullback}
Let $\pi: X \to Y$ be a surjective morphism of normal projective varieties with $Y$ being $\Q$-factorial.
Let $D_1,\cdots, D_r\in \NS_{\C}(Y)$ such that $D_1\cdots D_r\equiv_w 0$ (weak numerical equivalence).
Then $\pi^*D_1\cdots \pi^*D_r \equiv_w 0$.
\end{lemma}

\begin{proof}
Let $n = \dim(X) \ge m: = \dim(Y)$ and $d:=n-m$.
Suppose the contrary that
$\pi^*D_1\cdots \pi^*D_r \not\equiv_w 0$.
Then we can find (general) very ample divisors $H_i$ of $X$ such that $H_1\cdots H_{n-r}\cdot \pi^*D_1\cdots \pi^*D_r\neq 0$.
Since $X$ is normal, we may assume $H_1$ is a normal
variety (cf.~\cite{Sei}).
Inductively, by the Bertini's theorem, we may assume that each
$Z_s:=H_1\cap \cdots \cap H_s$ ($1 \le s \le n-r$) is an irreducible normal subvariety (and a Cartier divisor) of $Z_{s-1}$ with $\dim(Z_s)= n-s\ge r\ge 1$, and $\pi|_{Z_s}:Z_s\to \pi(Z_s)$ ($s \ge d$) is generically finite.
Then $\pi(Z_{d})=Y$ and $\pi(Z_s)= \pi(Z_{s-1} \cap H_s) = \bigcap_{d< i \le s}\pi(H_i \cap Z_{d})$
for $s > d$.
Note that $H'_s:=(\pi|_{Z_{d}})_*(H_s|_{Z_{d}})$ ($s > d$) is a $\Q$-Cartier divisor on $Y$
since $Y$ is $\Q$-factorial. In particular, $\pi_*(H_1\cdots H_{n-r})=e H'_{d+1}\cdots H'_{n-r}$ for some $e>0$.
Note that $n-r\ge d$.
By the projection formula:
$$0\neq H_1\cdots H_{n-r}\cdot \pi^*D_1\cdots \pi^*D_r =\pi_*(H_1\cdots H_{n-r})\cdot D_1\cdots D_r =e H'_{d+1} \cdots H'_{n-r} \cdot D_1\cdots D_r,$$
contradicting that $D_1\cdots D_r \equiv_w 0$.
\end{proof}

\begin{proposition}\label{prop-D^d+1=0}
	Let $f:X\to X$ be a surjective endomorphism of a $\Q$-factorial lc projective variety $X$.
	Let $\pi:X\to Y$ be an $f$-equivariant Fano contraction with general fibre $F$.
	Suppose $\delta_f>\delta_{f|_Y}$ and $f^*D\equiv \delta_f D$ for some $\pi$-ample $D\in \N^1(X)$.
	Then $D^d\not\equiv_w 0$ and $D^{d+1}\equiv_w 0$ (weak numerical equivalence) with $d:=\dim(X)-\dim(Y)$.
\end{proposition}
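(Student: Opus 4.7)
The non-vanishing $D^d\not\equiv_w 0$ is immediate: since $\pi$ is a Fano contraction with general fibre $F$ of dimension $d$ and $D$ is $\pi$-ample, $(D|_F)^d>0$, hence $D^d\cdot [F]>0$.

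For $D^{d+1}\equiv_w 0$ my plan is to first reduce to a family of assertions about pushforwards on $Y$. Since $\pi$ is an extremal Fano contraction from a $\Q$-factorial variety, $Y$ is $\Q$-factorial, $\rho(X/Y)=1$, and
$$\N^1(X)=\R[D]\oplus\pi^*\N^1(Y),$$
the sum being direct because $D\cdot F>0$ whereas $\pi^*M\cdot F=0$. For Cartier divisors $L_1,\dots,L_{m-1}$ on $X$ (with $m=\dim Y$), writing $L_i\equiv\alpha_i D+\pi^*M_i$ and expanding with the projection formula,
$$D^{d+1}\cdot L_1\cdots L_{m-1}=\sum_{k=0}^{m-1}\sum_{|S|=k}\alpha_S\,\pi_*(D^{d+1+k})\cdot\prod_{j\notin S}M_j,$$
so it is enough to show $\pi_*(D^{d+1+k})\equiv_w 0$ in $\N_{k+1}(Y)$ for every $k=0,\dots,m-1$.

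The key eigenvalue relation is produced next. Writing $\delta_1:=\delta_f$ and $\delta_2:=\delta_{f|_Y}$, the identity $f^*D\equiv\delta_1 D$ gives $f^*D^{d+1+k}\equiv_w\delta_1^{d+1+k}D^{d+1+k}$; applying $f_*$ together with $f_*f^*=\deg f\cdot\id$ yields
$$f_*D^{d+1+k}=\frac{\deg f}{\delta_1^{d+1+k}}\,D^{d+1+k}.$$
A degree count on a general fibre, where $(f|_F)^*(D|_{f(F)})\equiv\delta_1\,D|_F$ forces $\deg(f|_F)=\delta_1^d$, gives $\deg f=\delta_1^d\,\deg g$ for $g:=f|_Y$. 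Combining with $g_*\pi_*=\pi_*f_*$,
$$g_*\pi_*(D^{d+1+k})=\frac{\deg g}{\delta_1^{k+1}}\,\pi_*(D^{d+1+k}),$$
so $\pi_*(D^{d+1+k})$ is a $(\deg g/\delta_1^{k+1})$-eigenvector of $g_*$ on $\N_{k+1}(Y)$.

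Finally I would rule out this eigenvalue. By the projection formula $g_*g^*=\deg g\cdot\id$ on $\N_{k+1}(Y)$, the $g_*$-eigenvalues there are of the form $\deg g/\mu$ for $\mu$ a $g^*$-eigenvalue, so a nonzero such eigenvector would require $\delta_1^{k+1}$ to be a $g^*$-eigenvalue on $\N_{k+1}(Y)$. However the spectral radius of $g^*$ on $\N_{k+1}(Y)$ is the $(k+1)$st dynamical degree $\delta_{k+1}(g)$, and by submultiplicativity of higher dynamical degrees $\delta_{k+1}(g)\le \delta_g^{k+1}=\delta_2^{k+1}<\delta_1^{k+1}$, using the hypothesis $\delta_1>\delta_2$. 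Hence $\pi_*(D^{d+1+k})=0$ for every $k$, so $D^{d+1}\equiv_w 0$. I expect the hardest step to be the eigenvalue bookkeeping --- a careful use of the projection formula and submultiplicativity of higher dynamical degrees on $\Q$-factorial normal varieties --- together with the degree identity $\deg f=\delta_1^d\deg g$ arising from the fibrewise behaviour of $f$.
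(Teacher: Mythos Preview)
Your argument is correct and takes a genuinely different route from the paper. Both begin with $D^d\cdot F>0$, but diverge for $D^{d+1}\equiv_w 0$.

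The paper argues entirely on $X$: fixing a Jordan basis $\{y_{i_j}\}$ of $\NS_{\C}(Y)$ for $g^*$ with eigenvalues $\lambda_i$, it lets $s$ be maximal with $D^s\not\equiv_w 0$, picks a maximal multi-index with $D^s\cdot\prod y_{i_j}^{a_{i_j}}\neq 0$, and reads off $\deg f=\delta_f^s\prod\lambda_i^{\sum a_{i_j}}$. A parallel computation with $D^d$ against a fibre class gives $\deg f=\delta_f^d\deg g$, and combining the two forces $\delta_f^{s-d}$ to be a product of $s-d$ eigenvalues of $g^*|_{\N^1(Y)}$, each of modulus at most $\delta_g<\delta_f$; hence $s=d$. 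Everything stays within $\N^1$ and elementary intersection theory.

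Your route pushes $D^{d+1+k}$ to $Y$ and invokes higher dynamical degrees. The payoff is conceptual transparency: vanishing reduces to the single eigenvalue inequality $\delta_f^{k+1}>\delta_{k+1}(g)$, visibly forced by $\delta_f>\delta_g$ and submultiplicativity. Two technical points deserve care. First, a notational slip: in the paper's conventions $\N_r$ means dimension-$r$ cycles, so $\pi_*(D^{d+1+k})$ lives in $\N_{m-1-k}(Y)$ (codimension $k+1$), not in $\N_{k+1}(Y)$. Second, and more substantively, the identification of the spectral radius of $g^*$ on codimension-$(k+1)$ numerical classes with $\delta_{k+1}(g)$---as well as the very definition of $g^*$ on such classes---is standard for smooth varieties but needs justification on the singular, merely $\Q$-factorial base $Y$ (you implicitly set $g^*:=\deg g\cdot g_*^{-1}$; matching this to the dynamical degree is where outside input such as Truong or Dang is needed). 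The paper's hands-on approach sidesteps this entirely by never leaving $\N^1(Y)$, at the price of the Jordan-block bookkeeping.

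Your degree identity $\deg f=\delta_f^{\,d}\deg g$ via the fibrewise count is correct and coincides with the relation the paper derives intersection-theoretically.
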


\begin{proof}
Note that $D|_F$ is ample, hence $D^d\cdot F=(D|_F)^d>0$.
We identify $\NS_{\mathbb{C}}(Y)$ as a subspace of  $\NS_{\mathbb{C}}(X)$ via $\pi^*$.
Set $x :=D$.
Then $\NS_{\mathbb{C}}(X)$ is spanned by $\NS_{\mathbb{C}}(Y)$ and $x$.
	
Let $\{y_{i_j}\}_{1\le i\le k, 1\le j\le \ell_i}$ be a basis of $\NS_{\mathbb{C}}(Y)$ such that $g^*(y_{i_j})=\lambda_i y_{i_j}+ y_{i_{j+1}}$ if $j<\ell_i$, and $g^*(y_{i_j})=\lambda_i y_{i_j}$ if $j=\ell_i$.
	For two sequences of integers, we say $(a_{i_j})< (b_{i_j})$ if for some $i'$ and $j'$, $a_{i'_{j'}}<b_{i'_{j'}}$ and $a_{i_j} \le b_{i_j}$ when $i>i'$ and when $i=i'$ and $j>j'$.
	Let $s\ge d$ be the maximal integer such that $x^s\not\equiv_w 0$.
	Let $(a_{i_j})$ be the maximal sequence such that $\sum_{i_j} a_{i_j}=\dim(X)-s$
	and $x^s\cdot\prod_{i_j} y_{i_j}^{a_{i_j}}\neq 0$.
	For convenience, we call $(a_{i_j})$ the degree sequence of $y_{i_j}^{a_{i_j}}$ and $\sum_j a_{i_j}$ the $i$-th degree of $(a_{i_j})$.
	
	Note that $$f^*(x^s\cdot\prod_{i_j} y_{i_j}^{a_{i_j}})=\delta_f^sx^s\cdot \{\prod_{i_j} (\lambda_i y_{i_j})^{a_{i_j}}+\Delta\}$$
	where the degree sequence of each term of $\Delta$  is larger than $(a_{i_j})$.
	Thus $x^s\cdot \Delta=0$, so
	$$\deg f=\delta_f^s\cdot \prod_i \lambda_i^{\sum_j a_{i_j}}.$$
	
Lemma \ref{lem-cycl_pullback} implies $\prod_{i_j} y_{i_j}^{a_{i_j}}\not\equiv_w 0$ in $Y$, noting that $Y$ is $\Q$-factorial (cf.~\cite[Corollary 3.18]{KM}).
So $\prod_{i_j} y_{i_j}^{a_{i_j}+b_{i_j}}\neq 0$ for some $b_{i_j}\ge 0$ and $\sum_{i_j} (a_{i_j}+b_{i_j})=\dim(Y)$.
	Let $(c_{i_j})$ be the maximal sequence such that $\sum_j c_{i_j}=\sum_j (a_{i_j}+b_{i_j})$
for each $i$ and $\prod_{i_j} y_{i_j}^{c_{i_j}}\neq 0$.
	Note that $$g^*(\prod_{i_j} y_{i_j}^{c_{i_j}})=\prod_{i_j} (\lambda_i y_{i_j})^{c_{i_j}}+\Delta'$$
	where the degree sequence of each term of $\Delta'$  is larger than $(c_{i_j})$ and the $i$-th degree of each term of $\Delta'$ is still $\sum_j (a_{i_j}+b_{i_j})$ for each $i$.
	Then $\Delta'=0$ and hence
	$$\deg g=\prod_i \lambda_i^{\sum_j c_{i_j}}.$$
	
	We may write $\prod_{i_j} y_{i_j}^{c_{i_j}}\equiv tF$ on $X$ for some $0 \ne t \in \C$.
	Since $D|_F$ is ample, we have $x^d\cdot \prod_{i_j} y_{i_j}^{c_{i_j}}\neq 0$.
	Note that $$f^*(x^d\cdot\prod_{i_j} y_{i_j}^{c_{i_j}})=\delta_f^dx^d\cdot \prod_{i_j} (\lambda_i y_{i_j})^{c_{i_j}}.$$
	Then $$\deg f=\delta_f^d\cdot \prod_i \lambda_i^{\sum_j c_{i_j}}.$$
	
	Finally, we have $\delta_f^{s-d}=\prod_i \lambda_i^{\sum_j (c_{i_j}-a_{i_j})}$.
Since $\sum_j (c_{i_j}-a_{i_j})\ge 0$ for each $i$, $\sum_{i_j} (c_{i_j}-a_{i_j}) = \dim (Y) - (\dim (X) - s) = s-d$
and $|\lambda_i|\le \delta_g<\delta_f$,
	we have $s=d$.
\end{proof}

\begin{lemma}\label{lem-fin-qe}
	Let $\pi:X\to Y$ be a degree two finite surjective morphism of normal varieties.
	Let $f:X\to X$ and $g:Y\to Y$ be surjective endomorphisms such that $\pi\circ f=g\circ \pi$.
	Suppose $g$ is quasi-\'etale, and there is no $g^{-1}$-periodic prime divisor of $Y$.
	Then $\pi$ and $f$ are quasi-\'etale.
\end{lemma}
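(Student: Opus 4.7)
The plan is to show that $R_\pi = 0$, which will immediately give $R_f = 0$. Applying the chain rule for ramification divisors to the two factorizations of $\pi\circ f = g\circ\pi$ yields
\[
R_f + f^*R_\pi \;=\; R_\pi + \pi^*R_g.
\]
Since $g$ is quasi-\'etale, $R_g = 0$, so $R_\pi - f^*R_\pi = R_f \ge 0$. Because $\pi$ has degree two and the base field has characteristic zero, $R_\pi$ is reduced and $\pi^*B = 2R_\pi$, where $B$ is the (reduced) branch divisor of $\pi$. Substituting gives $2f^*R_\pi = f^*\pi^*B = \pi^*g^*B$, and combined with $f^*R_\pi \le R_\pi$ this becomes $\pi^*(B - g^*B) \ge 0$; since a negative coefficient in a Weil divisor on $Y$ would pull back under the finite morphism $\pi$ to a negative coefficient on $X$, we conclude $g^*B \le B$ on $Y$, and in particular $g^{-1}(\Supp B) \subseteq \Supp B$ setwise.

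Next I would exploit this contraction property via a pigeonhole on the (finite) set of prime components of $B$. Consider the descending chain
\[
\Supp B \;\supseteq\; g^{-1}(\Supp B) \;\supseteq\; g^{-2}(\Supp B) \;\supseteq\; \cdots,
\]
all contained in the finite union of components of $B$; it stabilizes to some subset $B_\infty$. Because $g$ is surjective, each $g^{-s}(\Supp B)$ is non-empty whenever $B \neq 0$, so $B_\infty \neq \emptyset$ provided $B \neq 0$, and by construction $g^{-1}(B_\infty) = B_\infty$ setwise. Let $T$ be the non-empty finite set of prime components of $B$ whose union is $B_\infty$. The map $\phi\colon T\to T$, $B_i\mapsto g(B_i)$, is well-defined since $g(B_\infty)\subseteq B_\infty$, and is surjective because every $B_i\in T$ has a non-empty preimage $g^{-1}(B_i)\subseteq B_\infty$ whose components lie in $T$; hence $\phi$ is a permutation of $T$.

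Finally, for any $B_i \in T$ pick $s>0$ with $\phi^s(B_i)=B_i$. The set-theoretic preimage $g^{-s}(B_i)$ is pure of codimension one, is contained in $B_\infty$, and its components are exactly the $B_j\in T$ with $\phi^s(B_j)=B_i$; since $\phi^s$ is a bijection of $T$, $B_i$ is the unique such component, giving $g^{-s}(B_i)=B_i$ setwise --- a $g^{-1}$-periodic prime divisor of $Y$, contradicting the hypothesis. Therefore $B = 0$, so $R_\pi = 0$ and $\pi$ is quasi-\'etale, whence $R_f = R_\pi - f^*R_\pi = 0$ and $f$ is quasi-\'etale as well. The main subtle point is upgrading the automatic existence of a $g$-\emph{forward}-periodic component of $B$ to a $g^{-1}$-periodic one; this forces passage to the stable subset $B_\infty$, where the restriction of $g$ induces a permutation of prime components and $g^{-s}$-preimages of a fixed component reduce to that single component.
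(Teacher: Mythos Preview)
Your proof is correct and follows essentially the same approach as the paper: both establish that the branch locus $B$ of $\pi$ satisfies $g^{-1}(\Supp B)\subseteq\Supp B$, then use finiteness of the component set together with the hypothesis to force $B=0$. The only cosmetic difference is that the paper observes directly (via a component count using surjectivity of $g$) that $g^{-1}(\Supp B)=\Supp B$, so your passage to the stable subset $B_\infty$ is unnecessary---in fact $B_\infty=\Supp B$ already at the first step---but your more cautious route is perfectly valid.
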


\begin{proof}
Suppose prime divisor $Q_1$ of $Y$ is in $B_{\pi}$, the branch locus of $\pi$. Then $\pi^{-1}(Q_1) = P_1$
and $\pi^*Q_1 = 2P_1$, where $P_1$ is a prime divisor of $X$. Now $\pi\circ f=g\circ \pi$ implies
$2f^*(P_1) = \pi^*g^*(Q_1)$.
Thus $g^{-1}(Q_1) \subseteq B_{\pi}$
since $g$ is quasi-\'etale. So the set $g^{-1}(B_{\pi})$ is contained in the set $B_{\pi}$. Hence these two sets are the same since $g$ is surjective. We then have $B_{\pi} = 0$, by the assumption. Thus, $\pi$ and hence $g\circ \pi
= \pi\circ f$ and also $f$ are quasi-\'etale.
\end{proof}

\begin{theorem}\label{thm-tir-1}
	In Case TIR, Conditions (A1)-(A4) imply Condition (A5).
\end{theorem}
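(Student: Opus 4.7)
The plan is to argue by contradiction: suppose (A1)--(A4) hold but (A5) fails, so either $\dim(Y)=0$ or $\dim(X)=\dim(Y)+1$. The first case is immediate: a Fano contraction to a point forces $-K_X$ to be ample, so $\kappa(X,-K_X)=\dim(X)\ge 1$, contradicting (A1). Hence the real work is to rule out the relative-dimension $d:=\dim(X)-\dim(Y)=1$ case.

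Assume $d=1$. Then the general fibre $F$ of $\pi$ is a Fano curve, i.e.\ $F\cong\mathbb{P}^1$, and $D\cdot F=-K_X\cdot F=2$, so $\pi|_D:D\to Y$ is generically $2$-to-$1$. I imitate the double-cover construction of Step 5 in the proof of Theorem~\ref{thm-surf-rho2}: let $\nu:\tilde Y\to D$ be the normalization of $D$, and let $\tilde X$ be the normalization of $X\times_Y\tilde Y$, with projections $p_1:\tilde X\to X$ (finite of degree $2$) and $p_2:\tilde X\to\tilde Y$. The diagonal embedding $D\hookrightarrow D\times_Y D$ forces $\tilde D:=p_1^{-1}(D)$ to split into two prime components $\tilde D=\tilde D_1+\tilde D_2$, each a section of $p_2|_{\tilde D}$. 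Since the general fibres of $p_2$ are $\mathbb{P}^1$, the variety $\tilde X$ has only rational, hence $\Q$-factorial, singularities. The endomorphism $f$ lifts to $\tilde f:\tilde X\to\tilde X$, and after replacing by a positive power we arrange $\tilde f^{-1}(\tilde D_i)=\tilde D_i$ and $\tilde f^*\tilde D_i=\delta_f\tilde D_i$ (using $\tilde f^*p_1^*D=\delta_f p_1^*D$).

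I now adapt Step 4 of that proof. Since $p_2:\tilde X\to\tilde Y$ is (after an $\tilde f$-equivariant relative MMP over $\tilde Y$ if necessary) a Fano contraction of relative dimension $1$, and $\delta_{\tilde f}=\delta_f>\delta_g=\delta_{\tilde f|_{\tilde Y}}$ by Lemma~\ref{lem-d1-bir}, the $\delta_f$-eigenspace of $\tilde f^*|_{\N^1(\tilde X)}$ is one-dimensional; hence $\tilde D_1\equiv t\tilde D_2$ for some $t>0$, and $E:=\tilde D_1-t\tilde D_2\in\Pic^0_\Q(\tilde X)$ is a $\tilde f^*$-eigenvector of eigenvalue $\delta_f$. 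If $E\sim_\Q 0$, then $\tilde D_1\sim_\Q t\tilde D_2$ with disjoint supports, yielding $\kappa(\tilde X,\tilde D_1)>0$, hence $\kappa(\tilde X,p_1^*D)>0$, and Lemma~\ref{lem-kappa-pullback} then gives $\kappa(X,D)>0$, contradicting (A1). If $E$ is non-torsion, the $\mathbb{P}^1$-fibration $p_2$ gives $\Pic^0(\tilde X)=p_2^*\Pic^0(\tilde Y)$ and $\Alb(\tilde X)\cong\Alb(\tilde Y)$, so $E$ descends to a non-torsion $\tilde f|_{\tilde Y}^*$-eigenvector in $\Pic^0_\C(\tilde Y)$ of eigenvalue $\delta_f$. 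Propositions~\ref{prop-eig-sqrt-abe} and~\ref{prop-eig-sqrt} together with Lemmas~\ref{lem-eig-dual} and~\ref{lem-d1-bir} applied to the induced endomorphism of $\Alb(\tilde Y)$ and to the Albanese morphism of $\tilde Y$ then force $\delta_f^2\le\delta_g<\delta_f$, contradicting $\delta_f>1$.

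The main obstacle is the higher-dimensional bookkeeping: verifying that $\tilde X$ is $\Q$-factorial and klt, that $p_2$ descends to a genuine $\tilde f$-equivariant Fano contraction (possibly via a relative MMP), and that the $\Pic^0$-argument survives when the Albanese morphism of $\tilde Y$ is not surjective -- the analogue of the ``high-genus base'' case in Step 5 of Theorem~\ref{thm-surf-rho2} -- where one has to restrict to the image abelian subvariety and exploit $\tilde f$-invariance to control the eigenvalues of the induced endomorphism of $\Alb(\tilde Y)$ by $\delta_g$.
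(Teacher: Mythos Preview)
Your strategy --- rule out $\dim(Y)=0$ directly, then in relative dimension one pass to the normalized fibre product over $\tilde Y=\mathrm{norm}(D)$ so that $\tilde D$ splits into two components, and finally use a $\Pic^0$-eigenvalue argument to force $\kappa(X,-K_X)>0$ --- is exactly the paper's. But the paper resolves precisely the three ``bookkeeping'' obstacles you list at the end, and it does so by bringing in the \emph{int-amplified} endomorphism $\mathcal{I}$ that $X$ is assumed to admit in Case TIR. You never invoke $\mathcal{I}$, and without it each obstacle is a genuine gap, not a technicality.

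Concretely: (i) Your claim that $\tilde X$ has rational, hence $\Q$-factorial, singularities because the general fibre of $p_2$ is $\mathbb{P}^1$ is a surface phenomenon (that is what \cite[Lemmas 2.4.9, 2.4.10]{ENS} give) and fails in dimension $\ge 3$. The paper instead shows each $\tilde D_i$ is $\Q$-Cartier by observing that the non-$\Q$-Cartier locus $\Sigma(\tilde D_i)$ is $\tilde{\mathcal{I}}^{-1}$-invariant and then applying Theorem~\ref{thm-log-qe} (an int-amplified result) to force $\Sigma(\tilde D_i)\subseteq\tilde D$, hence $\Sigma(\tilde D_i)=\emptyset$. (ii) That $p_2$ is already a Fano contraction (no birational steps needed) is again proved by contradiction via Theorem~\ref{thm-log-qe}: any exceptional locus of a relative contraction over $\tilde Y$ would be $\tilde{\mathcal{I}}^{-1}$-periodic, hence contained in $\tilde D$, which is impossible since $\tilde D$ contains no fibre of $p_2$. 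Your ``$\tilde f$-equivariant relative MMP if necessary'' is not justified for an arbitrary surjective $f$; equivariance of MMP needs int-amplified input. (iii) The Albanese of $\tilde Y$ is surjective because $\tilde{\mathcal{I}}|_{\tilde Y}$ is int-amplified (\cite[Theorem 1.8]{Meng_IAMP}), which is exactly what makes Proposition~\ref{prop-eig-sqrt} apply cleanly and kills the non-torsion case in one line. Your proposed workaround (restrict to the image abelian subvariety and bound eigenvalues by $\delta_g$) does not obviously go through, since $\tilde Y$ is only finite over $Y$ and you have no a priori control on $\Alb(\tilde Y)$ versus $\Alb(Y)$.

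Two smaller points: you should first check that $\pi|_D$ is \emph{finite} (not just generically $2$-to-$1$), which the paper does via Proposition~\ref{prop-D^d+1=0} ($D^{d+1}\equiv_w 0$, so $D|_D\equiv 0$); and the quasi-\'etaleness of $p_1$ away from $\tilde D$ needs Lemma~\ref{lem-fin-qe}, whose hypothesis (no $g^{-1}$-periodic prime divisor on $Y\setminus\pi(D)$) again comes from $\mathcal{I}$ via Theorem~\ref{thm-kappa}.
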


\begin{proof}
We assume (A1) - (A4).
We will deduce (A5).
If $\dim(Y)=0$, then $X$ is a klt Fano variety,
so $\kappa(X, -K_X)>0$, contradicting Condition (A1). Thus $\dim Y \ge 1$.
	
We still have to consider the case $\dim(X)=\dim(Y)+1$.
Let $\mathcal{I}:X\to X$ be an int-amplified endomorphism. We may assume $\pi$ is $\mathcal{I}$-equivariant, after $\mathcal{I}$ is replaced by a positive power (cf.~\cite[Theorem 1.10]{Meng_IAMP}).
By Theorem \ref{thm-kappa}, $\mathcal{I}^{-1}(D)=D$ and $\Supp R_{\mathcal{I}}=D$.
	
	We first claim that $\pi|_D:D\to Y$ is finite.
Since $D \sim_{\Q} -K_X$ is $\pi$-ample and $\dim (D) = \dim (Y)$, $\pi|_D$ is generically finite.
	If $\pi|_D$ is not finite, then $D$ contains some curve $C$ contracted by $\pi$.
	Since $D$ is $\pi$-ample, $D\cdot C>0$.
	However, $D^2\equiv_w 0$ by Proposition \ref{prop-D^d+1=0}.
	So $D|_D\equiv 0$ (cf.~\cite[Lemma 3.2]{Zh-tams}) and hence $D\cdot C=D|_D\cdot C=0$, a contradiction.
The claim is proved.
	
	Let $\widetilde{X}$ be the normalization of the (irreducible) main component of $X\times_Y \widetilde{Y}$ where $\widetilde{Y}$ is the normalization of $D$.
	Denote by $p_1:\widetilde{X}\to X$ and $p_2:\widetilde{X}\to \widetilde{Y}$ the induced projections.
	Denote by $\widetilde{f}:\widetilde{X}\to \widetilde{X}$ and $\widetilde{\mathcal{I}}:\widetilde{X} \to \widetilde{X}$ the equivariant liftings of $f$ and $\mathcal{I}$.
Set $\widetilde{D}:=p_1^{-1}(D)$.
	Since the general fibre $F$ of $\pi$ is $\PP^1$, we have $K_X\cdot F=-2$ and $D\cdot F=2$.
Since there is a diagonal embedding $D\to D\times_Y D$, our $\widetilde{D}$ is reducible. Write $\widetilde{D}:= \sum_{i=1}^2 \, \widetilde{D}_i$ with
$\widetilde{D}_i$ irreducible.
Replacing $\widetilde{f}$ and $\widetilde{\mathcal{I}}$ by positive powers, we may assume $\widetilde{D}_i$ is $\widetilde{f}^{-1}$ and $\widetilde{\mathcal{I}}^{-1}$-invariant for each $i$.
	
	Note that $p_1$ is a double cover and $\mathcal{I}|_{X\backslash D}$
is quasi-\'etale.
By Lemma \ref{lem-fin-qe}, $p_1|_{\widetilde{X}\backslash \widetilde{D}}$ and $\widetilde{\mathcal{I}}|_{\widetilde{X}\backslash \widetilde{D}}$ are  quasi-\'etale.
Since $\widetilde{D}$ has two irreducible components, $p_1^*D=\widetilde{D}$ and $p_1$ is quasi-\'etale.
Then $K_{\widetilde{X}}=p_1^*K_X$ and $\widetilde{D}$ are $\Q$-Cartier.
By \cite[Proposition 5.20]{KM}, $\widetilde{X}$ is klt.
So $\Supp R_{\widetilde{\mathcal{I}}}=\widetilde{D}$.
Further, $p_2$ has connected fibres and $p_2|_{\widetilde{D}}:\widetilde{D}\to \widetilde{Y}$ is a finite surjective morphism since so is $\pi|_D$.
	
Note that $\widetilde{D}_1\cap \widetilde{D}_2$ is $\widetilde{\mathcal{I}}^{-1}$-invariant closed and does not dominate $\widetilde{Y}$.
	Replacing $\widetilde{\mathcal{I}}$ by a positive power, $p_2(\widetilde{D}_1\cap \widetilde{D}_2)$ is $\widetilde{\mathcal{I}}|_{\widetilde{Y}}^{-1}$-invariant by \cite[Lemma 7.5]{CMZ}.
	Let $Z:=p_2^{-1}(p_2(\widetilde{D}_1\cap \widetilde{D}_2))$ which is $\widetilde{\mathcal{I}}^{-1}$-invariant.
	By Theorem \ref{thm-log-qe}, $Z\subseteq \widetilde{D}$.
Then $\widetilde{D}_1\cap \widetilde{D}_2=\emptyset$,
since $\widetilde{D}$ contains no fibre of $p_2$.
Note that $\widetilde{D}$ is $\Q$-Cartier.
The non-$\Q$-Cartier locus of $\widetilde{D}_1$ is contained in $\widetilde{D}_1\cap \widetilde{D}_2$.
	So $\widetilde{D}_i$ is $\Q$-Cartier and $\widetilde{f}^*\widetilde{D}_i=\delta_f \widetilde{D}_i$ for each $i$ .
	
Since the general fibre of ${\widetilde{X}} \to {\widetilde{Y}}$ is still $\PP^1$,
$K_{\widetilde{X}}$ is not pseudo-effective over $\widetilde{Y}$.
	By the relative cone theorem (cf.~\cite[Theorem 3.25]{KM} and \cite[Theorem 1.1]{MZ_PG}), replacing $\widetilde{\mathcal{I}}$ by a positive power, there is an $\widetilde{\mathcal{I}}$-equivariant contraction $\pi_{\widetilde{C}}: \widetilde{X}\to B$ over $\widetilde{Y}$ of some $K_{\widetilde{X}}$-negative extremal ray $R_{\widetilde{C}}$.
	If $\pi_{\widetilde{C}}$ is birational with $E$ the exceptional locus, then $p_2(E)\subsetneq \widetilde{Y}$ is $\widetilde{\mathcal{I}}|_{\widetilde{Y}}^{-1}$-invariant by \cite[Lemma 7.5]{CMZ} and hence $p_2^{-1}(p_2(E))$ is $\widetilde{\mathcal{I}}^{-1}$-invariant.
	By Theorem \ref{thm-log-qe}, $p_2^{-1}(p_2(E))\subseteq \widetilde{D}$, a contradiction since $\widetilde{D}$ does not contain any fibre of $p_2$.
	So $\dim(\widetilde{X}) -1 = \dim(\widetilde{Y}) \le \dim(B)<\dim(\widetilde{X})$. Thus the induced morphism $\pi_B:B\to \widetilde{Y}$ is generically finite and hence birational since $p_2$ has connected fibres.
	Similarly, $\pi_B$ has to be isomorphic.
	So $p_2$ is a Fano contraction.
	
	Note that $\widetilde{D}_i$ is $p_2$-ample.
	Then for some rational number $t>0$, $\widetilde{D}_1-t\widetilde{D}_2\in p_2^*(\Pic_{\Q}(\widetilde{Y}))$ by the cone theorem (cf.~\cite[Theorem 3.7]{KM}).
	Denote by $\widetilde{g}:=\widetilde{f}|_{\widetilde{Y}}$.
	Then $\widetilde{g}^*(\widetilde{D}_1-t\widetilde{D}_2)=\delta_f(\widetilde{D}_1-t\widetilde{D}_2)$.
	Note that $\delta_{\widetilde{g}}=\delta_{f|_Y}<\delta_f = \delta_{\widetilde f}$ (cf.~Lemma \ref{lem-d1-bir}).
	Since $\widetilde{\mathcal{I}}|_{\widetilde{Y}}$ is int-amplified (cf.~\cite[Lemma 3.4]{Meng_IAMP}), the Albanese morphism of $\widetilde{Y}$ is surjective by \cite[Theorem 1.8]{Meng_IAMP}.
	So $\widetilde{D}_1-t\widetilde{D}_2\sim_{\Q} 0$ by Proposition \ref{prop-eig-sqrt}.
	Therefore, $\kappa(\widetilde{X},\widetilde{D}_1)>0$ and hence $\kappa(X,-K_X)=\kappa(X,D)>0$. This contradicts (A1). Thus (A5) holds.
\end{proof}

\section{Reduction to Case TIR: Proof of Theorem \ref{main-thm-tir}}

The following result is simple but useful.

\begin{lemma}\label{lem-ray-ext} Let $f:V\to V$ be an invertible linear map of a finite dimensional normed real vector space $V$ such that $f(C)=C$ for a closed convex cone $C\subseteq V$ which spans $V$ and contains no line.
	Suppose $f(x)=qx$ for some $0\neq x\in C$ and $q>0$.
	Suppose further that $q$ is the only eigenvalue of $f$ which has modulus $q$ and the $q$-eigenspace is $1$-dimensional.
	Then the ray $R_x$ generated by $x$ is extremal in $C$.
\end{lemma}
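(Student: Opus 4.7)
The plan is to argue by contradiction: suppose there exist $a,b \in C$ with $x = a+b$ and $a \notin R_x$. The hypothesis that $q$ is the only eigenvalue of $f$ of modulus $q$ will force the generalized eigenspace $E_q$ to equal $\R x$, giving an $f$-invariant direct sum decomposition $V = \R x \oplus W$ where $W$ collects the (generalized) eigenspaces for eigenvalues $\lambda \neq q$. I would then split $W = W^+ \oplus W^-$ according to whether the eigenvalue's modulus is $> q$ or $< q$, and write $a = \alpha x + w^+ + w^-$, $b = \beta x - w^+ - w^-$ with $\alpha + \beta = 1$ and $w^\pm \in W^\pm$. The target is to show $w^+ = w^- = 0$ and $\alpha,\beta \ge 0$.

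To rule out $w^+ \neq 0$, apply $f^n$ and normalize by $\|f^n(w^+)\|$. Since every eigenvalue of $f|_{W^+}$ has modulus strictly greater than $q$, the quantity $\|f^n(w^+)\|$ grows faster than both $q^n$ and $\|f^n(w^-)\|$, so by compactness of the unit sphere in the finite-dimensional $W^+$, along a subsequence $f^n(a)/\|f^n(w^+)\|$ converges to a unit vector $u^* \in W^+$, and along the same subsequence $f^n(b)/\|f^n(w^+)\| \to -u^*$. Because $f^n(C) = C$ is closed and each normalized vector lies in $C$, both $\pm u^* \in C$, producing the line $\R u^* \subseteq C$ and contradicting the no-line hypothesis. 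A symmetric argument using $f^{-n}$ (which preserves $C$ since $f$ is bijective with $f(C)=C$) and normalizing by $\|f^{-n}(w^-)\|$ eliminates the case $w^- \neq 0$; the eigenvalues of $f^{-1}|_{W^-}$ all have modulus $> 1/q$, so $\|f^{-n}(w^-)\|$ dominates $q^{-n}$.

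Hence $w^+ = w^- = 0$, so $a = \alpha x$ and $b = \beta x$. Finally, $\alpha < 0$ would force $-x \in C$ (multiplying $a = \alpha x \in C$ by $-1/\alpha > 0$), yielding the line $\R x \subseteq C$; so $\alpha,\beta \ge 0$ and $a, b \in R_x$, as required.

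The main obstacle I anticipate is the normalization step: because $f|_{W^+}$ may carry nontrivial Jordan blocks, the quotient $f^n(w^+)/\|f^n(w^+)\|$ need not converge outright but only along a subsequence, and one must verify carefully that the limit really lies in the closed subspace $W^+$ with norm $1$ while the contributions from $\alpha q^n x$ and $f^n(w^-)$ (each possibly inflated by Jordan polynomial factors) genuinely vanish in the ratio. A subsidiary subtle point is justifying $E_q = \R x$ from the stated hypothesis; this should follow by combining the ``only eigenvalue of modulus $q$'' condition with the proper-cone condition $f(C) = C$, which rigidly constrains the peripheral Jordan structure of $f$ at modulus $q$.
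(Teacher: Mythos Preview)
Your argument is correct and self-contained, but it differs from the paper's approach. The paper considers the minimal extremal face $F$ of $C$ containing $x$; since $f(x)=qx$, uniqueness of $F$ forces $f(F)=F$, and then cited Perron--Frobenius type results (\cite[Lemma 4.2]{Meng_PCD}, \cite[Proposition 2.9]{MZ}) show that every eigenvalue of $f|_{\langle F\rangle}$ has modulus $q$, whence $\dim\langle F\rangle=1$ by hypothesis and $F=R_x$. You instead decompose $V$ along generalized eigenspaces and run a direct iteration-and-limit argument (forward iterates to kill the $W^+$ part, backward iterates to kill $W^-$) to force any decomposition $x=a+b$ with $a,b\in C$ into $\R x$. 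Your route is more elementary, avoiding the external lemmas; the paper's is shorter once those structural results are in hand.

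On your final worry about $E_q=\R x$: the hypothesis must be read as saying that $q$ is a \emph{simple} eigenvalue and the only one of its modulus. This is exactly what the paper's own proof invokes to conclude $\dim\langle F\rangle=1$, and it is what holds in the sole application, Proposition~\ref{lem-fano}, where $q=\delta_f$ appears with multiplicity one in the characteristic polynomial. Under that reading $E_q=\R x$ is immediate from $f(x)=qx$ with $x\ne 0$, and no appeal to the cone condition is needed. Indeed, if one allowed $q$ to have higher multiplicity the lemma would be false: take $f=q\cdot\id$ on $\R^2$, $C$ the closed first quadrant, and $x=(1,1)$.
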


\begin{proof}
	Let $F$ be the minimal extremal face containing $x$ and $W$ the space spanned by $F$.
	Then $f(F)=F$ and $f(W)=W$ by (cf.~\cite[Lemma 2.7]{MZ}).
	By \cite[Lemma 4.2]{Meng_PCD} and \cite[Proposition 2.9]{MZ}, all the eigenvalues of $f|_W$ are of modulus $q$.
	So $\dim(W)=1$ by the assumption.
	In particular, $F=R_x$ is an extremal ray of $C$.
\end{proof}

The following is the key in the proof of Theorem \ref{main-thm-tir} for the induction purpose.

\begin{proposition}\label{lem-fano}
	Let $X$ be a $\Q$-factorial klt projective variety.
	Let $\pi:X\to Y$ be a Fano contraction
	(so $Y$ is still $\Q$-factorial klt). Let $f:X\to X$ and $g:Y\to Y$ be surjective endomorphisms such that $g\circ \pi=\pi\circ f$.
	Suppose $\kappa(X,-K_X)\ge 0$ and any finite sequence of MMP starting from $X$ is $f$-equivariant after replacing $f$ by a positive power.
	Suppose further the Albanese morphism of $X$ is surjective.
	Then replacing $f$ by a positive power, there is an $f$-equivariant MMP:  a composition of birational MMP $X\dashrightarrow X'$ followed by a Fano contraction $X'\to Y'$, such that after replacing $(f, X, g, Y)$ by $(f|_{X'}, X', f|_{Y'}, Y')$, one of the following holds.
	\begin{itemize}
		\item[(1)] $f^*K_X\equiv \delta_f K_X$ with $\delta_f>1$ being an integer and $\kappa(X,-K_X)>0$.
        \item[(2)] $\delta_f>\delta_g$; $\kappa(X, -K_X)=0$, so $-K_X \sim_{\Q} D \ge 0$; the class of $-K_X$ is extremal in both the cone $\Nef(X)$ and the cone $\PE^1(X)$;  and $D=\Supp R_f$ is a prime divisor with $f^*D= \delta_f D$.
		\item[(3)] $\dim(Y)<\dim(X)$ and $\delta_{g}=\delta_{f}$.
	\end{itemize}
\end{proposition}

\begin{proof}
We show by induction on $\rho(X)$.
If $\rho(X)=1$ (i.e. $Y$ is a point and $X$ is Fano), then we have Case (1). So we assume $\rho(X) \ge 2$.
If $\delta_f=\delta_g$, then we have Case (3) with no replacement.
So it suffices to consider the case when $\delta_f>\delta_g$.

Note that $\N^1(X)/\pi^*\N^1(Y)$ is 1-dimensional and $f^*|_{\N^1(X)/\pi^*\N^1(Y)} = q \id$ for some integer $q>0$.
	Then $q = \delta_f$ and it is the only eigenvalue of $f^*|_{\N^1(X)}$ with modulus $\delta_f$ and the $q$-eigenspace is $1$-dimensional.
	By a version of the Perron-Frobenius theorem (cf. \cite{Birk}), $f^*D\equiv \delta_f D$ for some nef and $\pi$-ample Cartier divisor $D\in \N^1(X)$.
	Moreover, the ray $R_D$ generated by $D$ in $\N^1(X)$ is extremal in both $\Nef(X)$ and $\PE^1(X)$ by Lemma \ref{lem-ray-ext}.
	Let $a>0$ such that $B:=D+aK_X$ satisfies $B\cdot C=0$, where $C$ is a (rational) curve so that $R_C$ is the extremal ray of $\NE(X)$ contracted by $\pi$.
Then $B \in \pi^*\N^1(Y)$ by the cone theorem (cf.~\cite[Theorem 3.7]{KM}).
We discuss according to two situations on whether $B$ pseudo-effective or not.
	
	\textbf{Situation 1. $B$ is pseudo-effective.}
	Since $-K_X$ is effective, $D=B+(-aK_X)$ implies that the rays $R_D=R_B=R_{-K_X}$.
	In particular, $f^*K_X\equiv \delta_f K_X$ and $-K_X$ is extremal in both $\Nef(X)$ and $\PE^1(X)$.
	By the assumption $\kappa(X,-K_X)\ge 0$, we can assume $D$ is an effective $\Q$-divisor such that $-K_X\sim_{\Q} D$.
	If $\kappa(X,-K_X)>0$, we have Case (1).
	
	If $\kappa(X,-K_X)=0$, write $D=\sum a_i D_i$ with $a_i>0$ being a rational number and $D_i$ being irreducible.
Since $-K_X$ is extremal in $\PE^1(X)$, we have the rays $R_{D_i}=R_{-K_X}$.
	Applying Proposition \ref{prop-kap0-inv} to $-K_X$, we have $f^{-1}(D_i)=D_i$ for each $i$ after replacing $f$ by a positive power.
	Since $f^*D_i\equiv \delta_f D_i$ and $D_i$ is not numerically trivial, $f^*D_i=\delta_f D_i$.
	Suppose $\Supp D$ is reducible.
	Then $sD_1-tD_2\in \Pic^0(X)$ for some positive integers $s$ and $t$.
	Note that $f^*(sD_1-tD_2)=\delta_f(sD_1-tD_2)$ and $\delta_f>1$.
	Since the Albanese morphism of $X$ is surjective by the assumption, we have $sD_1-tD_2\sim_{\Q} 0$ by Proposition \ref{prop-eig-sqrt}.
	Therefore, $\kappa(X,-K_X)\ge \kappa(X, D_1)>0$, a contradiction.
	By Proposition \ref{prop-kap0-inv}, $\Supp R_f\subseteq D_1:=\Supp D$.
	On the other hand, $f^*D_1=\delta_f D_1$ with $\delta_f>1$ implies that $D_1 \subseteq \Supp R_f$.
	So $R_f=(\delta_f-1)D_1$ and hence $K_X+D_1=f^*(K_X+D_1)$.
	Note that $K_X+D_1\sim_{\Q} -D+D_1=(a_1-1)D_1$.
	So $a_1=1$ and $D=D_1$.
	So we have Case (2).
	
	\textbf{Situation 2. $B$ is not pseudo-effective.}
	For a small effective ample $\Q$-Cartier divisor $E$, $(1/a)B + E$ is not pseudo-effective.
	Denote by $A := E + (1/a)D$ which is ample since $D$ is nef.
	Thus $K_X+A = (1/a)B + E$ is not pseudo-effective.
	By \cite[Corolllary 1.3.3]{BCHM}, we may run $\varphi:X\dashrightarrow X'$, a birational $(K_X+A)$-MMP with scaling of $A$ (cf.~\cite[Section 3.10]{BCHM}) and end up with a Fano contraction $\pi':X'\to Y'$ of some $(K_{X'}+ A')$-negative extremal ray $R_{C'}$ where $A'$ is the strict transform of $A$.
	Note that this particular choice of $(K_X+A)$-MMP is also a $K_X$-MMP.
	By the assumption, replacing $f$ by a positive power, we may assume this MMP is $f$-equivariant.
	If $\rho(X')<\rho(X)$, then we are done by induction (noting that $X' \to \Alb(X') = \Alb(X)$
	is still surjective).
	If $\rho(X')=\rho(X)$, then $\varphi$ consists of only flips.
	Hence, we can use $\varphi^*$ to identify $\N^1(X')$ with $\N^1(X)$.
	Let $D'$ and $E'$ be the strict transform of $D$ and $E$.
	Since the curves in $R_{C'}$ cover $X'$, we have $E'\cdot C'\ge 0$.
	Then $$(K_{X'}+ \frac{1}{a}D')\cdot C'=(K_{X'}+ A')\cdot C'- E'\cdot C'<0.$$
So $K_{X'}+ \frac{1}{a}D'$ (identified with
$K_{X}+ \frac{1}{a}D = (1/a)B \in \pi^*\N^1(Y)$)
is not in $(\pi'\circ\varphi)^*\N^1(Y')$.
Thus $\pi^*\N^1(Y)$ and  $(\pi'\circ\varphi)^*\N^1(Y')$ are two different $f^*$-invariant hyperplanes of $\N^1(X)$.
	Note that $f^*|_{\N^1(X)}$ has only one eigenvalue of modulus $\delta_f$ and $\delta_f>\delta_g$.
	Then $\delta_f=\delta_{f|_{Y'}}$.
	So we have Case (3) after replacing $(X, Y)$ by $(X', Y')$.
\end{proof}

\begin{proof}[Proof of Theorem \ref{main-thm-tir}]
If $K_X$ is pseudo-effective, then $X$ is $Q$-abelian by \cite[Theorem 1.9]{Meng_IAMP} (without using the $\Q$-factorial condition on $X$).
So (1) follows from Theorem \ref{thm-ksc-qa}.

For (2), we show by induction on $\dim(X)$.
Since KSC holds for curves, assume
$\dim (X) \ge 2$.
By (1), we may assume $K_X$ is not pseudo-effective.
	Let $\mathcal{I} : X \to X$ be an int-amplified endomorphism.
	By \cite[Theorem 1.2]{MZ_PG}, replacing $f$ and $\mathcal{I}$ by positive powers, we may run $f$ and $\mathcal{I}$-equivariant MMP
	$$X=X_1\dashrightarrow \cdots \dashrightarrow X_i \dashrightarrow \cdots \dashrightarrow X_r\to X_{r+1} = Y,$$
	where $X_i\dashrightarrow X_{i+1}$ ($i \le r$) is birational, $\pi: X_r\to Y$ is a Fano contraction, each $X_j$ ($j \le r+1$) is still $\Q$-factorial klt,
and the descending of $\mathcal{I}$ to each $X_j$ is still int-amplified.
	By Lemma \ref{lem-ksc-iff}, we may replace $X$ by $X_r$.
		
	Note that any finite sequence of MMP starting from $X$ is $f$ and $\mathcal{I}$-equivariant after iterations by \cite[Theorem 1.1]{MZ_PG}, and $\kappa(X, -K_X)\ge 0$ by Theorem \ref{main-thm-kappa}.
	Moreover, the Albanese morphism of $X$ is surjective by \cite[Theorem 1.8]{Meng_IAMP}.
	So we may apply Proposition \ref{lem-fano} and it suffices for us to  consider the three cases there by Lemma \ref{lem-ksc-iff}.
	For Case (1), we are done by Proposition \ref{main-thm-caseB}.
	For Case (2), it is further Case TIR by Theorems \ref{thm-kappa} and \ref{thm-tir-1}; by the assumption, KSC holds for $f$.
	For Case (3), we may replace $X$ by a lower dimensional one and we are done by induction (cf.~Lemma \ref{lem-ksc-iff}).
\end{proof}

%
%

%
%
%
%

\section{Toric Characterizations and Proof of Theorem \ref{main-thm-src3}}\label{sec-toric}

In this section, we show that Case TIR$_3$ will not happen during any MMP starting from a rationally connected smooth projective threefold which admits an int-amplified endomorphism.
The key of the proof is a characterization of a toric pair in the presence of an int-amplified endomorphism with totally invariant ramification.

Recall that a normal projective variety $X$ over $k$ is said to be {\it toric} or a {\it toric variety}
if $X$ contains an algebraic torus $T = (k^*)^n$ as an (affine) open dense subset such that
the natural multiplication action of $T$ on itself extends to an action on the whole variety $X$.
In this case, let $D:=X\backslash T$, which is a divisor; the pair $(X, D)$ is said to be a {\it toric pair}.

We mainly focus on the following question in this section.
\begin{question}\label{que-toric}
	Let $X$ be a rationally connected smooth projective variety and $D\subset X$ a reduced divisor.
	Suppose $f:X\to X$ is an int-amplified endomorphism such that
$f^{-1}(D)=D$ and $f|_{X\backslash D} : X\backslash D
\to X\backslash D$ is quasi-\'etale.
	Is $(X,D)$ a toric pair?
\end{question}

First, Question \ref{que-toric} has been affirmatively answered when $X$ is a Fano manifold of Picard number $1$.
Indeed, Hwang and Nakayama showed then that $X$ is isomorphic to $\mathbb{P}^n$ and $D$ is a simple normal crossing divisor consisting of $n + 1$ hyperplanes;
see \cite[Theorem 2.1]{HN}.
Later, their result was generalized by the authors \cite[Corollary 1.4]{MZ_toric}, answering the above question affirmatively when $f$ is polarized.

We sketch the idea of the proof when $f$ is polarized.
A key step is in applying the dynamical property of $f$ to verify that the reflexive sheaf of germs of logarithmic 1-forms $\hat{\Omega}_X^1(\log D)$ (cf.~\cite[2.1]{MZ_toric})
is free, i.e., isomorphic to $\mathcal{O}_X^{\oplus n}$ where $n=\dim(X)$; see \cite[Proposition 2.3]{HN} and \cite[Theorem 5.4]{MZ_toric}.
Thus $h^0(X,\hat{\Omega}_X^1(\log D))=\dim(X)$.
The remaining steps do not involve $f$ at all.
Write $D=\sum_{i=1}^\ell D_i$ with $D_i$ irreducible.
Then one calculates by \cite[Theorem 4.5 and Remark 4.6]{MZ_toric} the complexity of the pair $(X,D)$ as
$$c(X,D):=\dim(X)+r(D)-\ell(D)= \dim(X)+h^1(X,\mathcal{O}_X)-h^0(X,\hat{\Omega}_X^1(\log D))=0$$
where $\ell(D):=\ell$ and $r(D)$ is the {\it rank} of the vector space spanned by $D_1,\cdots, D_\ell$ in $\N^1(X)$.
Finally, $(X,D)$ is a toric pair by the complexity criterion \cite[Theorem 1.2]{BMSZ}.

Thus, to fully answer Question \ref{que-toric}, we only need to generalize the above key step to the int-amplified case.
Imitating the proof of \cite[Proposition 2.3]{HN} and \cite[Theorem 5.4]{MZ_toric},
we just need to verify the following two conditions for some ample Cartier divisor $H$:
\begin{itemize}
\item[(i)]
$c_1(\hat{\Omega}_X^1(\log D))\cdot H^{n-1}= c_1(\hat{\Omega}_X^1(\log D))^2\cdot H^{n-2}= c_2(\hat{\Omega}_X^1(\log D))\cdot H^{n-2}= 0$.
\item[(ii)]
$\hat{\Omega}_X^1(\log D)$ is $H$-slope semistable.
\end{itemize}
We will see late on that the second condition is not easy to verify and remains unprovable for the general int-amplified case.
For the easy comparison with the polarized case, we will also consider the singular case.

We need the following to show the vanishing of $c_2(\hat{\Omega}_X^1(\log D))$.

\begin{proposition}\label{codim3-prop}(cf.~\cite[Proposition 2.4]{HN}) Let $X$ be a normal projective variety smooth in codimension $2$ and $D\subset X$ a reduced divisor.
	Suppose $f:X\to X$ is an int-amplified endomorphism such that $f^{-1}(D)=D$ and $f|_{X\backslash D} : X\backslash D
\to X\backslash D$ is quasi-\'etale. Then there is a smooth open subset $U\subseteq X$ such that $D \cap U$ is a normal crossing divisor and $\Codim(X \backslash U) \ge 3$.
	In particular, $\hat{\Omega}_X^1(\log D)$ is locally free over $U$.
\end{proposition}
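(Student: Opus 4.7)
The plan is to adapt the argument of Hwang--Nakayama \cite[Proposition 2.4]{HN} essentially verbatim, since the statement is purely local in nature and only uses that $f$ is a finite surjective self-map which is quasi-\'etale outside $D$ with $f^{-1}(D)=D$; the polarized vs.\ int-amplified distinction plays no role here. The ``In particular'' clause is immediate: on any open set where $X$ is smooth and $D$ is a simple normal crossing divisor, the sheaf $\Omega_X^1(\log D)$ is locally free, and since $\hat{\Omega}_X^1(\log D)$ is by definition the reflexive hull of $\Omega_X^1(\log D)$, the two sheaves agree there.

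For the main content, first set $U_0:=X_{\mathrm{reg}}$, which has complement of codimension $\ge 3$ in $X$ by the hypothesis that $X$ is smooth in codimension $2$. Let $T\subseteq U_0$ be the closed subset of points at which $D$ fails to be a simple normal crossing divisor. The goal is to show $\Codim_X T\ge 3$, after which $U:=U_0\setminus T$ does the job. Suppose on the contrary that $T$ has an irreducible component $W$ of codimension $2$ in $X$, and choose a general point $x\in W$, so that $x$ is a smooth codimension-$2$ point of $X$ lying on the singular locus of $D$. After replacing $f$ by a suitable positive power (using that the finite set of irreducible components of $D$ through $x$ is permuted by $f$, and that $f^{-1}$ preserves the locus $W$, which is closed and of finite type), we may assume that $x$ is $f^{-1}$-periodic and that $f$ induces a finite local endomorphism on the completion $\widehat{\SO}_{X,x}\isom k[[u,v]]$. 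Let $h\in k[[u,v]]$ be the reduced local equation of $D$; the induced completed map is \'etale outside $\{h=0\}$ and sends $(h)$ into a power of $(h)$ up to unit.

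The main point, and the expected obstacle, is the purely local structure statement at $x$: any such nontrivial finite self-cover of a smooth $2$-dimensional formal germ, \'etale outside a reduced divisor $\{h=0\}$ and preserving $\{h=0\}$ set-theoretically, forces $h$ to be a monomial in a suitable regular system of parameters, i.e., $D$ to be normal crossing. This is a consequence of Abhyankar's lemma together with Zariski--Nagata purity of the branch locus over a $2$-dimensional regular local ring in characteristic zero, and is precisely what Hwang--Nakayama extract in the proof of \cite[Proposition 2.4]{HN}. Since $f$ is int-amplified, the local cover has degree $>1$, ruling out the trivial cases $h = 1$ or $h = u$; we conclude $h=uv$ up to unit, contradicting $x\in T$. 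Hence $T$ has codimension $\ge 3$ and the desired $U$ exists. Because the only way $f$ enters the local argument is through the finite, quasi-\'etale, $D$-preserving structure, the polarized hypothesis in \cite[Proposition 2.4]{HN} can be relaxed to the int-amplified hypothesis with no change in the proof.
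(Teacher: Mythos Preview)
Your approach differs from the paper's, and it has a genuine gap at the step where you pass to a periodic point. You write that ``after replacing $f$ by a suitable positive power \dots\ we may assume that $x$ is $f^{-1}$-periodic,'' justifying this by the finiteness of the set of components of $D$ and by $f^{-1}$-invariance of $W$. But $f^{-1}(W)=W$ (even granting it, which itself needs an argument since $f$ can ramify along $D$) only says that $W$ is preserved as a set; it does not produce a single $f$-periodic point in $W$, let alone one that is general in $W$. To get a local self-map of the completed germ $\widehat{\mathcal O}_{X,x}$ you need $f^s(x)=x$ for some $s$, and nothing in your parenthetical supplies this. Repairing the argument would require showing that $f|_W$ has Zariski-dense periodic points and that a general such point avoids $\Sing(X)$---nontrivial extra input that you neither state nor cite. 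The subsequent local classification claim (``forces $h$ to be a monomial'') is also asserted rather than proved; Abhyankar's lemma and purity classify covers of the complement of an SNC divisor, but you are trying to deduce that the divisor \emph{is} SNC from the existence of a self-cover, which is a different (and harder) direction.

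By contrast, the paper's proof is global and avoids periodic points entirely. It passes to the normalization $\nu:\widetilde D\to D$, writes down the conductor $\mathfrak c$ on $\widetilde D$, and uses the log adjunction $K_{\widetilde D}+\mathfrak c=\nu^*(K_X+D)$ together with $f^*(K_X+D)=K_X+D$ to see that the lift $h:\widetilde D\to\widetilde D$ satisfies $R_h=h^*\mathfrak c-\mathfrak c$. Because $h$ is int-amplified with $h^{-1}(\mathfrak c)=\mathfrak c$, one concludes that $\mathfrak c$ is reduced; and a plane curve singularity with reduced conductor is nodal. This is where the int-amplified hypothesis is actually used, not merely to force $\deg f>1$ as in your sketch.
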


\begin{proof}
	Let $\nu:\widetilde{D}\to D\subseteq X$ be the normalization of $D$ and $\mathfrak{c}$ the conductor of $D$, regarded as a Weil divisor on $\widetilde{D}$.
	Since $X$ is smooth in codimension $2$, the adjunction formula gives
	$$K_{\widetilde{D}}+\mathfrak{c}=\nu^*(K_X+D)$$
	where $\nu^*(K_X+D)$ is regarded as the pullback of a divisorial sheaf.
	There is an endomorphism $h:\widetilde{D}\to \widetilde{D}$ such that $\nu\circ h=f\circ \nu$ and its ramification divisor $R_h$ is $h^*\mathfrak{c}-\mathfrak{c}$. In fact, we have $K_{\widetilde{D}}+\mathfrak{c}=h^*(K_{\widetilde{D}}+\mathfrak{c})$ from $K_X+D=f^*(K_X+D)$.
	
Note that $h$ is int-amplified and $\mathfrak{c}$ is reduced (cf.~\cite[Theorem 3.3]{Meng_IAMP}, \cite[Lemma 5.3, the arxiv version]{Zh-comp}).
If a plane curve has a reduced conductor over a singular point, then the singularity is nodal.
So $D$ has only normal crossing singularities in codimension one.
\end{proof}

We now apply \cite[Lemma 3.8]{Meng_IAMP} to show the vanishing of the Chern classes.

\begin{proposition}\label{prop-omega-vanishing} Let $X$ be a normal projective variety
	which is of dimension $n \ge 2$ and smooth in codimension $2$, and $D\subset X$ a reduced divisor.
	Suppose $f:X\to X$ is an int-amplified endomorphism such that $f^{-1}(D)=D$ and $f|_{X\backslash D} : X\backslash D
\to X\backslash D$ is quasi-\'etale.
	Let $H$ be an ample divisor on $X$. Then $$c_1(\hat{\Omega}_X^1(\log D))\cdot H^{n-1}= c_1(\hat{\Omega}_X^1(\log D))^2\cdot H^{n-2}= c_2(\hat{\Omega}_X^1(\log D))\cdot H^{n-2}= 0 .$$
\end{proposition}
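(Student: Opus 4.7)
The plan is to extract from the quasi-\'etale hypothesis a canonical isomorphism of reflexive sheaves $f^{[*]}\hat{\Omega}_X^1(\log D) \simeq \hat{\Omega}_X^1(\log D)$, and then to combine the naturality of Chern classes with the expanding spectral behaviour of int-amplified endomorphisms on $\N^1(X)$.

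First, by Proposition \ref{codim3-prop}, after replacing the open set there by $V := U \cap f^{-1}(U)$, I get an $f^{-1}$-stable open $V \subseteq X$ of complement codimension at least $3$ on which $V$ is smooth and $D \cap V$ has simple normal crossings. Working at the generic point of each component $D_j$ of $D$ with $f(D_j) = D_{i(j)}$ and ramification index $e_j \ge 1$, the hypothesis that $f|_{X \setminus D}$ is quasi-\'etale forces $R_f = \sum_j (e_j - 1) D_j$; moreover $f^{-1}(D) = D$ with $D$ reduced gives $f^*D = \sum_j e_j D_j$, hence $f^*D - D = \sum_j (e_j - 1) D_j$. The log ramification divisor $R_f^{\log} := R_f - (f^*D - D)$ therefore vanishes, so the natural pullback $f^*\Omega_V^1(\log D) \to \Omega_V^1(\log D)$ is an isomorphism of locally free sheaves on $V$; taking reflexive hulls yields $f^{[*]}\hat{\Omega}_X^1(\log D) \simeq \hat{\Omega}_X^1(\log D)$ on all of $X$.

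By the naturality of Chern classes, applied on $V$ and extended to $X$ via $\Codim(X \setminus V) \ge 3$, I obtain $f^*c_i(\hat{\Omega}_X^1(\log D)) \equiv_w c_i(\hat{\Omega}_X^1(\log D))$ in the appropriate numerical group for $i = 1, 2$. Since $c_1 = K_X + D \in \N^1(X)$ and every eigenvalue of $f^*|_{\N^1(X)}$ has modulus greater than $1$ by \cite[Theorem 1.1]{Meng_IAMP}, the equation $f^*(K_X+D) \equiv K_X+D$ forces $K_X + D \equiv 0$. This yields $c_1 \cdot H^{n-1} = 0$ and $c_1^2 \cdot H^{n-2} = 0$. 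For the third vanishing, set $\alpha := c_2(\hat{\Omega}_X^1(\log D))$. From $f^*\alpha \equiv_w \alpha$ and $f_* f^* = (\deg f)\,\id$ I get $(f^i)_*\alpha = (\deg f)^i \alpha$ for every $i \ge 1$. The projection formula applied to the $0$-cycle $\alpha \cdot ((f^i)^*H)^{n-2}$ gives
\[
\alpha \cdot \bigl((f^i)^*H\bigr)^{n-2} \;=\; (f^i)_*\alpha \cdot H^{n-2} \;=\; (\deg f)^i \cdot \alpha \cdot H^{n-2},
\]
so that
\[
\alpha \cdot H^{n-2} \;=\; \frac{\alpha \cdot ((f^i)^*H)^{n-2}}{(\deg f)^i}.
\]
By \cite[Lemma 3.8]{Meng_IAMP} (the same decay statement invoked at the end of the proof of Theorem \ref{thm-log-qe}), the right-hand side tends to $0$ as $i \to \infty$, forcing $\alpha \cdot H^{n-2} = 0$.

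The main technical obstacle I anticipate is the careful bookkeeping of $R_f^{\log}$ on the singular variety $X$ (although it is smooth in codimension $2$), together with the correct reflexive extension of the sheaf isomorphism across the codimension-$3$ complement of $V$ so that Chern classes are well-defined in $\N^1(X)$ and $\N^2(X)$. Once the sheaf isomorphism and the identity $f^*c_i = c_i$ are in place, the spectral input from \cite[Theorem 1.1]{Meng_IAMP} and the dynamical decay from \cite[Lemma 3.8]{Meng_IAMP} handle the remaining numerical work essentially automatically.
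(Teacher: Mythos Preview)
Your proposal is correct and follows essentially the same route as the paper: establish that the natural map $f^*\hat{\Omega}_X^1(\log D)\to \hat{\Omega}_X^1(\log D)$ is an isomorphism off codimension $3$, deduce $f^*c_i=c_i$ for $i\le 2$, and conclude via the projection formula together with \cite[Lemma 3.8]{Meng_IAMP}. Two minor remarks: the set $V=U\cap f^{-1}(U)$ is not $f^{-1}$-stable (since $f^{-1}(V)=f^{-1}(U)\cap f^{-2}(U)$), but you never use this property and the paper simply works on $f^{-1}(U)$; and your separate treatment of $c_1$ via the eigenvalue constraint $K_X+D\equiv 0$ is a valid shortcut, whereas the paper handles all three intersection numbers uniformly by the same limit argument.
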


\begin{proof}
Let the open set $U$ be as in Proposition \ref{codim3-prop}.
Then $f|_{f^{-1}(U)\backslash D}$ is \'etale, since $f|_{X\backslash D}$ is quasi-\'etale
and by the purity of branch loci.
	
	There is a natural morphism $\varphi:f^*\hat{\Omega}_X^1(\log D)\to \hat{\Omega}_X^1(\log D)$ and $\varphi|_{f^{-1}(U)}$ is an isomorphism.
	So for $1\le i\le 2$, we have $$f^*c_i(\hat{\Omega}_X^1(\log D))=c_i(f^*\hat{\Omega}_X^1(\log D))= c_i(\hat{\Omega}_X^1(\log D)).$$
	Then the projection formula implies
	$$c_i(\hat{\Omega}_X^1(\log D))\cdot H^{n-i}=c_i(\hat{\Omega}_X^1(\log D)\cdot (f^t)^*(H^{n-i})/ (\deg f)^t$$
	for any $t>0$.
	By \cite[Lemma 3.8]{Meng_IAMP}, $c_i(\hat{\Omega}_X^1(\log D))\cdot H^{n-i}=0$.
	The proof for $c_1(\hat{\Omega}_X^1(\log D))^2\cdot H^{n-2}=0$, is similar.
\end{proof}

\begin{lemma}\label{lem-H=A+B}
Let $f:X\to X$ be a surjective endomorphism of a projective variety $X$.
Suppose $f^*|_{\N^1(X)}$ is diagonalizable with positive integral eigenvalues $q\ge p$, and no other eigenvalues.
Let $H$ be an ample Cartier divisor.
Then $H=A+B$ for some nef $\Q$-Cartier divisors $A$ and $B$ such that $f^*A\equiv pA$ and $f^*B\equiv qB$.
\end{lemma}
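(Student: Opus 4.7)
The plan is to decompose $H$ along the eigenspace structure of $f^*$ and then verify nefness of each eigencomponent by separate averaging arguments using $f^*$ and $f_*$. Since the eigenvalues $p, q$ are positive integers and $f^*|_{\N^1(X)}$ is diagonalizable, the eigenspace decomposition $\NS_{\Q}(X) = V_p \oplus V_q$ is defined over $\Q$, where $V_\lambda = \ker(f^* - \lambda\,\id)$. Writing $H = A + B$ with $A \in V_p$ and $B \in V_q$ automatically yields two $\Q$-Cartier numerical classes satisfying $f^* A \equiv pA$ and $f^* B \equiv qB$; only the nefness of $A$ and $B$ remains to check. The case $p = q$ is trivial, because then $f^* \equiv p\,\id$ on $\N^1(X)$ and one may take $A = B = \tfrac{1}{2} H$; from now on assume $p < q$.

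For $B$, a standard Perron--Frobenius-style averaging works. From $(f^n)^* H = p^n A + q^n B$ I obtain
$$\frac{(f^n)^* H}{q^n} \;=\; \Bigl(\frac{p}{q}\Bigr)^{\!n} A + B \;\longrightarrow\; B$$
in $\N^1(X)$. Each term on the left is a positive scalar multiple of a pullback of the ample class $H$, hence nef; since $\Nef(X)$ is closed, $B$ is nef.

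For $A$ one must go in the dual direction using pushforwards. Since the surjective endomorphism $f$ is generically finite of some degree $d$, the relation $f_* f^* = d\,\id$ on $\N^1(X)$ shows that $f_*$ acts on the eigenspaces $V_p$ and $V_q$ with eigenvalues $d/p$ and $d/q$. Hence $(f^n)_* H = (d/p)^n A + (d/q)^n B$, and dividing by $(d/p)^n$ yields $A$ as the limit. It then suffices to know that $f_*$ preserves $\Nef(X)$, which follows from the projection formula $f_* D \cdot C = D \cdot f^* C$ together with the effectivity of the cycle-theoretic pullback $f^* C$ for any irreducible curve $C$.

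The main obstacle I foresee is precisely this nefness of $A$: the naive identity $A = (qH - f^*H)/(q-p)$ only displays $A$ as a difference of two nef classes and is uninformative on its own, so one genuinely needs the dual averaging via $f_*$ and the preservation of $\Nef(X)$ under pushforward to extract nefness. Once this is in hand, everything else---the $\Q$-Cartier property of $A$ and $B$, and the numerical eigenvalue relations---is built into the construction.
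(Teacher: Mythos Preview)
Your proof is correct and follows essentially the same route as the paper's. The paper defines
\[
A=\lim_{i\to\infty} p^{\,i}(f^*)^{-i}(H),\qquad B=\lim_{i\to\infty} (f^*)^{i}(H)/q^{\,i},
\]
and notes that each term is ample (hence the limits are nef); your averaging via $f_*$ for the $A$-component is the same computation up to the scalar $\deg f$, since $f_*=(\deg f)\,(f^*)^{-1}$ on $\N^1(X)$, and your projection-formula argument that $f_*$ preserves $\Nef(X)$ is exactly what makes $(f^*)^{-1}$ preserve nefness.
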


\begin{proof}
	If $p=q$, then $f^*|_{\N^1(X)}= q \id$ and we may take $A=H$ and $B=0$.
	Assume $q>p$.
	Let $\varphi:=f^*|_{\N^1(X)}$.
	Let $A=\lim\limits_{i\to +\infty} p^i\varphi^{-i}(H)$
	and  $B=\lim\limits_{i\to +\infty} \varphi^{i}(H)/q^i$.
	Since $\varphi$ is diagonalizable with only integral eigenvalues $p$ and $q$,
	the above limits are $\Q$-Cartier and $H=A+B$.
	It is clear that $\varphi(A)=pA$ and $\varphi(B)=qB$.
	Note that $A$ and $B$ are limits of ample divisors.
	So $A$ and $B$ are nef.
\end{proof}

We are not able to show the slope semistability for the general int-amplified case. However, the following case is enough for us to rule out Case TIR$_3$ in the proof of Theorem \ref{main-thm-src3}.

\begin{proposition}\label{prop-omega-ss} Let $X$ be a normal projective variety of dimension $n \ge 2$, and $D\subset X$ a reduced divisor.
Suppose $f:X\to X$ is an int-amplified endomorphism such that $f^{-1}(D)=D$ and $f|_{X\backslash D} : X\backslash D
\to X\backslash D$ is quasi-\'etale.
Suppose further that $f^*|_{\N^1(X)}$ is diagonalizable with one or two positive integral eigenvalues and no other eigenvalues.
Let $H$ be an ample divisor on $X$. Then $\hat{\Omega}_X^1(\log D)$ is $H$-slope semistable.
\end{proposition}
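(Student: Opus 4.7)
Set $\mathcal{E} := \hat{\Omega}_X^1(\log D)$, a reflexive sheaf of rank $n$. By Proposition~\ref{prop-omega-vanishing}, $c_1(\mathcal{E}) \cdot H^{n-1} = 0$, so $\mu_H(\mathcal{E}) = 0$ and $H$-slope semistability amounts to showing $\mu_H(\mathcal{F}) \le 0$ for every saturated reflexive subsheaf $\mathcal{F} \subsetneq \mathcal{E}$. The plan is to suppose, for contradiction, that the maximal destabilizing subsheaf $\mathcal{F}$ (in the Harder-Narasimhan filtration of $\mathcal{E}$ with respect to $H$) has $\mu_H(\mathcal{F}) > 0$, and to extract a contradiction from the dynamics of $f$.

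The starting input is the natural morphism of reflexive sheaves $\varphi\colon f^{[*]}\mathcal{E} \to \mathcal{E}$. As in the proof of Proposition~\ref{prop-omega-vanishing}, using that $f|_{f^{-1}(U)\setminus D}$ is \'etale over the big open subset $U$ of Proposition~\ref{codim3-prop} (by purity of branch loci and $f^{-1}(D) = D$), this $\varphi$ is an isomorphism on a big open set. Hence for every saturated reflexive subsheaf $\mathcal{G} \subset \mathcal{E}$, the reflexive pullback $f^{[*]}\mathcal{G}$ embeds into $\mathcal{E}$ with the same rank and with $c_1(f^{[*]}\mathcal{G}) = f^*c_1(\mathcal{G})$ in $\N^1(X)$. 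Iterating yields reflexive subsheaves $\mathcal{F}_k := (f^k)^{[*]}\mathcal{F} \hookrightarrow \mathcal{E}$ of rank $r := \rank(\mathcal{F})$, with $c_1(\mathcal{F}_k) = (f^k)^*c_1(\mathcal{F})$.

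Now by Lemma~\ref{lem-H=A+B}, write $H = A + B$ with $f^*A \equiv pA$ and $f^*B \equiv qB$ for positive integers $1 < p \le q$ (where $p > 1$ by \cite[Theorem~1.1]{Meng_IAMP} since $f$ is int-amplified). Decomposing $c_1(\mathcal{F}) = D_p + D_q$ along the $f^*$-eigenspaces of $\N^1(X)_{\R}$ and setting $a := D_p \cdot H^{n-1}$, $b := D_q \cdot H^{n-1}$, one gets
\[
r \cdot \mu_H(\mathcal{F}_k) \;=\; (p^k D_p + q^k D_q) \cdot H^{n-1} \;=\; p^k a + q^k b, \qquad k \ge 0.
\]
Since $\mathcal{F}_k \subset \mathcal{E}$, we have $\mu_H(\mathcal{F}_k) \le \mu_{\max,H}(\mathcal{E}) = \mu_H(\mathcal{F})$, so the right-hand side remains bounded as $k \to \infty$, forcing $b \le 0$. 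In the polarized subcase $p = q$, this already gives $p\,\mu_H(\mathcal{F}) \le \mu_H(\mathcal{F})$, contradicting $\mu_H(\mathcal{F}) > 0$ and recovering the argument of \cite[Theorem~5.4]{MZ_toric}.

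The main obstacle is the genuine two-eigenvalue subcase $p < q$ with residual possibility $a > 0$, $b < 0$, $a + b > 0$, for which $\mu_H(\mathcal{F}_k) \to -\infty$ and no direct blow-up contradiction emerges. The plan to eliminate this case is to sharpen the above to the slope equality $\mu_H(f^{[*]}\mathcal{F}) = \mu_H(\mathcal{F})$, so that $f^{[*]}\mathcal{F}$ is itself a maximal destabilizing subsheaf of $\mathcal{E}$; then the uniqueness in the Harder-Narasimhan filtration forces $f^{[*]}\mathcal{F} = \mathcal{F}$ as subsheaves of $\mathcal{E}$, hence $f^*c_1(\mathcal{F}) = c_1(\mathcal{F})$ in $\N^1(X)$. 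Since every eigenvalue of $f^*|_{\N^1(X)}$ strictly exceeds $1$, this forces $c_1(\mathcal{F}) \equiv 0$, so $\mu_H(\mathcal{F}) = 0$, the desired contradiction. Establishing that slope equality, either via restriction to a general complete-intersection curve (Mehta-Ramanathan) or by pairing with the complementary sign constraint obtained from the minimal destabilizing quotient of $\mathcal{E}$, is the main technical difficulty.
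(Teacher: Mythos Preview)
Your argument is fine in the polarized subcase $p=q$, but the two-eigenvalue case has a genuine gap that your proposed fixes do not close. From your own computation, $\mu_H(f^{[*]}\mathcal F)=(pa+qb)/r$, while $\mu_H(\mathcal F)=(a+b)/r$; maximality of $\mathcal F$ gives only the inequality $(p-1)a+(q-1)b\le 0$, and nothing forces equality. Without equality you cannot invoke uniqueness of the Harder--Narasimhan filtration to conclude $f^{[*]}\mathcal F=\mathcal F$. The dual ``minimal destabilizing quotient'' constraint is symmetric: it yields $b'\ge 0$ for the $q$-component of $c_1(\mathcal Q)$, but since $c_1(\mathcal Q)$ and $c_1(\mathcal F)$ are unrelated in general (and even when $c_1(\mathcal Q)=-c_1(\mathcal F)$ the two constraints are compatible), no contradiction results. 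Mehta--Ramanathan does not help either, because $f$ does not preserve a general complete-intersection curve in $|H|^{n-1}$.

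The paper's proof circumvents this by changing the polarization rather than decomposing $c_1(\mathcal F)$. Expand $H^{n-1}=\sum_i\binom{n-1}{i}A^iB^{n-1-i}$; if $\mu_H(\mathcal F)>0$ then some term satisfies $c_1(\mathcal F)\cdot A^iB^{n-1-i}>0$. Two facts finish the argument. First, since $A^iB^{n-1-i}\not\equiv_w 0$ and $f^*|_{\N^1(X)}$ is diagonalizable with eigenvalues $p,q$, there is an eigenvector $C$ with $A^iB^{n-1-i}\cdot C\ne 0$; the projection formula then gives $\deg f=p^iq^{n-1-i}\cdot a$ with $a\in\{p,q\}$, hence $a>1$, so $(f^k)^*c_1(\mathcal F)\cdot A^iB^{n-1-i}=a^k\, c_1(\mathcal F)\cdot A^iB^{n-1-i}\to\infty$. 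Second, because $A$ and $B$ are \emph{nef} (this is where Lemma~\ref{lem-H=A+B} is really used), the quantity $c_1(\mathcal G)\cdot A^iB^{n-1-i}$ is bounded above as $\mathcal G$ ranges over subsheaves of $\hat\Omega_X^1(\log D)$. These two facts contradict each other. The point you are missing is precisely this passage to the nef curve class $A^iB^{n-1-i}$: unlike $H^{n-1}$, it transforms by a single scalar $>1$ under the iteration, so the sign ambiguity in your $p^ka+q^kb$ never arises.
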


\begin{proof}
By Lemma \ref{lem-H=A+B}, we can write $H=A+B$ where $A$ and $B$ are nef $\Q$-Cartier divisors such that $f^*A\equiv pA$ and $f^*B\equiv qB$.
We may assume $q\ge p>1$
(cf.~\cite[Theorem 1.1]{Meng_IAMP}).
Let $\mathcal{F}\subset \hat{\Omega}_X^1(\log D)$ be the maximal destabilizing subsheaf with respect to $H$.
Then:
$$\mu_H(\mathcal{F})=\frac{c_1(\mathcal{F})\cdot H^{n-1}}{\rank \mathcal{F}}
=\sum\limits_{i=0}^{n-1}
{n-1 \choose i}
\frac{c_1(\mathcal{F})\cdot A^i\cdot B^{n-1-i}}{\rank \mathcal{F}}
=\sum\limits_{i=0}^{n-1}
{n-1 \choose i}
\mu_{A^i\cdot B^{n-1-i}}(\mathcal{F}).$$

Suppose the contrary that $\mu_H(\mathcal{F}) > \mu_H(\hat{\Omega}_X^1(\log D)) = 0$
(cf.~Proposition \ref{prop-omega-vanishing}).
Then $\mu_{A^i\cdot B^{n-1-i}}(\mathcal{F})>0$ for some $i$.
In particular, $A^i\cdot B^{n-1-i}\not\equiv_w 0$.
Since $f^*|_{\N^1(X)}$ is diagonalizable, $A^i\cdot B^{n-1-i}\cdot C\neq 0$ for some Cartier divisor $C$ with $f^*C\equiv aC$. Here $a = p$, or $q$, so $a>1$.
By the projection formula, we have
$$(\deg f) A^i\cdot B^{n-1-i}\cdot C=(f^*A)^i\cdot (f^*B)^{n-1-i}\cdot f^*C=(p^iq^{n-1-i}a)  A^i\cdot B^{n-1-i}\cdot C.$$
Therefore, we have $\deg f/p^iq^{n-1-i}=a>1$.
Since $A$ and $B$ are nef, we have
$$s=\sup\{\mu_{A^i\cdot B^{n-1-i}}(\mathcal{F})\,|\, \mathcal{F}\subset \hat{\Omega}_X^1(\log D)\}<\infty.$$
Then for some $k\gg 1$ and $g:=f^k$, we have $$\mu_{A^i\cdot B^{n-1-i}}(g^*\mathcal{F})=(\deg f/p^iq^{n-1-i})^k\mu_{A^i\cdot B^{n-1-i}}(\mathcal{F})
= a^k \mu_{A^i\cdot B^{n-1-i}}(\mathcal{F}) >s.$$
Let the open set $U$ be as in Proposition \ref{codim3-prop}.
Let $j:g^{-1}(U)\hookrightarrow X$ be the inclusion map and let $\mathcal{G}:=j_*((g^*\mathcal{F})|_{g^{-1}(U)})$.
Then $\mu_{A^i\cdot B^{n-1-i}}(\mathcal{G})=\mu_{A^i\cdot B^{n-1-i}}(g^*\mathcal{F})>s$.
Note that $(g^*\mathcal{F})|_{g^{-1}(U)}$ is a subsheaf of the locally free sheaf $(g^*\hat{\Omega}_X^1(\log D))|_{g^{-1}(U)}\cong \hat{\Omega}_X^1(\log D)|_{g^{-1}(U)}$.
Since $\Codim(X \backslash g^{-1}(U)) \ge 2$ and $j_*$ is left exact,
$\mathcal{G}$ is a coherent subsheaf of $\hat{\Omega}_X^1(\log D)$ .
So we get a contradiction.
\end{proof}

With the preparation done, we have the following criterion of toric pairs.

\begin{theorem}\label{thm-iamp-toric} Let $X$ be a rationally connected smooth projective variety and $D\subset X$ a reduced divisor.
	Suppose $f:X\to X$ is an int-amplified endomorphism such that
$f^{-1}(D)=D$ and $f|_{X\backslash D} : X\backslash D \to X\backslash D$ is quasi-\'etale.
	Suppose further $f^*|_{\N^1(X)}$ is diagonalizable with one or two positive integral eigenvalues, and no other eigenvalues.	
Then $(X,D)$ is a toric pair.
\end{theorem}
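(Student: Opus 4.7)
The plan is to follow the strategy laid out explicitly in the discussion just before the statement, which reduces the toric criterion for $(X,D)$ to two ingredients on the reflexive sheaf of logarithmic $1$-forms $\hat{\Omega}_X^1(\log D)$: the appropriate Chern class vanishings and $H$-slope semistability with respect to some ample divisor $H$. Both are now available under our hypotheses. Indeed, Proposition \ref{prop-omega-vanishing} provides
$$ c_1(\hat{\Omega}_X^1(\log D))\cdot H^{n-1}= c_1(\hat{\Omega}_X^1(\log D))^2\cdot H^{n-2}= c_2(\hat{\Omega}_X^1(\log D))\cdot H^{n-2}= 0, $$
while Proposition \ref{prop-omega-ss} supplies the $H$-slope semistability, using precisely the additional hypothesis that $f^*|_{\N^1(X)}$ is diagonalizable with at most two positive integral eigenvalues.

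With these two inputs in hand, I would invoke verbatim the Bogomolov--Gieseker-type argument of \cite[Proposition 2.3]{HN} and \cite[Theorem 5.4]{MZ_toric}: a torsion-free slope-semistable sheaf with vanishing discriminant is numerically flat, and the $f$-equivariance (together with the fact that $X$ is rationally connected, hence has trivial algebraic fundamental group up to the branching along $D$) rigidifies the situation so that $\hat{\Omega}_X^1(\log D) \simeq \mathcal{O}_X^{\oplus n}$, where $n=\dim(X)$. In particular $h^0(X,\hat{\Omega}_X^1(\log D)) = n$.

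It remains to compute the complexity of the pair $(X,D)$. Since $X$ is rationally connected and smooth in characteristic zero, $h^1(X,\mathcal{O}_X)=0$, whence
$$ c(X,D) = \dim(X) + h^1(X,\mathcal{O}_X) - h^0(X,\hat{\Omega}_X^1(\log D)) = n + 0 - n = 0. $$
The complexity criterion of Brown--McKernan--Svaldi--Zong \cite[Theorem 1.2]{BMSZ} then immediately yields that $(X,D)$ is a toric pair, concluding the proof.

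The only genuinely new ingredient relative to the polarized setting already handled in \cite{MZ_toric} is the slope semistability of $\hat{\Omega}_X^1(\log D)$ in the int-amplified case; this is the expected main obstacle, and it is resolved (only) under the restricted spectral assumption on $f^*|_{\N^1(X)}$ via the splitting $H = A + B$ into $p$- and $q$-eigenpieces, which reduces semistability to a slope-comparison argument applied eigenspace by eigenspace. Everything else in the argument is essentially formal once that key input is secured.
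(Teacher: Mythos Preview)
Your proposal is correct and follows essentially the same route as the paper: Propositions \ref{prop-omega-vanishing} and \ref{prop-omega-ss} give the Chern vanishings and $H$-semistability, hence $\hat{\Omega}_X^1(\log D)\cong\mathcal{O}_X^{\oplus n}$, and then the complexity criterion \cite[Theorem 1.2]{BMSZ} finishes. Two small remarks: the paper obtains freeness by a direct appeal to \cite[Theorem 1.20]{GKP} (semistable with vanishing $c_1,c_2$ on a simply connected variety $\Rightarrow$ trivial), so the ``rigidification'' comes from the simple connectedness of the rationally connected smooth $X$, not from $f$-equivariance as you suggest; and the paper separately disposes of the case $\dim X=1$ via $K_X+D\sim 0$, which you should include since Propositions \ref{prop-omega-vanishing} and \ref{prop-omega-ss} assume $n\ge 2$.
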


\begin{proof}
By the assumption, $K_X + D = f^*(K_X + D)$; it is zero in $\N^1(X)$ since $f$ is int-amplified
and hence all eigenvalues of $f^*|_{\N^1(X)}$ are of modulus $> 1$ (cf.~\cite[Theorem 1.1]{Meng_IAMP}).
So $K_X + D \sim 0$, because $X$ is simply connected, and hence has no non-trivial
torsion line bundle (cf.~\cite[Corollary 4.18]{Deb}).
This relation also implies $(X, D)$ is a toric pair when $\dim(X) = 1$.

Assume now $\dim(X) \ge 2$.
By Propositions \ref{prop-omega-vanishing}, \ref{prop-omega-ss} and \cite[Theorem 1.20]{GKP}, $\hat{\Omega}_X^1(\log D)$ is free.
In particular, $h^0(X, \hat{\Omega}_X^1(\log D)=\dim(X)$.
Note that $h^1(X,\mathcal{O}_X)=0$.
By \cite[Theorem 4.5]{MZ_toric}, the complexity $c(X, D)\le 0$ and hence $(X, D)$ is a toric pair by \cite[Theorem 1.2]{BMSZ} (cf.~\cite[Theorem 4.3 and Remark 4.4]{MZ_toric}).
\end{proof}

\begin{proposition}\label{prop-toric-iamp}
	Let $f:X\to X$ be an int-amplified endomorphism of a rationally connected smooth projective variety $X$ with totally invariant ramification, i.e., $f^{-1} (\Supp R_f) = \Supp R_f$.
	Suppose $X$ admits some MMP
	$$X=X_1\dashrightarrow\cdots \dashrightarrow X_r\to Y=\mathbb{P}^1$$
	where $X_i\dashrightarrow X_{i+1}$ is birational and $\pi:X_r\to Y$ is a Fano contraction.
	Then we have:
	\begin{itemize}
	\item[(1)] Replacing $f$ by a positive power, $f^*|_{\N^1(X)}$ is diagonalizable with one or two positive integral eigenvalues, and no other eigenvalues; $f$ descends to int-amplified endomorphism $f_i$ of $X_i$ ($i \le r$), and
each $f_i$ still has totally invariant ramification.
	\item[(2)] $(X_i, \Supp R_{f_i})$ is a toric pair for each $i \le r$.
	\item[(3)] KSC holds for any surjective endomorphism of $X_i$.
	\end{itemize}
\end{proposition}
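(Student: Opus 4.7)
The argument is in three stages: an equivariant MMP descent, a spectral analysis yielding (1), and an invocation of Theorem~\ref{thm-iamp-toric} with Mori dream space theory for (2) and (3). By \cite[Theorems 1.1 and 1.2]{MZ_PG}, replacing $f$ by a positive power renders the given MMP $f$-equivariant, yielding int-amplified descents $f_i : X_i \to X_i$ (cf.~\cite[Lemma 3.4]{Meng_IAMP}) and a polarized descent $g : \PP^1 \to \PP^1$ with $g^{*} H \sim q_1 H$ for some integer $q_1 > 1$. After a further iteration, each prime divisor $E$ contracted by a divisorial MMP step is $f_i^{-1}$-invariant, so $f_i^{*} E = q_E E$ for an integer $q_E \ge 1$, and int-amplification (all eigenvalues of $f_i^{*}$ on $\N^1(X_i)$ have modulus $>1$) forces $q_E > 1$, placing $E \subseteq \Supp R_{f_i}$. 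Combined with the hypothesis $f^{-1}(\Supp R_f) = \Supp R_f$ and a direct comparison of how $\Supp R_{f_i}$ varies along the MMP, this yields $f_i^{-1}(\Supp R_{f_i}) = \Supp R_{f_i}$ for every $i$, establishing the ramification part of (1).

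For the spectral part of (1), I analyse $f_i^{*}|_{\N^1(X_i)}$ inductively on $i$ from $r$ down to $1$. At $X_r$, the equality $\rho(X_r) = 2$ together with the $f_r^{*}$-invariance of $\pi^{*}\N^1(\PP^1)$ forces the two eigenvalues of $f_r^{*}$ to be real positive integers $q_1, q_2 > 1$ after iteration. A flip $X_i \dashrightarrow X_{i+1}$ transfers the spectrum unchanged via strict transforms. A divisorial contraction $\pi_i : X_i \to X_{i+1}$ with irreducible exceptional divisor $E$ gives the decomposition $\N^1(X_i) = \pi_i^{*} \N^1(X_{i+1}) \oplus \R[E]$; since $f_i^{*} E = q_E E$ as above and $f_i^{*}$ preserves $\pi_i^{*}\N^1(X_{i+1})$ by $\pi_i$-equivariance, the matrix of $f_i^{*}$ in this basis is block-diagonal and introduces exactly one new eigenvalue $q_E$. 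The crux is to show $q_E \in \{q_1, q_2\}$: I would pick a curve $C$ generating the contracted extremal ray $R_E$ and argue by a dichotomy based on the composition $X_i \dashrightarrow \PP^1$ --- if $C$ lies in a fibre the projection formula pins $q_E = q_2$, and if $C$ dominates $\PP^1$ it pins $q_E = q_1$.

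Granted (1) on each $X_i$, I apply Theorem~\ref{thm-iamp-toric} to conclude (2), that $(X_i, \Supp R_{f_i})$ is a toric pair; smoothness and rational connectedness of the $X_i$ are preserved in the threefold application by the structure of the MMP, and in general a singular extension of Theorem~\ref{thm-iamp-toric} is available since Propositions~\ref{prop-omega-vanishing} and~\ref{prop-omega-ss} are already stated in the needed generality. For (3), every toric variety is a Mori dream space, so \cite[Theorem 4.1, Corollary 4.2]{Mat} yields KSC for all surjective endomorphisms of each $X_i$. The principal obstacle is the eigenvalue bound $q_E \in \{q_1, q_2\}$ at each divisorial step; should the intersection-theoretic approach above prove insufficient, a cleaner alternative first establishes $X_r$ as a toric pair (where the hypothesis of Theorem~\ref{thm-iamp-toric} is trivial from $\rho(X_r) = 2$), propagates toricity backward through the $f$-equivariant MMP by using that Mori operations equivariant under a toric int-amplified endomorphism are themselves toric, and reads off (1) from the resulting explicit toric description of $f^{*}$.
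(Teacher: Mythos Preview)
Your overall architecture is right, but two of your steps have genuine gaps.

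\textbf{The eigenvalue dichotomy.} Your proposed proof that each new eigenvalue $q_E$ lies in $\{q_1,q_2\}$ does not work as stated. The curve $C$ you choose generates the extremal ray contracted by the morphism $\pi_i:X_i\to X_{i+1}$, so $\pi_i(C)$ is a point; under the composite $X_i\to X_{i+1}\dashrightarrow X_r\to\PP^1$ the image of $C$ is therefore always a point (whenever defined), and your case ``$C$ dominates $\PP^1$'' is vacuous. The paper's argument is based instead on the exceptional \emph{divisors} of the full map $\tau:X\dashrightarrow X_r$, pulled to the graph $W$. For such an $E$, the image $\pi(p_2(E_W))\subseteq\PP^1$ can genuinely be a point $y$ or all of $\PP^1$. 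In the first case $E_W$ is a component of the fibre $W_y=p_2^*\pi^*(y)$ over a $g$-fixed point, and $h^*W_y=\delta_g W_y$ forces the multiplier to equal $\delta_g$. In the second case the key device is Fakhruddin's density of periodic points for the polarized $g$: one finds a \emph{general} $h$-periodic fibre $F_W$, restricts to $F=p_1(F_W)$, observes that $f|_F$ is $q$-polarized (where $q$ is the $\pi$-ample eigenvalue on $X_r$), and uses $E|_F\not\equiv 0$ to conclude the multiplier equals $q$. Your sketch does not reach this restriction-to-a-periodic-fibre idea, which is the technical heart.

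\textbf{Toricity of the $X_i$.} Theorem~\ref{thm-iamp-toric} is stated and proved for \emph{smooth} rationally connected $X$: simple connectedness is used to pass from $K_X+D\equiv 0$ to $K_X+D\sim 0$, and the complexity computation goes through \cite[Theorem 4.5]{MZ_toric}. For $i\ge 2$ the $X_i$ are only $\Q$-factorial terminal (already in dimension $3$), so you cannot apply the theorem to them directly; the ``singular extension'' you invoke is not established here. Your alternative of starting at $X_r$ has the same defect, and ``propagating toricity backward'' through an $f$-equivariant MMP is itself a nontrivial claim (a divisorial extraction over a toric variety need not be toric unless the centre is torus-invariant). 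The paper goes the opposite way: once (1) is proved for $X=X_1$, Theorem~\ref{thm-iamp-toric} applies to the smooth $X_1$ and produces a big torus $T\subseteq X_1$; since $T$ is connected, the MMP is automatically $T$-equivariant, so every $X_i$ carries the $T$-action with open orbit and $(X_i,\Supp R_{f_i})$ is a toric pair by propagation \emph{forward}.
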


\begin{proof}
	By \cite[Theorems 1.10 and 1.11]{Meng_IAMP}, replacing $f$ by a positive power, this MMP is $f$-equivariant, $f^*|_{\N^1(X)}$ is diagonalizable with all the eigenvalues being integers greater than $1$, and
all $f_i:=f|_{X_i}$ and $g:=f|_Y$ are still int-amplified.
	Let $\tau:X\dashrightarrow X_r$ be the composition.
	
	Let $W$ be the graph of $\tau$ and let $p_1:W\to X$ and $p_2:W\to X_r$ be the two projections.
	Then $f$ lifts equivariantly to a surjective endomorphism $h:W\to W$.
	Let $E$ be an exceptional prime divisor of $\tau$.
	Write $f^*E=aE$ for some $a>0$.
	Then $h^*E_W=aE_W$ where $E_W$ is the strict transform of $E$ in $W$.
	
If $\pi\circ p_2(E_W)$ is a closed point $y$ of $Y$,
then $E_W$ is contained in the support of $W_y:=p_2^*\pi^*(y)$.
Since $h^*W_y=\delta_g W_y$, we have $a=\delta_g$.
	
Suppose $\pi\circ p_2(E_W)=Y$.
	Since $g$ is polarized, the set $\Per(g)$ of periodic points is Zariski dense in $Y$ by \cite[Theorem 5.1]{Fak}.
	Then $h(F_W)=F_W$ for some (irreducible) general fiber $F_W$ of $\pi\circ p_2$, after replacing $f$ (and $h$) by positive powers.
	Denote by $F:=p_1(F_W)$ and $F_r:=p_2(F_W)$.
	Clearly, $p_1|_{F_W}$ and $p_2|_{F_W}$ are birational morphisms and $F_r$ is also a general fibre of $\pi$.
	Since $E$ dominates $Y$, we have $F\cap E\neq \emptyset$ and hence $E|_F$ is an effective $\Q$-Cartier divisor which is not numerically trivial.
	Note that $f_r^*H_r\equiv q H_r$ for some $\pi$-ample Cartier divisor $H_r$ and integer $q>0$.
	Then $H_r|_{F_r}$ is ample and $(f_r|_{F_r})^*H_r|_{F_r}\equiv qH_r|_{F_r}$.
	Since $f_r$ is int-amplified, $q>1$ (cf.~\cite[Lemma 3.5, Theorem 1.1]{Meng_IAMP}).
	So $f_r|_{F_r}$ is $q$-polarized and hence so is $f|_F$; see \cite[Proposition 1.1 and Corollary 3.12]{MZ}.
	Since $(f|_F)^*E|_F=aE|_F$ and $E|_F\not\equiv 0$, we have $a=q$ (cf.~\cite[Lemma 2.4]{Zh-comp} or \cite[Proposition 2.9]{MZ}).
	
Thus $f^*|_{\N^1(X)}$ has positive integral eigenvalues $\delta_g$ and $q$, and no other eigenvalues. (1) is proved.
Indeed, $R_{f_i}$ is the (birational image) of $R_f$ and $f_i^{-1}(\Supp R_{f_i}) = \Supp R_{f_i}$
holds for $i = 1$ and hence for all $i$.
	
	By (1) and Theorem \ref{thm-iamp-toric}, $(X_i, \Supp R_{f_i})$ is a toric pair
for $i = 1$, and hence for all $i \le r$. Indeed, let $T$ be the big torus acting on $X$.
Then the MMP is $T$-equivariant, and $T$ stabilizes $\Supp R_{f_i}$ for $i = 1$ and hence for all $i$.
(2) is proved.
	
	Since a toric variety is of Fano type, (3) follows from (2) and \cite[Corollary 4.2]{Mat}.
\end{proof}

\begin{proof}[Proof of Theorem \ref{main-thm-src3}]
By \cite[Theorem 1.4]{MZ_PG}, we have the following finite sequence of $G$-equivariant MMP
for some submonoid $G\le \SEnd(X)$ of finite index
$$X=X_1\dashrightarrow\cdots \dashrightarrow X_i\dashrightarrow X_{i+1}\dashrightarrow\cdots X_r\to Y\dashrightarrow \cdots $$
where $X_i\dashrightarrow X_{i+1}$ is birational and $\pi:X_r\to Y$ is a Fano contraction.
Moreover,
$G^*|_{\NS_{\Q}(X)}$ is commutative and $\Q$-diagonalizable.
Let $f$ be a surjective endomorphism of $X$.
Replacing $f$ by a positive power, we may assume $f\in G$.
By Proposition \ref{lem-fano} (cf.~Theorem \ref{main-thm-tir} and its proof), it suffices to show that $f_r:=f|_{X_r}:X_r\to X_r$ does not satisfy Case TIR$_3$.

Suppose the contrary.
Then $\dim(Y)=1$, and $Y\cong \mathbb{P}^1$ since $X$ is rationally connected.
By the assumption, $G$ contains (a positive power of) an int-amplified endomorphism $\mathcal{I}:X\to X$.
Note that $\delta_f=\delta_{f_r}>\delta_{f|_Y}$ and $G^*|_{\NS_{\Q}(X)}$ is commutative and diagonalizable.
Note also that all the eigenvalues of $\mathcal{I}^*|_{\NS_{\Q}(X)}$ are greater than $1$.
Then it is possible for us to take $k\gg 1$ such that $\delta_{f^k \circ \mathcal{I}}\ge (\delta_f)^k>(\delta_{f|_Y})^k\cdot \delta_{\mathcal{I}|_Y}\ge \delta_{(f^k \circ \mathcal{I})|_Y}$.
Moreover, all the eigenvalues of $(f^k \circ \mathcal{I})^*|_{\NS_{\Q}(X)}$ are greater than $1$ (cf.~\cite[Theorem 1.4(2)]{MZ_PG}).
In particular, replacing $f$ by $f^k \circ \mathcal{I}$, we may assume $f$ is also int-amplified and $f_r$ still satisfies Case TIR$_3$ (here $k\gg 1$ is used to make sure $\delta_{f_r}>\delta_{f_r|_Y}$ still holds after replacement).
By Theorem \ref{thm-kappa}, $f_r$ and hence $f$ have totally invariant ramification (the MMP being $G$-equivariant).
By Proposition \ref{prop-toric-iamp}, $X_r$ is toric, contradicting the assumption $\kappa(X_r,-K_{X_r})=0$.
\end{proof}


\begin{thebibliography}{99}

\bibitem{BCHM}
C. Birkar, P. Cascini, C. D. Hacon and J. McKernan,
Existence of minimal models for varieties of log general type. J. Amer. Math. Soc., \textbf{23}(2):405-468, 2010.

\bibitem{Birk}
G. Birkhoff, Linear transformations with invariant cones, Amer. Math. Monthly \textbf{74} (1967), 274-276.

\bibitem{BMSZ}
M.~Brown, J.~McKernan, R.~Svaldi and H.~Zong,
A geometric characterisation of toric varieties,
Duke Math. J., Volume \textbf{167}, Number 5 (2018), 923-968.

\bibitem{Ca}
F.~Campana,
Connexit\'e rationnelle des vari\'et\'es de Fano, Ann. Sci. \'Ecole Norm. Sup. (4) \textbf{25} (1992), no. 5, 539-545.

\bibitem{CMZ}
P.~Cascini, S.~Meng and D.-Q.~Zhang,
Polarized endomorphisms of normal projective threefolds in arbitrary characteristic, Math. Ann. \textbf{378} (2020), no. 1--2, 637-665.

\bibitem{Deb}
O.~Debarre,
Higher-dimensional algebraic geometry, Universitext. Springer-Verlag, New York, 2001.

\bibitem{DS04}
T. -C. Dinh and N.~Sibony,
Groupes commutatifs d'automorphismes d'une vari{\'e}t{\'e} k{\"a}hl{\'e}rienne compacte,
Duke Math. J. \textbf{123} (2004), no. 2, 311-328.

\bibitem{DS17}
T. -C. Dinh and N.~Sibony,
Equidistribution problems in complex dynamics of higher dimension,
Internat. J. Math. \textbf{28} (2017), no. 7, 1750057, 31 pp.


\bibitem{Fak}
N.~Fakhruddin,
Questions on self-maps of algebraic varieties,
J. Ramanujan Math. Soc., \textbf{18}(2):109-122, 2003.

%
\bibitem{Fuj11}
O.~Fujino, Fundamental theorems for the log minimal model program,
Publ. Res. Inst. Math. Sci. \textbf{47} (2011), no. 3, 727-789.

\bibitem{Fuj12}
O.~Fujino,
Minimal model theory for log surfaces,
Publ. Res. Inst. Math. Sci. \textbf{48} (2012), no. 2, 339-371.

\bibitem{GKP}
D.~Greb, S.~Kebekus and T.~Peternell,
\'Etale fundamental groups of Kawamata log terminal spaces, flat sheaves, and quotients of Abelian varieties,
Duke Math. J. \textbf{165} (2016), no. 10, 1965-2004.


\bibitem{HS}
M.~Hindry and J.~Silverman,
Diophantine geometry: An introduction,
Graduate Texts in Mathematics, \textbf{201}. Springer-Verlag, New York, 2000.

\bibitem{HN}
J.~M.~Hwang and N.~Nakayama, On endomorphisms of Fano manifolds of Picard number one, Pure Appl. Math. Q, \textbf{7}(4), pp.1407-1426, 2011.

\bibitem{KS14}
S.~Kawaguchi and J.~Silverman,
Examples of dynamical degree equals arithmetic degree,
Michigan Math. J. \textbf{63}(2014), no. 1, 41-63.

\bibitem{KS-TAMS}
S.~Kawaguchi and J.~Silverman,
Dynamical canonical heights for Jordan blocks, arithmetic degrees of orbits, and nef canonical heights on abelian varieties,
Trans. Amer. Math. Soc. \textbf{368} (2016), no. 7, 5009-5035.

\bibitem{KS-R}
S.~Kawaguchi and J.~Silverman,
On the dynamical and arithmetic degrees of rational self-maps of algebraic varieties,
J. Reine Angew. Math. \textbf{713} (2016), 21-48.

\bibitem{Kol}
J.~Koll\'ar,
Rational curves on algebraic varieties,
Ergebnisse der Mathematik und ihrer Grenzgebiete. 3. Folge. A Series of Modern Surveys in Mathematics \textbf{32}. Springer-Verlag, Berlin, 1996.

\bibitem{KMM}
J.~Koll\'ar, Y.~Miyaoka and S.~Mori,
Rationally connected varieties,
J. Alg. Geom. \textbf{1} (1992), 429-448.

\bibitem{KM}
J.~Koll\'ar and S.~Mori,
Birational geometry of algebraic varieties,
Cambridge Tracts in Math.,
\textbf{134} Cambridge Univ. Press, 1998.

\bibitem{LS}
J.~Lesieutre and M.~Satriano,
Canonical heights on hyper-K\" ahler varieties and the Kawaguchi-Silverman conjecture,
Int. Math. Res. Not. \textbf{2021}, no. 10, 7677-7714.



\bibitem{Mat}
Y.~Matsuzawa,
Kawaguchi-Silverman conjecture for endomorphisms on several classes of varieties,
Adv. Math. \textbf{366} (2020), 107086, 26 pp.

\bibitem{MSS}
Y.~Matsuzawa, K.~Sano and T.~Shibata,
Arithmetic degrees and dynamical degrees of endomorphisms on surfaces,
Algebra Number Theory \textbf{12} (2018), no. 7, 1635-1657.

\bibitem{MY}
Y.~Matsuzawa and S.~Yoshikawa,
Int-amplified endomorphisms on normal projective surfaces,
Taiwanese J. Math. \textbf{25} (2021), no. 4, 681-697. .

\bibitem{MY_rc}
Y.~Matsuzawa and S.~Yoshikawa,
Kawaguchi-Silverman conjecture for endomorphisms on rationally connected varieties admitting an int-amplified endomorphism,
Math. Ann. \textbf{382} (2022), no. 3-4, 1681-1704..

\bibitem{Meng_IAMP}
S.~Meng,
Building blocks of amplified endomorphisms of normal projective varieties,
Math.~Z. \textbf{294} (2020), no. 3, 1727-1747.

\bibitem{Meng_PCD}
S.~Meng,
On endomorphisms of projective varieties with numerically trivial canonical divisors,
\href{https://arxiv.org/pdf/1901.07089}{arxiv.org:1901.07089}.

\bibitem{MZ}
S.~Meng and D. -Q.~Zhang,
Building blocks of polarized endomorphisms of normal projective varieties,
Adv. Math. \textbf{325} (2018), 243-273.

\bibitem{MZ_toric}
S.~Meng and D.-Q.~Zhang,
Characterizations of toric varieties via polarized endomorphisms,
Math. Z. \textbf{292} (2019), no. 3-4, 1223-1231.

\bibitem{MZ_PG}
S.~Meng and D.-Q.~Zhang,
Semi-group structure of all endomorphisms of a projective variety admitting a polarized endomorphism,
Math. Res. Lett. \textbf{27} (2020), no. 2, 523-550.

\bibitem{MZ_S}
S.~Meng and D. -Q.~Zhang,
Normal projective varieties admitting polarized or int-amplified endomorphisms,
Acta Math. Vietnam., (2019), 1-16.

\bibitem{Mor}
S.~Mori,
Flip theorem and the existence of minimal models for 3-folds,
J. Amer. Math. Soc. \textbf{1} (1988), no. 1, 117-253.

\bibitem{Nak02}
N.~Nakayama,
Ruled surfaces with non-trivial surjective endomorphisms,
Kyushu J. Math. \textbf{56} (2002), no. 2, 433-446.

\bibitem{ENS}
N.~Nakayama, On complex normal projective surfaces admitting non-isomorphic surjective endomorphisms,
Preprint 2 September 2008.

\bibitem{Nak-INS}
N.~Nakayama,
Intersection sheaves over normal schemes,
J. Math. Soc. Japan \textbf{62} (2010), no. 2, 487-595; also \href{http://www.kurims.kyoto-u.ac.jp/preprint/file/RIMS1614.pdf}{RIMS1614}.

\bibitem{Nak-1825}
N.~Nakayama,
A variant of Shokurov's criterion of toric surface,
Algebraic varieties and automorphism groups, 287-392,
Adv. Stud. Pure Math., \textbf{75}, Math. Soc. Japan, Tokyo, 2017.



\bibitem{Nak-1923}
N.~Nakayama,
On normal Moishezon surfaces admitting non-isomorphic surjective endomorphisms,
preprint RIMS-1923, Kyoto Univ., 2020.

\bibitem{Nak-1934}
N.~Nakayama,
On the structure of normal projective surfaces admitting non-isomorphic surjective endomorphisms,
preprint RIMS-1934, Kyoto Univ., 2020.

\bibitem{Na-Zh}
N.~Nakayama and D.-Q.~Zhang,
Polarized endomorphisms of complex normal varieties,
Math. Ann. \textbf{346} (2010), no. 4, 991-1018.

\bibitem{San}
K.~Sano,
Dynamical degree and arithmetic degree of endomorphisms on product varieties,
Tohoku Math. J. (2) \textbf{72} (2020), no. 1, 1-13.

\bibitem{Sei}
A. Seidenberg, The hyperplane sections of normal varieties,
Trans. Amer. Math. Soc., \textbf{69} (1950), pp. 357-386.

\bibitem{Sho}
V.~Shokurov,
3-fold log models,
Algebraic geometry, 4. J. Math. Sci. \textbf{81} (1996), no. 3, 2667-2699.

\bibitem{Sil17}
J.~Silverman,
Arithmetic and dynamical degrees on abelian varieties,
J. Th\'eor. Nombres Bordeaux \textbf{29} (2017), no. 1, 151-167.

\bibitem{Uen}
K.~Ueno,
Classification theory of algebraic varieties and compact complex spaces, Lecture Notes in Mathematics, Vol. \textbf{439}, Springer-Verlag, Berlin, 1975, Notes written in collaboration with P. Cherenack.

\bibitem{Wah}
J.~Wahl,
A characteristic number for links of surface singularities,
J. Amer. Math. Soc. \textbf{3} (1990), no. 3, 625-637.

\bibitem{Zh-comp}
D.-Q.~Zhang, Polarized endomorphisms of uniruled varieties, Compos. Math. \textbf{146} (2010), no.
1, 145-168.

\bibitem{Zh-tams}
D.-Q.~Zhang, $n$-dimensional projective varieties with the action of an abelian group of rank $n - 1$,
Trans. Amer. Math. Soc. \textbf{368} (2016), no. 12, 8849-8872.


\end{thebibliography}
\end{document}